\DeclareMathOperator{\image}{im}
\newtheorem{theorem}{Theorem}
\newtheorem{definition}{Definition}
\newtheorem{lemma}{Lemma}
\newtheorem{corollary}{Corollary}
\newcommand{\Ar}{\mathbb{R}}
\newcommand{\ep}{\epsilon}
\newcommand{\term}[1]{\textbf{#1}}
\newcommand{\twist}{J}
\newcommand{\GF}{\mathcal{F}}
\newcommand{\C}{\mathbb{C}}
\newcommand{\Hep}{H^{+}}
\newcommand{\Hem}{H^{-}}
\newcommand{\Hei}{H^{\infty}}
\newcommand{\Cep}{C^{+}}
\newcommand{\Cem}{C^{-}}
\newcommand{\Cei}{C^{\infty}}
\newcommand{\HHep}{H_{+}}
\newcommand{\HHem}{H_{-}}
\newcommand{\HHei}{H_{\infty}}
\newcommand{\Crease}{Cre}
\newcommand{\partiale}{\partial_\twist}
\begin{document}

\title{Geometric Homology}
\author{Max Lipyanskiy}
\date{}

\address{Simons Center for Geometry and Physics, Stony Brook University, Stony Brook, NY 11794}
\email{mlipyan@math.columbia.edu}
\maketitle

\section{Introduction}
The purpose of this paper is to introduce a version of singular homology based on smooth mappings of manifolds with corners.  Although variants of such a theory  exists in the literature (see \cite{Zinger}, \cite{Kahn}, \cite{Keck}), we felt that certain points were not adequately addressed.  In particular, our goal is to construct a chain level theory based on smooth mappings of manifolds with corners.  In addition, we will avoid using the fact that smooth manifolds with can be triangulated.  As we shall see, transversality and intersections play a major role in setting up this theory.  From a pedagogical viewpoint, having intersection theory arguments available from the start facilitates simple and intuitive computations. \\\\
The motivation for our construction comes from several sources.  There has been a renewed interest in defining invariants based on geometric chains mapping to configuration space.  Such chains appear as solutions of various nonlinear partial differential equations in Gromov-Witten theory as well as gauge theory.  In addition, geometric chains also appear in the work of Sullivan and Chas on String topology \cite{Sullivan}.  The geometric construction of various $\infty$-structures is closely related to a chain level construction of intersection theory. \\\\
Ultimately, however, we are interested in the infinite dimensional variant of this theory.  In \cite{Lipyan}, we introduced a new approach to Floer theory based on a notion of a semi-infinite cycle.  This approach provides an alternative (but equivalent) approach to the Morse theory construction of Floer homology.  The fundamental objects in this theory are semi-infinite chains which are mappings of Hilbert manifolds $$\sigma: P\rightarrow B$$ meeting certain topological axioms.  The theory we discuss in this paper is a finite dimensional analogue of this geometric construction.  As it turns out, many arguments in Floer theory (for instance, the proof of the Morse Homology theorem) are nearly identical to their finite dimensional counterparts once certain topological assumptions are satisfied.  For these reasons, we thought it would be useful to expose our approach to geometric homology in the finite dimensional case to help motivate the analogous arguments in Floer theory.\\\\

\textbf{Acknowledgement.}  We wish to thank Tom Mrowka and Dennis Sullivan for useful conversations.  In addition, we would like to thank the Simons Center For Geometry and Physics for their hospitality while this work was being completed.

\section{Geometric Preliminaries}
\subsection{Manifolds With Corners}
Here we review the concept of a manifold with corners and prove various geometric lemmas that are used in the paper.  
\begin{definition}
A map $\Ar^k\times [0,1)^j\rightarrow \Ar^l\times [0,1)^k$ is \textbf{smooth} if locally it is a restriction of a smooth map on $\Ar^{k+j}\rightarrow \Ar^{l+k}$. 
\end{definition}
\begin{definition}
Let $P$ be a Hausdorff, 2nd countable topological space.  $P$ is a \textbf{smooth manifold with corners} if it is equipped with an open cover $\{ U_i \}$ together with  homeomorphisms $$\phi_i: \Ar^k\times [0,1)^{n-k}\rightarrow U_i$$
We demand that $\phi_j^{-1}\circ \phi_i$ are smooth where defined. The collection $(U_i,\phi_i)$ is an \textbf{atlas} for $P$.  As usual,  picking a maximal compatible atlas specifies a smooth structure on $P$.  
\end{definition}
\noindent $P$ has a natural structure of a stratified space. Indeed, Let $P^i$ be the set of points $p$ with a local chart of the form $\Ar^{n-i}\times [0,1)^{i}$ where $p=\{0 \}$.  We have $$P=P^0\cup P^1\cup P^2\cup P^3 \dots$$  One may check, using the inverse function theorem, that definition of strata is independent of the choice of chart. Note that $P^i$ is a manifold without corners of dimension $n-i$.  We will assume that, for a given $P$,  each connected component of the top stratum has the same dimension $n$.  Let $M$ be a smooth manifold without boundary and $f:P\rightarrow M$ be a smooth map.
\begin{definition}
 A map $f:P\rightarrow M$ is \textbf{transverse} to a submanifold $Y\subset M$ if its transverse on each open stratum $P^i$ of $P$.
\end{definition}
\begin{lemma}
Consider a  manifold $M$ with closed submanifold $Y$.  Given a smooth map $$\sigma:P \rightarrow M$$ such that $\sigma$ is transverse to $Y$, $\sigma^{-1}(Y)$ is a manifold with corners.
\end{lemma}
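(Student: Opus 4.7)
My plan is to work locally in a chart around a point $p\in\sigma^{-1}(Y)$ and reduce the statement to a transversality claim about an ordinary submanifold of Euclidean space meeting coordinate hyperplanes. Fix $p\in P^i$ and a chart $\phi:\Ar^{n-i}\times[0,1)^i\to U\subset P$ with $\phi(0)=p$. By the definition of smoothness on manifolds with corners, after shrinking $U$ there is a smooth extension $\tilde\sigma:V\to M$ of $\sigma\circ\phi$ on an open neighborhood $V\subset\Ar^n$ of $0$. Transversality of $\sigma|_{P^i}$ at $p$ gives $d\tilde\sigma_0(\Ar^{n-i}\times\{0\})+T_{\sigma(p)}Y=T_{\sigma(p)}M$, so $\tilde\sigma$ itself is transverse to $Y$ at $p$, and after shrinking $V$ we may assume $Z:=\tilde\sigma^{-1}(Y)\subset V$ is an ordinary smooth submanifold of codimension $k:=\mathrm{codim}_M Y$.

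The key step is to show that $Z$ is transverse in $V$ to every intersection $F_S:=\bigcap_{j\in S}\{t_j=0\}$ for $S\subseteq\{1,\dots,i\}$. Fix $q\in Z$ with $t_j(q)=0$ exactly when $j\in S$. Then $q$ lies in the open stratum of $P$ whose interior coincides with $F_S\setminus\bigcup_{j\notin S}\{t_j=0\}$, and by hypothesis $\sigma$ restricted to this stratum is transverse to $Y$, i.e.\ $d\tilde\sigma_q(T_qF_S)+T_{\sigma(q)}Y=T_{\sigma(q)}M$. Given any $u\in T_qV$, set $w=d\tilde\sigma_q(u)$ and pick $v\in T_qF_S$ with $d\tilde\sigma_q(v)-w\in T_{\sigma(q)}Y$; then $u-v\in T_qZ$ and $u=v+(u-v)\in T_qF_S+T_qZ$.

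It remains to check that a submanifold $Z\subset V$ transverse to all such $F_S$ intersects $\Ar^{n-i}\times[0,1)^i$ in a manifold with corners. Near $q\in Z$ with $t_j(q)=0$ iff $j\in S$, the transversality of $Z$ to $F_S$ and its sub-intersections implies that $\{t_j|_Z\}_{j\in S}$ have linearly independent differentials at $q$, so we may complete them to a smooth chart $(t_{j_1},\dots,t_{j_{|S|}},y_1,\dots,y_{d-|S|})$ on $Z$ near $q$, where $d=\dim Z=n-k$. In these coordinates $Z\cap(\Ar^{n-i}\times[0,1)^i)$ is locally cut out by $t_j\ge 0$ for $j\in S$, yielding a chart of the required form $\Ar^{d-|S|}\times[0,\ep)^{|S|}$. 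Transporting these charts back through $\phi$ and verifying that the transition maps between charts coming from different points $p$ are smooth (which follows because they restrict smooth transition maps between ambient submanifolds of Euclidean space) gives the manifold-with-corners structure on $\sigma^{-1}(Y)$.

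The main obstacle is the middle step: one must leverage the stratum-by-stratum hypothesis to produce transversality of the single ambient submanifold $Z$ to every coordinate face of the chart domain simultaneously. The rest is standard coordinate straightening via the implicit function theorem, and the stratification of $\sigma^{-1}(Y)$ by $\{\sigma^{-1}(Y)\cap P^i\}$ follows automatically from the construction.
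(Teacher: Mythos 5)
Your argument is correct, but it takes a genuinely different route from the paper's. The paper works entirely inside the corner chart: writing $Y=\Ar^k\times 0$ and letting $\pi$ be the projection onto the complementary factors, it notes that $(v,t)\mapsto(\pi(\sigma(v,t)),t)$ is a submersion by transversality along the stratum through the center point, and applies the implicit function theorem to change coordinates on the domain so that $\pi\circ\sigma$ becomes independent of the corner variables $t$; the preimage is then the literal product $\phi^{-1}(0)\times[0,1)^i$, which is the desired chart. You instead extend $\sigma$ across the corner to an open set (using the definition of corner-smoothness), form the honest preimage $Z=\tilde\sigma^{-1}(Y)$ there, and prove $Z$ is transverse to every coordinate face, which lets you take the boundary-defining functions $t_j|_Z$ as the corner coordinates. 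Your route makes explicit where the hypothesis on each stratum is used and shows the ``neatness'' of $\sigma^{-1}(Y)$ (its codimension-$c$ corner points sit in the codimension-$c$ corners of $P$), at the cost of more bookkeeping: you must treat all vanishing patterns $S$, and the smoothness of transitions deserves one more line, since different local extensions produce different ambient submanifolds $Z$ (this is handled by choosing the completing coordinates $y_m$ to be restrictions of ambient smooth functions, so each transition is a composition of a corner-smooth parametrization with smooth ambient functions). The paper's straightening buys a single product chart with no case analysis. One small imprecision in your write-up: a point $q\in Z$ with $t_j(q)=0$ exactly for $j\in S$ need not lie in $P$ at all, since some $t_{j'}$ with $j'\notin S$ may be negative, so the stratumwise hypothesis is only available at points of the corner domain; this is harmless, because those are the only points at which you need the charts.
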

\begin{proof}
Locally, we may write $\sigma$ as $$\sigma:\Ar^l \times [0,1)^i\rightarrow \Ar^n$$ with $Y=\Ar^k\times 0 \subset \Ar^n$.  Let $p\in \Ar^l$.  We will produce a chart for $\sigma^{-1}(Y)$  near $p\times 0$.  Let $\pi:\Ar^n\rightarrow \Ar^{n-k}$ be the projection to the last $n-k$ factors.  The map $$f: \Ar^l \times [0,1)^i\rightarrow \Ar^{n-k}\times [0,1)^i$$ given by $f(v,t)=(\pi(\sigma(v,t)),t)$ is a surjection in view of our transversality assumption.  The implicit function theorem assures that we can pick coordinates $U\times [0,1)^i$ for around $p$ such that $\pi(\sigma(u,t))=\phi(u)$ for some smooth function $\phi$.  Therefore, another application of the implicit function theorem yields that $\phi^{-1}(0)\times [0,1)^i$ is a chart for $\sigma^{-1}(Y)$.        
\end{proof}

\subsection{Boundary Operator}
Our goal is to define a geometric boundary operator on manifolds with corners. For $[0,1)^k$ let $$\partial_i([0,1)^k)= [0,1)^{k-1}$$ with the  natural face inclusion $$s_i:\partial_i([0,1)^k)\rightarrow [0,1)^k$$ that omits the $i$th factor.  For $[0,1)^k$, let $$\partial [0,1)^k=\bigsqcup_i \partial_i [0,1)^k$$

Given a $P$, we construct $\partial P$ as follows.  We replace the chart $\Ar^{n-k} \times [0,1)^k$ by $\Ar^{n-k} \times \partial [0,1)^k$.  The transition functions for $P$ induce transition functions for $\partial P$.  Note that locally, we get one copy of $P^1$, two copies of $P^2$, etc.

\subsection{Orientations}

\begin{definition}
$P$ is said to be \term{orientable} if $P^0$ is orientable.  An \term{orientation} of $P$ is an orientation of $P^0$.
\end{definition}

The following definition is important for the construction of homology groups:
\begin{definition}
Given a manifold with corners $P$, a mapping $\sigma:P\rightarrow M$ is said to be \textbf{trivial}, if there exists an orientation reversing diffeomorphism $f:P\rightarrow P$ with $\sigma\circ f= \sigma$.
\end{definition}

\begin{lemma}
An orientation of $P$ induces an orientation of $\partial P$.
\end{lemma}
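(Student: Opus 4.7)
By the definition just given, an orientation of $\partial P$ is the same data as an orientation of the top stratum $(\partial P)^0$. The construction of $\partial P$ shows that $(\partial P)^0$ is canonically identified with $P^1$, since only the faces $\Ar^{n-1}\times\{0\}$ contribute at codimension one and each such face appears with multiplicity one. So the task reduces to orienting the smooth manifold without boundary $P^1$ from the given orientation of $P^0$. My plan is to do this locally using the standard outward-normal convention and then check that the recipe is independent of the chart.

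For the local construction, fix a chart $\phi_i:\Ar^{n-1}\times[0,1)\to U_i$ around a point $p\in P^1$. The open set $\Ar^{n-1}\times(0,1)$ lies in $P^0$, so it inherits an orientation from the given orientation of $P^0$; after possibly composing $\phi_i$ with a reflection in an $\Ar^{n-1}$-coordinate I may assume the standard ordered basis $(e_1,\dots,e_{n-1},\partial_t)$ is positive there. I then declare the ordered basis $(e_1,\dots,e_{n-1})$ of $T_p(\partial P)$ to be positively oriented precisely when $(-\partial_t,e_1,\dots,e_{n-1})$ is positively oriented in $T_pP^0$, where $-\partial_t$ is the outward normal. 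This assigns an orientation to the piece of $P^1$ contained in $U_i$.

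To show the recipe is chart-independent, let $\phi_j$ be a second chart around $p$ and set $\psi=\phi_j^{-1}\circ\phi_i$, a smooth map (in the sense of the first definition of the paper) between neighborhoods of the origin in $\Ar^{n-1}\times[0,1)$. Since $\psi$ sends the boundary stratum to itself and the half-space to the half-space, its last coordinate $\psi_n$ satisfies $\psi_n(x,0)=0$ and $\psi_n\ge 0$, forcing $\partial\psi_n/\partial x_\alpha|_{t=0}=0$ for $\alpha<n$ and $\partial\psi_n/\partial t|_{t=0}>0$. Consequently the Jacobian of $\psi$ at a boundary point is block lower-triangular with a strictly positive bottom-right entry, and the sign of $\det D\psi$ equals the sign of the Jacobian of $\psi|_{t=0}$. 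Because the charts are compatibly oriented on $P^0$, the left-hand sign is positive, so $\psi|_{t=0}$ is orientation-preserving for the induced orientation on the face. This is exactly the cocycle condition needed for the local orientations to patch into a global orientation of $(\partial P)^0$.

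The only real content is the linear-algebraic statement about the boundary Jacobian, which is where the outward-normal direction gets rigidly distinguished by the half-space structure; I expect this to be the main obstacle, and once established the rest is bookkeeping. The higher strata of $\partial P$, where two or more copies of $P^i$ appear, play no role in the definition of orientability and therefore require no additional argument.
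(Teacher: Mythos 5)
Your proposal is correct and follows essentially the same route as the paper: identify $(\partial P)^0$ with $P^1$ and orient it by the outward-normal-first convention inherited from $P^0$. The paper simply cites this standard fact, whereas you verify it in full via the block-triangular structure of the boundary Jacobian of the transition maps; this is sound and complete.
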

\begin{proof}
Since $(\partial P)^0=P^1$, this is a consequence of the fact that an orientation of $P^0$ induces an orientation on $P^1$ using the ``outward normal first'' convention.   
\end{proof}
Since diffeomorphism of $P$ induces one of $\partial P$, we have:
\begin{lemma}
If $\sigma: P\rightarrow M$ is trivial, so is $\partial \sigma:\partial P\rightarrow M$.
\end{lemma}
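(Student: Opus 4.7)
The plan is to take the orientation-reversing diffeomorphism $f:P\rightarrow P$ provided by the triviality of $\sigma$ and produce an induced diffeomorphism $\partial f:\partial P\rightarrow \partial P$, then verify both that $\partial f$ reverses orientation on $\partial P$ and that $\partial \sigma \circ \partial f = \partial \sigma$.

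First I would argue that $f$ preserves the stratification of $P$. Since $f$ and $f^{-1}$ are both smooth, the inverse function theorem applied in local charts $\Ar^{n-k}\times [0,1)^k$ forces $f$ to send the stratum $P^k$ to itself for every $k$. Near a corner of codimension $k$, the diffeomorphism $f$ is locally a smooth self-map of $\Ar^{n-k}\times[0,1)^k$ which permutes the $k$ face factors; consequently it induces a smooth bijection of $\Ar^{n-k}\times \partial [0,1)^k$ to itself, sending the sheet $s_i$ to the sheet corresponding to the image of the $i$th face factor. Gluing these local boundary maps via the boundary transition functions inherited from $P$ yields a well-defined diffeomorphism $\partial f:\partial P\rightarrow \partial P$.

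Next I would check that $\partial f$ is orientation reversing. By the orientation definition, this amounts to verifying that $\partial f$ reverses orientation on the top stratum $(\partial P)^0 = P^1$. In local coordinates $(x,t)\in \Ar^{n-1}\times [0,1)$ near a point of $P^1$, write $f(x,t)=(h(x,t),k(x,t))$ with $k(x,0)=0$ and $\partial_t k(x,0)>0$ (since $f$ sends boundary to boundary and interior to interior). At $t=0$ the Jacobian is block triangular, so $\det df = \det(\partial_x h)\cdot \partial_t k$, hence $\det df$ and $\det d(f|_\partial)=\det(\partial_x h)$ have the same sign. Since the boundary orientation is determined by the outward-normal-first convention from the previous lemma, and $f$ carries outward normals to outward normals (preserving the sign of $\partial_t k$), the fact that $f$ reverses the orientation of $P^0$ forces $\partial f$ to reverse the orientation of $P^1$.

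Finally, $\partial\sigma\circ\partial f = \partial\sigma$ follows directly from the construction: $\partial\sigma$ is defined as $\sigma$ composed with the canonical projection $\partial P\rightarrow P$ arising from the face inclusions $s_i$, and this projection intertwines $\partial f$ with $f$ by construction, so the identity $\sigma\circ f = \sigma$ immediately descends to $\partial\sigma\circ \partial f=\partial\sigma$. The most delicate step is the second paragraph, namely the coordinate verification that $\partial f$ remains orientation reversing; the bookkeeping at higher corners, where $\partial P$ splits into multiple sheets that $f$ may permute, is the main place where one must be careful, but since orientation of $\partial P$ is tested only on the top stratum $P^1$, the corner permutations do not affect the orientation computation.
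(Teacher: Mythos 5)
Your argument is correct and is exactly the route the paper takes: the paper disposes of this lemma with the single remark that a diffeomorphism of $P$ induces one of $\partial P$, and your proposal simply fills in the details of that remark (strata preservation, the induced map $\partial f$, the block-triangular Jacobian computation showing $\partial f$ reverses the orientation of $P^1$ under the outward-normal-first convention, and the descent of $\sigma\circ f=\sigma$). Nothing further is needed.
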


The following lemma will imply that $\partial^2=0$ in the homology theory we define:
\begin{lemma}
 Given any $\sigma$, we have $\partial^2\sigma$ trivial.
\end{lemma}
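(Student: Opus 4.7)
The plan is to exhibit an explicit orientation-reversing involution $f : \partial^2 P \to \partial^2 P$ satisfying $(\partial^2 \sigma) \circ f = \partial^2 \sigma$. In a chart $\Ar^{n-k} \times [0,1)^k$ for $P$, the double boundary $\partial^2 P$ decomposes into sheets indexed by ordered pairs $(i,j)$ of distinct elements of $\{1,\ldots,k\}$: the sheet $(i,j)$ is the copy of $\Ar^{n-k} \times [0,1)^{k-2}$ obtained by first passing to the face $\{t_i = 0\}$ of $P$ and then to its face $\{t_j = 0\}$. I define $f$ locally by sending the $(i,j)$-sheet to the $(j,i)$-sheet via the identity on the underlying coordinates.

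First I would verify that $f$ is globally well-defined. The transition functions of $P$ induce transition functions of $\partial P$ via the face inclusions $s_i$, and iterating produces transition functions of $\partial^2 P$ that respect the ordered-pair labeling; the local swap is compatible with these, so the local definitions glue to a self-diffeomorphism of $\partial^2 P$. Equivariance $\sigma \circ f = \sigma$ is immediate, since the $(i,j)$ and $(j,i)$ sheets both parametrize the same codimension-two corner $\{t_i = t_j = 0\}$ of the original chart.

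The substance of the proof is to check that $f$ reverses orientations. Since $(\partial^2 P)^0$ is the disjoint union of the codimension-two strata, it suffices to compare, for each unordered pair $\{i,j\}$, the orientations the two orderings induce on the corner $\{t_i = t_j = 0\}$. I would reduce to the two-dimensional local model $[0,1)^2$ with coordinates $(t_i, t_j)$ oriented by $dt_i \wedge dt_j$. The outward-normal-first convention first assigns the orientation $-dt_j$ to the face $\{t_i = 0\}$ and then $+1$ to the corner, while performing the two steps in the opposite order assigns $+dt_i$ to $\{t_j = 0\}$ and then $-1$ to the corner. Hence the identity swap between the two sheets is orientation-reversing on the top stratum, and $\partial^2 \sigma$ is trivial. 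The main obstacle is precisely this orientation bookkeeping, which is the finite-dimensional shadow of the classical $\partial^2 = 0$ identity; the rest of the argument is local and combinatorial.
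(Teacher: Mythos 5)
Your proof is correct and follows essentially the same route as the paper: the paper's proof also identifies $\partial^2[0,1)^k$ with the disjoint union over ordered pairs $i\neq j$ and takes the involution swapping the $i<j$ and $i>j$ sheets. You simply make explicit the orientation computation in the model $[0,1)^2$ and the compatibility with transition functions, which the paper leaves implicit.
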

\begin{proof} Locally, $\partial^2 (V\times [0,1)^k)=V\times \partial^2[0,1)^k$.  Now, $$\partial^2[0,1)^k=\coprod_{i\neq j} \partial_i\circ \partial_j [0,1)^k$$  Thus, $\partial^2[0,1)^k$ is naturally a disjoint union of two spaces corresponding to when $i<j$ and $i>j$. This provides the orientation reversing involution of $\partial^2 \sigma$.
\end{proof}

We now recall the notion of orientable map between finite dimensional manifolds.  For the general discussion, see \cite{DK}.  Given a smooth map $$\sigma: P \rightarrow M$$ of finite dimensional manifolds, recall the construction of determinant bundle $$det(\sigma)\rightarrow P$$ Intuitively, the line over a point $p\in P$ corresponds to the space $$\Lambda^{max}Ker D\sigma_p \otimes \Lambda^{max} (Coker D\sigma_p)^*$$  Since in general the dimension of $kerD\sigma $ may jump, to construct a locally trivial bundle one proceeds as follows. Locally, one may pick a trivialized bundle $\underline{\Ar^n}  \rightarrow P$ and a map of bundles $$L: \underline{\Ar^n}  \rightarrow \sigma^*(TM)$$  such that the map $D\sigma \oplus L: TP \oplus \underline{\Ar^n}  \rightarrow \sigma^*(TM)$ is surjective.  One then defines $det(\sigma)$ as $$\Lambda^{max} (Ker(D\sigma \oplus L))\otimes (\Lambda ^{n}\underline{\Ar^n})^*$$  One can check that does not depend on the local choices and defines a line bundle over $P$.  A useful fact in dealing with orientations is that any exact sequence:
$$0\rightarrow V_0 \rightarrow V_1 \rightarrow \cdots  V_n \rightarrow 0$$
gives rise to a canonical isomorphism $$\otimes_{even} \Lambda^{max} V_i \cong \otimes_{odd} \Lambda^{max}V_i$$

\begin{definition}
An orientation of a map $\sigma$ is an orientation of $\det(\sigma)$.  
\end{definition}
We will need several basic results about oriented maps.  Given transverse maps $\sigma:P\rightarrow N$ and $f:M\rightarrow N$, we may form the pullback $$f^*(\sigma):f^{-1}(P)\rightarrow M$$
This is defined as the subset of $M\times P$ such that $(m,p) \in f^{-1}(P)$ if $f(m)=\sigma(p)$.

\begin{lemma}
We have $f^*(det(\sigma))\cong det(f^*(\sigma))$
\end{lemma}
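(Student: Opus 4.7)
The plan is to reduce both sides to expressions in the top exterior powers of tangent bundles and then identify them using the transversality exact sequence for the fiber product.

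I would first show that the auxiliary trivialization drops out of the stabilized definition. Fixing $L:\underline{\Ar^n}\to \sigma^*TM$ with $D\sigma\oplus L$ surjective on $P$, the short exact sequence
\[
0\to \ker(D\sigma\oplus L)\to TP\oplus \underline{\Ar^n}\xrightarrow{D\sigma\oplus L}\sigma^*TN\to 0,
\]
combined with the canonical isomorphism for exact sequences recalled in the excerpt, gives
\[
\det(\sigma)\;\cong\;\Lambda^{\max}TP\otimes (\sigma^*\Lambda^{\max}TN)^*,
\]
a formula manifestly independent of $L$. The same argument applied to the pullback map yields
\[
\det(f^*\sigma)\;\cong\;\Lambda^{\max}Tf^{-1}(P)\otimes ((f^*\sigma)^*\Lambda^{\max}TM)^*.
\]

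Let $\tilde f\colon f^{-1}(P)\to P$ be the projection to $P$, with respect to which $f^*(\det \sigma)$ is formed. Pulling back the first formula and using $\tilde f^*\sigma^*TN=(f^*\sigma)^*f^*TN$ produces
\[
f^*(\det \sigma)\;\cong\;\tilde f^*\Lambda^{\max}TP\otimes ((f^*\sigma)^*f^*\Lambda^{\max}TN)^*.
\]
The key ingredient is the transversality short exact sequence of vector bundles over $f^{-1}(P)$,
\[
0\to Tf^{-1}(P)\to (f^*\sigma)^*TM\oplus \tilde f^*TP\xrightarrow{(f^*\sigma)^*df-\tilde f^*d\sigma}(f^*\sigma)^*f^*TN\to 0,
\]
whose exactness at the middle and right terms is the transversality hypothesis and whose leftmost term is, by definition, the tangent bundle of the fiber product. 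Another application of the canonical exact-sequence isomorphism gives
\[
(f^*\sigma)^*\Lambda^{\max}TM\otimes \tilde f^*\Lambda^{\max}TP\;\cong\;\Lambda^{\max}Tf^{-1}(P)\otimes (f^*\sigma)^*f^*\Lambda^{\max}TN,
\]
which after rearranging tensor factors is precisely the identification $\det(f^*\sigma)\cong f^*(\det \sigma)$.

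The one thing requiring care is that all identifications are genuine isomorphisms of line bundles (smooth in the base point) and that the final isomorphism does not depend on the choice of stabilization $L$. Both are automatic here: every step uses the canonical isomorphism attached to a short exact sequence of smooth vector bundles, and the independence of $\det(\sigma)$ from the stabilization, already invoked in the excerpt, takes care of the rest. I do not expect any substantial analytic or geometric obstacle; the work is purely bookkeeping of tensor factors and signs.
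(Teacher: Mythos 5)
Your argument is correct, but it is not the route the paper takes for this lemma. The paper identifies the kernel and cokernel of $f^*(\sigma)$ pointwise with those of $\sigma$: the kernel of $D(f^*\sigma)$ at $(m,p)$ consists of pairs $(0,w)$ with $D\sigma(w)=0$, hence equals $\ker D\sigma$, and transversality gives an isomorphism $TM/\mathrm{im}\, D(f^*\sigma)\to TN/\mathrm{im}\, D\sigma$ identifying the cokernels; the stabilized construction then turns these identifications into the isomorphism of determinant lines. You instead first collapse the stabilized definition to the global formula $\det(\sigma)\cong \Lambda^{\max}TP\otimes(\sigma^*\Lambda^{\max}TN)^*$ (note the small slip: in this lemma the target of $\sigma$ is $N$, so the stabilization should map to $\sigma^*TN$, not $\sigma^*TM$), and then invoke the fiber-product exact sequence
\[
0\to Tf^{-1}(P)\to (f^*\sigma)^*TM\oplus \tilde f^*TP\to (f^*\sigma)^*f^*TN\to 0,
\]
which is in fact the sequence the paper uses in the \emph{next} lemma (orientability of $f^{-1}(P)$ when $f$ and $P$ are oriented). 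Your version has the virtue of working entirely with honest vector bundles, so you never have to worry about the jumping dimensions of $\ker D\sigma$ and $\mathrm{coker}\, D\sigma$; the price is that the reduction $\det(\sigma)\cong\Lambda^{\max}TP\otimes(\sigma^*\Lambda^{\max}TN)^*$ is special to finite dimensions, whereas the paper's ker/coker identification is exactly the argument that survives in the Fredholm, semi-infinite setting that motivates the paper. Also, exactness at the right-hand term is the transversality hypothesis, while exactness at the left and middle is just the description of the tangent space of the fiber product (stratum by stratum, since $P$ has corners); with those points stated, your bookkeeping goes through.
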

\begin{proof}
The tangent space to $f^{-1}(P)$ is given by $(v,w)\in TM\times TP$ with $Df(v)=D\sigma(w)$.  The kernel of the map $f^{*}\sigma:f^{-1}(P)\rightarrow M$ is given by points $(v,w)$ with $v=0$ and $D\sigma(w)=0$.  Thus, $ker(f^{-1}\sigma)$ is naturally identified with $ker(\sigma)$.  In view of the isomorphism $$TM/im(f^{-1}\sigma)\rightarrow TN/im(\sigma)$$ the cokernels are also naturally identified.  
\end{proof}
This implies that an orientation of the map $\sigma$ induces one on $f^*(\sigma)$.
\begin{lemma}
\label{Lem8}
 If $f$ is oriented and $P$ is oriented, so is $f^{-1}(P)$.
\end{lemma}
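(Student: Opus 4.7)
The plan is to exploit the symmetry of the fiber product to reduce to the previous lemma. The space $f^{-1}(P)=\{(m,p)\in M\times P : f(m)=\sigma(p)\}$ is symmetric in $f$ and $\sigma$, so swapping their roles realizes the same manifold as $\sigma^{-1}(M)\to P$, with the relevant projection becoming $\pi_P : f^{-1}(P)\to P$. In this reversed form, $\pi_P$ is the pullback of $f$ by $\sigma$. Applying the previous lemma gives
$$\det(\pi_P)\cong \sigma^*\det(f),$$
so the trivialization of $\det(f)$ provided by the orientation of $f$ induces a trivialization of $\det(\pi_P)$; that is, $\pi_P$ is itself an oriented map.

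Next I would pass to the top stratum, where $(f^{-1}(P))^0$ is an ordinary manifold without boundary mapping to $P^0$. (This can be obtained by applying Lemma 1 to the map $(f,\sigma)\colon M\times P \to N\times N$ and the diagonal submanifold, once one checks the relevant transversality.) On this stratum the tangent sequence for $\pi_P$ reads
$$0 \rightarrow Ker(D\pi_P) \rightarrow T(f^{-1}(P))^0 \rightarrow \pi_P^*TP^0 \rightarrow Coker(D\pi_P) \rightarrow 0,$$
and the canonical isomorphism of top exterior powers associated to any exact sequence yields
$$\Lambda^{max}T(f^{-1}(P))^0 \;\cong\; \det(\pi_P)\otimes \pi_P^*\Lambda^{max}TP^0.$$
The right hand side is trivialized by the orientations of $\pi_P$ and of $P$, so $(f^{-1}(P))^0$ inherits a trivialization of its top exterior power, i.e.\ an orientation, which is by definition an orientation of $f^{-1}(P)$.

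The main technical nuisance, and the only part that needs care, is that $Ker(D\pi_P)$ and $Coker(D\pi_P)$ can have jumping rank, so $\det(\pi_P)$ is really defined via the stabilization trick of the excerpt, replacing $D\pi_P$ by $D\pi_P\oplus L$ for an auxiliary surjection $L\colon\underline{\Ar^n}\to \pi_P^*TP$. Checking that the displayed determinant isomorphism remains natural after this stabilization is a routine exterior algebra computation: the factors of $\Lambda^n\underline{\Ar^n}$ cancel in pairs on both sides of the sequence. No corner contributions appear because we restrict to the open top stratum throughout, and one should check at the end that the induced orientation is independent of the auxiliary choices, which again follows from the naturality of the determinant of an exact sequence.
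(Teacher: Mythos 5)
Your argument is correct, and it reaches the same conclusion by a slightly different factorization than the paper. The paper's proof never mentions the determinant line of the projection $f^{-1}(P)\rightarrow P$ at all: it works directly with the constant-rank bundles $TM$, $TN$, $TP$ pulled back to the fiber product, combining the sequence $0\rightarrow Ker\, Df\rightarrow TM\rightarrow TN\rightarrow Coker\, Df\rightarrow 0$ (which converts the orientation of $f$ into a trivialization of $\Lambda^{max}TM\otimes\Lambda^{max}(TN)^*$) with the sequence $0\rightarrow Tf^{-1}(P)\rightarrow TM\oplus TP\rightarrow TN\rightarrow 0$, so the jumping-rank issue never arises on the fiber product. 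You instead first invoke the preceding lemma (with the roles of $f$ and $\sigma$ exchanged, which is legitimate by the symmetry of the fiber product) to orient the projection $\pi_P$, and then use the general principle that an oriented map onto an oriented base has oriented total space, via $\Lambda^{max}T(f^{-1}(P))^0\cong \det(\pi_P)\otimes\pi_P^*\Lambda^{max}TP^0$. This is more modular --- it isolates a reusable statement and makes explicit where the previous lemma enters --- but it forces you to carry the stabilized determinant $\det(\pi_P)$ on the fiber product and to check, as you note, that the exact-sequence isomorphism is compatible with the stabilization $D\pi_P\oplus L$; that check is indeed routine (the sequence $0\rightarrow Ker(D\pi_P\oplus L)\rightarrow T(f^{-1}(P))^0\oplus\underline{\Ar^n}\rightarrow \pi_P^*TP^0\rightarrow 0$ does it), so nothing is missing, but the paper's direct combination of the two sequences avoids the point entirely. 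Your restriction to the top stratum is consistent with the paper's convention that an orientation of a manifold with corners is an orientation of $P^0$.
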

\begin{proof}
The tangent space to $f^{-1}(P)$ is given by $(v,w)\in TM\oplus TP$ with $Df(v)=D\sigma(w)$.  The exact sequence $$0\rightarrow Ker Df\rightarrow TM \rightarrow TN \rightarrow Coker Df \rightarrow 0$$ gives a trivialization of $\Lambda^{max} TM \otimes \Lambda^{max} (TN)^*$ while the sequence $$0\rightarrow Tf^{-1}(P) \rightarrow TM\oplus  TP \rightarrow TN \rightarrow 0$$ implies than a trivialization of $\Lambda^{max} Tf^{-1}(P)$ is equivalent to that of $\Lambda^{max} TM \otimes \Lambda^{max} (TN)^* \otimes \Lambda^{max} TP$. 
\end{proof}
Note that given an oriented map $\sigma:P\rightarrow M$, we get an induced oriented map $\partial \sigma: \partial P\rightarrow M$.  The following is straighforward to verify using a local chart:
\begin{lemma}
Given an oriented map $\sigma:P\rightarrow M$, there exists a diffeomorphism $$\phi: \partial^2 P \rightarrow \partial^2 P$$ commuting with $\sigma$ such that $$\phi^*(det(\sigma))\cong -det(\sigma)$$ as oriented line bundles. 
\end{lemma}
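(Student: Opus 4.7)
The argument is local. Fix a chart $V \times [0,1)^k$ for $P$ near a codimension-two stratum point. By the construction in the preceding lemma, $\partial^2[0,1)^k = \coprod_{i \neq j} \partial_i \partial_j [0,1)^k$, and for each unordered pair $\{\alpha,\beta\}$ the two pieces $\partial_\alpha \partial_\beta$ and $\partial_\beta \partial_\alpha$ represent the same corner $\{t_\alpha = t_\beta = 0\}$ reached via opposite orders of face operations. Define $\phi$ to be the involution pointwise swapping these two pieces; these local involutions glue across charts to a global diffeomorphism $\phi : \partial^2 P \to \partial^2 P$. Since both copies project to the same point of $P$, we automatically have $\sigma \circ \phi = \sigma$.

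The orientation on $det(\partial^2 \sigma)$ induced from the given orientation of $det(\sigma)$ comes from iterating the boundary formula
\[
det(\partial \sigma) \cong det(\sigma)|_{\partial P} \otimes \nu^*,
\]
where $\nu$ is the outward normal line of $\partial P \subset P$. This isomorphism is obtained by applying the canonical identification $\bigotimes_{even} \Lambda^{max} V_i \cong \bigotimes_{odd} \Lambda^{max} V_i$ to the snake-lemma exact sequence
\[
0 \to ker\, D(\partial \sigma) \to ker\, D\sigma|_{\partial P} \to \nu \to coker\, D(\partial \sigma) \to coker\, D\sigma|_{\partial P} \to 0.
\]
Iterating gives $det(\partial^2 \sigma) \cong det(\sigma)|_{\partial^2 P} \otimes \nu_1^* \otimes \nu_2^*$ on each codimension-two piece, with $\nu_1$ and $\nu_2$ the outward normals at the two successive face steps.

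At a point of $\partial_\alpha \partial_\beta[0,1)^k$ (face $\beta$ first, then face $\alpha$), the pair $(\nu_1, \nu_2)$ is trivialized by $(-\partial_{t_\beta}, -\partial_{t_\alpha})$; at the corresponding point of $\partial_\beta \partial_\alpha[0,1)^k$, by $(-\partial_{t_\alpha}, -\partial_{t_\beta})$. Unpacking the iterated identification, each ordering identifies $det(\partial^2 \sigma)$ with $det(\sigma)$ via wedging by lifts of the two outward normals in opposite orders inside the $2$-dimensional quotient $ker\, D\sigma / ker\, D(\partial^2\sigma)$. Since $(-\partial_{t_\beta}) \wedge (-\partial_{t_\alpha}) = -(-\partial_{t_\alpha}) \wedge (-\partial_{t_\beta})$, the two induced orientations on $det(\partial^2 \sigma)$ disagree by a sign at corresponding points. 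As $\phi$ identifies these points, we obtain $\phi^*(det(\sigma)) \cong -det(\sigma)$, interpreting $det(\sigma)$ on $\partial^2 P$ as $det(\partial^2 \sigma)$ with its induced orientation.

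The main subtlety is fixing the sign convention for the boundary formula $det(\partial \sigma) \cong det(\sigma) \otimes \nu^*$ so that it matches the ``outward normal first'' convention used earlier for the boundary orientation of $P^0$; a careless placement of the normal in the wedge would simply propagate, and the final antisymmetry would still yield the stated sign, but one must check that the two applications of the formula are performed consistently. Once this is settled, the sign of the lemma falls out of the antisymmetry of $\Lambda^2$ applied to the two outward normal directions at the corner.
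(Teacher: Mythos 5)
Your argument is correct and is essentially the verification the paper has in mind: the lemma is left there as a local-chart check, and your involution swapping the two orderings $\partial_i\partial_j$ and $\partial_j\partial_i$ of the corner faces is the same one used in the earlier lemma showing $\partial^2\sigma$ is trivial, with the sign produced by the antisymmetry of the wedge of the two outward normal directions. One small refinement: to make your statement about the $2$-dimensional quotient $\ker D\sigma/\ker D(\partial^2\sigma)$ literally correct you should run the computation with the stabilized operator $D\sigma\oplus L$ from the paper's definition of $\det(\sigma)$ (so the cokernels vanish and the two normals genuinely span that quotient); with that adjustment the bookkeeping is exactly as you describe and the sign falls out as claimed.
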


\subsection{Cutting by a Hypersurface}
We will need to cut a manifold by a hypersurface obtaining a decomposition of the manifold into two parts.  Let $$\sigma : P\rightarrow M$$ be as above. Consider a smooth map $$f:M\rightarrow \Ar$$ with $p$ a regular value of $f\circ \sigma$.  Let $$\sigma^+=\sigma_{|(f\circ \sigma)^{-1}[p,\infty)}$$ $$\sigma^-=\sigma_{|(f\circ \sigma)^{-1}(-\infty,p]}$$ and $$\sigma^0=\sigma_{|(f\circ \sigma)^{-1}(p)}$$  Let $$P'=(f\circ \sigma)^{-1}(p)$$
\begin{theorem}
    $\sigma^{\pm}$ are manifolds with corners.  Furthermore, $$\partial(\sigma^{\pm})=(\partial \sigma)^{\pm}\sqcup \pm \sigma^{0}$$
\end{theorem}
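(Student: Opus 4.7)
The plan is to reduce everything to local coordinates near a point of $P' = (f\circ\sigma)^{-1}(p)$, using the regular value hypothesis to straighten $g := f\circ\sigma$ into a single coordinate function. Away from $P'$ the sets on which $\sigma^{\pm}$ are defined are open in $P$, so $\sigma^{\pm}$ trivially inherit the manifold-with-corners structure there; all the content is at $P'$.

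Near a point $q \in P' \cap P^k$, pick a chart $\phi:\Ar^{n-k}\times [0,1)^k \to P$ sending $0$ to $q$. The composition $g\circ \phi$ is smooth, and since $p$ is a regular value on the open stratum containing $q$, some partial derivative of $g\circ\phi$ in the $\Ar^{n-k}$-directions is nonzero at $0$. Applying the implicit function theorem parametrically in the corner variables $t\in[0,1)^k$ (changing only the $\Ar^{n-k}$ coordinates), I arrange $g(u,t)=u_1+p$. Then locally
$$P^+ \cap \phi(U) \;\cong\; [0,\infty)\times \Ar^{n-k-1}\times [0,1)^k,$$
a standard $(k{+}1)$-corner model, and similarly for $P^-$; hence $\sigma^{\pm}$ are manifolds with corners.

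With this chart in hand the boundary decomposition is immediate. The boundary of the model $[0,\infty)\times\Ar^{n-k-1}\times[0,1)^k$ splits into the single new face $\{u_1=0\}\times \Ar^{n-k-1}\times [0,1)^k$, which patches together globally to give $P'$, and the $k$ faces coming from $\partial [0,1)^k$, which patch together to give $(\partial P)^+$. Restricting $\sigma$ to each piece yields $\partial\sigma^+ = (\partial\sigma)^+ \sqcup \sigma^0$ as maps, and likewise for $\sigma^-$.

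The sign $\pm$ on $\sigma^0$ is then an orientation check via the outward-normal-first convention. On $P^+$ the outward normal at $\{g=p\}$ points in the $-\partial_{u_1}$ direction, while on $P^-$ it points in $+\partial_{u_1}$; the induced orientations on $P'$ therefore differ by a sign, giving $\partial\sigma^+ = (\partial\sigma)^+ \sqcup \sigma^0$ and $\partial\sigma^- = (\partial\sigma)^- \sqcup (-\sigma^0)$, which is exactly the formula $\partial(\sigma^{\pm})=(\partial\sigma)^{\pm}\sqcup \pm\sigma^0$. The only subtle point is the compatibility of the straightening with the pre-existing corner structure at a point $q\in P^k$: regularity of $p$ on the open stratum $P^k$ ensures a nonzero partial derivative is available purely in the $\Ar^{n-k}$-directions, so the implicit function theorem can be applied without ever disturbing the corner coordinates, which is precisely what is needed for the local model to remain a standard corner chart after the cut.
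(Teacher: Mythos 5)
Your argument is correct and is essentially the paper's own proof: the paper likewise works in a local chart $V\times[0,1)^k$ and uses the inverse function theorem to put $\sigma^{\pm}$ in the product form $(\sigma_{|V})^{\pm}\times[0,1)^k$, then reads off the boundary and the sign on $\sigma^0$ from the product decomposition. Your version just makes explicit the two points the paper leaves implicit — that stratumwise regularity supplies a nonvanishing derivative in the $\Ar^{n-k}$-directions so the straightening never touches the corner coordinates, and that the outward-normal-first convention produces opposite orientations on $P'$ from the two sides — which is a fine elaboration rather than a different route.
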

\begin{proof}
Locally we have $$\sigma:V\times [0,1)^k \rightarrow M$$ where $V$ is a manifold without boundary.  Applying the inverse function theorem, we can represent $\sigma^{\pm}$ as $(\sigma_{|V})^{\pm}\times [0,1)^k$.  The formula for the boundary follows since $$\partial(\sigma_{|V}^{\pm}\times [0,1)^k)=\partial \sigma_{|V}^{\pm}\times [0,1)^k\sqcup \sigma_{|V}^{\pm}\times \partial[0,1)^k$$ and $$\partial \sigma_{|V}^{\pm}=\pm \sigma_{|V}^{0}$$
\end{proof}

We also need to prove a result that will allows us to cut cycles into smaller pieces.  We  introduce a manifold $\Crease(\sigma)$ that interpolates between $\sigma$ and "creasing" $\sigma$ along $\sigma^{0}$. Let $D$ be subset of $\Ar^2$ given by $$\{(x,y)\in \Ar^2 | 0\leq y \leq 2-|x|,|x|<1\}$$  $D$ inherits the structure of a manifold with corners as a subset of the plane.  We define a homeomorphism  $\phi:D \rightarrow (-1,1)\times [0,1]$.  Let $$\phi(x,y)=(x,\frac{y}{2-|x|})$$  Note that $\phi$  is a diffeomorphism outside $x=0$. 
%\begin{figure}
%	\centering
%	\includegraphics{C:/Users/Max/Downloads/house.jpg}
%	\label{fig:house}
%\end{figure}

Let $\Crease(\sigma)$ be obtained as follows. As a topological space, $$\Crease(\sigma)=P\times [0,1]$$  Now, we specify the stratification. Outside, $P'\times [0,1]$, let the manifold structure be given by the product structure $(P-P')\times [0,1]$. Near a point on $P'$, we can locally write $P$ as $P=V\times (-\ep,\ep)$ with $f(\sigma(v,t))=t$ and $V$ is a manifold with corners. We take the manifold structure to be $V\times D_\ep$ where $$D_\ep=\{(x,y)\in D|=-\ep<x<\ep\}$$  Here we identify  $P=V\times (-\ep,\ep)\subset V\times D_\ep$ as points with $y=0$.  The overlap chart map $$V\times D_\ep \rightarrow V\times (-\ep,\ep)\times [0,1]$$ is induced by restricting $\phi$ to $$\phi_\ep:D_\ep \rightarrow (-\ep,\ep)\times [0,1]$$  
Since $\sigma$ induces a map $\Crease(\sigma)$ by composing $\sigma$ with the smooth projection $$V\times D_\ep\rightarrow V\times (-\ep,\ep)$$ 
we have
\begin{lemma}
$\sigma$ induces smooth map on $\Crease(\sigma)$.  Furthermore, if $\sigma$ is trivial, so is $\Crease(\sigma)$.
\end{lemma}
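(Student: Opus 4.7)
The plan is to verify both assertions by working in the two local models that cover $\Crease(\sigma)$: the ordinary product charts away from $P' \times [0,1]$, and the creased charts $V \times D_\epsilon$ near $P'$. For the first assertion, I would define the induced map $\tilde\sigma : \Crease(\sigma) \to M$ topologically by $(p,s) \mapsto \sigma(p)$. On a product chart this is visibly smooth as $\sigma$ composed with the projection to $P$. On a creased chart $V \times D_\epsilon$, using the local identification $P \cong V \times (-\epsilon,\epsilon)$, the induced map reads $(v,(x,y)) \mapsto \sigma(v,x)$, which factors as the smooth projection $V \times D_\epsilon \to V \times (-\epsilon,\epsilon)$ followed by $\sigma$; this is precisely the factorization recorded in the paragraph preceding the lemma. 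Compatibility on the overlap is automatic because both formulas recover $\sigma(p)$.

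For the triviality statement, let $\tau : P \to P$ be an orientation-reversing diffeomorphism with $\sigma \circ \tau = \sigma$. I would extend it to $\tilde\tau : \Crease(\sigma) \to \Crease(\sigma)$ by $\tilde\tau(p,s) = (\tau(p), s)$. Topologically this is a homeomorphism that commutes with $\tilde\sigma$, and since $\Crease(\sigma)$ agrees with the product $P \times [0,1]$ as a smooth manifold away from $P' \times [0,1]$, orientation-reversal on the top stratum is immediate from $\tau$ reversing the orientation of $P^0$ combined with the trivial action on $[0,1]$.

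The main obstacle is smoothness of $\tilde\tau$ inside the creased charts, and the key observation is that $\tau$ preserves the level sets of $f \circ \sigma$, since $f \circ \sigma \circ \tau = f \circ \sigma$. Using coordinate patches on $P$ adapted so that $f \circ \sigma(v,t) = t$, as are used to construct $\Crease(\sigma)$, the diffeomorphism $\tau$ preserves the $t$-coordinate, and therefore locally takes the form $\tau(v,t) = (\tau_t(v), t)$ with $(v,t) \mapsto \tau_t(v)$ smooth. Consequently, on the creased chart $V \times D_\epsilon$ the involution $\tilde\tau$ reads $(v,(x,y)) \mapsto (\tau_x(v),(x,y))$, which is smooth across the crease $\{x=0\}$ because it factors through the smooth projection $V \times D_\epsilon \to V \times (-\epsilon, \epsilon)$. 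Assembling these local expressions produces the desired smooth, orientation-reversing involution commuting with $\tilde\sigma$ and proves that $\Crease(\sigma)$ is trivial.
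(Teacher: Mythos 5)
Your proposal is correct and matches the paper's intent: the paper treats the first assertion exactly as you do (the induced map factors through the smooth projection $V\times D_\ep\rightarrow V\times(-\ep,\ep)$) and leaves the triviality statement to the reader. Your filled-in detail for that part — that $\sigma\circ f_{\mathrm{rev}}=\sigma$ forces the orientation-reversing diffeomorphism to preserve the level sets of $f\circ\sigma$, so in the adapted coordinates it has the form $(v,t)\mapsto(\tau_t(v),t)$ and hence extends smoothly across the crease chart as $(v,(x,y))\mapsto(\tau_x(v),(x,y))$ — is precisely the point that makes the omitted argument work.
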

 
\begin{lemma}
$\partial(\Crease(\sigma))=-\sigma \sqcup \sigma^{+}\sqcup\sigma^{-}\sqcup \Crease(\partial \sigma)$
\end{lemma}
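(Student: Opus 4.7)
The plan is to verify the boundary formula by unraveling the definition of $\Crease(\sigma)$ in the two kinds of local charts that make up its smooth structure and then check that the pieces patch together coherently. Away from $P'$, the construction uses the honest product chart $(P-P')\times[0,1]$, whose boundary is the disjoint union $\partial(P-P')\times[0,1]\sqcup(P-P')\times\{0\}\sqcup(P-P')\times\{1\}$. The bottom face contributes (with a sign from the outward-normal-first convention) to $-\sigma$, the top face splits according to the sign of $f\circ\sigma-p$ into the portions of $\sigma^+$ and $\sigma^-$ lying over $P-P'$, and the remaining factor is the part of $\Crease(\partial\sigma)$ sitting away from $\partial P'$.

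The interesting work takes place in the chart $V\times D_\ep$ near a point of $P'$. I would first show, purely in the plane, that the region $D_\ep=\{(x,y):0\leq y\leq 2-|x|,\ |x|<\ep\}$ is a manifold with corners whose boundary $\partial D_\ep$ consists of three smooth strata: the bottom edge $\{y=0\}$, the top-right edge $\{y=2-x,\ 0\leq x<\ep\}$, and the top-left edge $\{y=2+x,\ -\ep<x\leq 0\}$. These meet at the single corner point $(0,2)$, which by the paper's convention for $\partial$ is counted once in each top edge. Thus
\[
\partial(V\times D_\ep)=\partial V\times D_\ep\ \sqcup\ V\times\partial D_\ep,
\]
and I would push each factor forward through the projection $V\times D_\ep\to V\times(-\ep,\ep)$ followed by $\sigma$. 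The bottom edge then gives the restriction of $\sigma$ (with outward normal $-\partial_y$, accounting for the minus sign); the top-right edge, being parametrized by $x\in[0,\ep)$, gives exactly the restriction of $\sigma^+$; and the top-left edge gives the restriction of $\sigma^-$. Under this identification, the corner $V\times\{(0,2)\}$ corresponds to $\sigma^0$, matching the formulas $\partial\sigma^\pm=(\partial\sigma)^\pm\sqcup\pm\sigma^0$ from the cutting theorem.

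With the two kinds of charts analyzed, I would patch them together. The bottom-face contributions from the product chart and from $V\times D_\ep$ glue along their overlap to form one copy of $\sigma$ on all of $P$, which enters the formula as $-\sigma$. The top-face contributions likewise glue into $\sigma^+$ and $\sigma^-$ meeting along a common boundary $P'\cong\sigma^0$, consistent with the cutting theorem. Finally, the factors $\partial(P-P')\times[0,1]$ and $\partial V\times D_\ep$ glue into $\Crease(\partial\sigma)$, using that the crease construction commutes with restriction to $\partial P$ (this is essentially tautological from the local models, since $f\circ\partial\sigma$ has the same regular value $p$ and the same normal form $V'\times(-\ep,\ep)$ with $V'\subset\partial V$).

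The step I expect to be the main obstacle is the orientation bookkeeping. One must confirm that the $y=0$ face appears with a minus sign, that the top-right and top-left faces receive exactly the orientations of $\sigma^+$ and $\sigma^-$ induced from the orientation of $\Crease(\sigma)$ so that they are consistent with the $\pm\sigma^0$ appearing in $\partial\sigma^\pm$, and that $\partial V\times D_\ep$ is oriented as $\Crease(\partial\sigma)$. These are local computations in the plane, but the nonstandard corner of $D_\ep$ at $(0,2)$ forces one to apply the outward-normal-first convention carefully along edges that are not axis-aligned; this is where I would slow down and write everything out explicitly in the $(x,y)$ coordinates.
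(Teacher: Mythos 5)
Your argument is correct and is essentially the paper's own proof: the key step in both is the chart $V\times D_\ep$ near $P'$, whose boundary $\partial V\times D_\ep\sqcup V\times\partial D_\ep$ decomposes via the three faces of $D_\ep$ (bottom edge $\to -\sigma$, edge over $[0,\ep)$ $\to \sigma^+$, edge over $(-\ep,0]$ $\to \sigma^-$, and $\partial V\times D_\ep\to\Crease(\partial\sigma)$). Your additional treatment of the product chart away from $P'$, the gluing of the pieces, and the orientation bookkeeping simply makes explicit what the paper leaves implicit.
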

\begin{proof}
We check the statement in a neighborhood of $P'$.  We have the chart $V\times D_\ep$.   We have $$\partial(V\times D_\ep)=\partial V\times D_\ep \sqcup V\times \partial D_\ep$$ while $$\partial D_\ep=(-\ep,\ep)\sqcup (-\ep,0]\sqcup [0,\ep)$$
$\partial V\times D_\ep$  corresponds to $\Crease(\partial P)$\\\\ $V\times (-\ep,\ep)$ corresponds to $P$\\\\
$V\times [0,\ep) $ corresponds to $\sigma^{+}$ and $V\times (-\ep,0] $ to $\sigma^{-}$.
\end{proof}

\section{Definition of the Homology Groups} 
\label{homdef}
Fix some countable infinite dimensional Hilbert space $\mathbb{H}$ once and for all.  To avoid set-theoretic complications, all our chains will be subsets of $\mathbb{H}$. Let $M$ be a smooth paracompact manifold without boundary.  We wish to construct a homology theory for $M$ based on mappings of manifolds with corners into $M$.  Although we focus on the case $M$ is finite dimensional, most of the methods of this paper carry over rather directly to the case when $M$ is an infinite dimensional paracompact Banach  manifold. 
\begin{definition}
A \term{chain} is a smooth map $$\sigma:P\rightarrow M$$ where $P$ is a compact oriented manifold with corners embedded in $\mathbb{H}$.  Two chains $\sigma:P\rightarrow M$ and $\tau:Q\rightarrow M$ are said to be \term{isomorphic} if there exists an orientation preserving diffeomorphism $f:P\rightarrow Q$ such that $\tau\circ f=\sigma$.
\end{definition}
We will assume that all the components of $P$ have the same dimension.
\begin{definition}
A chain $\sigma: P\rightarrow M$ is said to be \term{trivial} if there exists an orientation reversing diffeomorphism $f:P\rightarrow P$ with $\sigma\circ f=\sigma$.
\end{definition}
\begin{definition}
A chain $\sigma:P\rightarrow M$ is said to have \term{small image} if $\sigma(P)\subset g(N)$ where $g:N\rightarrow M$ is a smooth map from a manifold with corners (not necessarily compact) with $dim(N)<dim(P)$.
\end{definition}
\begin{definition}
A chain $\sigma$ is said to be \term{degenerate} if $\sigma$ has small image and $\partial \sigma$ is isomorphic to a disjoint union of a trivial chain and a chain with small image.
\end{definition}
As the simplest example, note that the constant map $[0,1]\rightarrow pt$ is degenerate but the boundary does not have small image.  
\begin{lemma}
Let $\tau$ be a trivial chain.  If $\sigma \sqcup \tau$ is trivial, then $\sigma$ is trivial. \label{lem32}
\end{lemma}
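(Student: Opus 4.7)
The strategy is to construct the required $f : P \to P$ by splicing together pieces of the two given diffeomorphisms. Let $h : P \sqcup Q \to P \sqcup Q$ be the orientation-reversing diffeomorphism witnessing the triviality of $\sigma \sqcup \tau$, and $g : Q \to Q$ the orientation-reversing diffeomorphism witnessing the triviality of $\tau$. Extend $g$ by the identity on $P$ to a diffeomorphism $\tilde g : P \sqcup Q \to P \sqcup Q$; then $\tilde g$ preserves $\sigma \sqcup \tau$ and is orientation-reversing precisely on the $Q$-components.

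Any word in $h$ and $\tilde g$ is a self-diffeomorphism of $P \sqcup Q$ preserving $\sigma \sqcup \tau$, and its orientation sign at a point $x$ equals $(-1)^N$, where $N$ counts the applications of $h$ along the trajectory of $x$ plus the applications of $\tilde g$ at $Q$-points (applications of $\tilde g$ at $P$-points contribute the identity and do not change orientation). In particular, an orientation-reversing diffeomorphism $C \to C'$ between $P$-components that preserves $\sigma$ is obtained from any word that sends $C$ to $C'$ and has $N$ odd. I would therefore form the graph $\Gamma$ whose vertices are the connected components of $P \sqcup Q$ and whose edges record $h$ and $g$, reducing the construction of $f$ to choosing, for each $P$-component $C$, a target $P$-component $C' = f(C)$ together with an odd-length walk $C \to C'$ in $\Gamma$, such that the assignment $C \mapsto C'$ is a bijection of $P$-components.

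The main obstacle is to show that such a matching always exists. Working orbit by orbit under the group $\langle h, \tilde g \rangle$: within each orbit, either $\Gamma$ contains an odd cycle, in which case odd-length walks between any two $P$-components in the orbit are readily produced by splicing an odd cycle into an arbitrary walk, or $\Gamma$ is bipartite. In the bipartite case, every $\rho$-cycle of $g$ must alternate colours and hence have even length, so the white and black $Q$-components in the orbit are equinumerous; a degree-count in the bipartite graph then forces the white and black $P$-components in the orbit to be equinumerous as well, so that they can be paired off using odd-length walks between opposite-coloured $P$-components. Performing this construction on every $\langle h,\tilde g\rangle$-orbit and assembling the resulting diffeomorphisms yields the desired orientation-reversing self-diffeomorphism $f : P \to P$ with $\sigma \circ f = \sigma$.
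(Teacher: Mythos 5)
Your argument is correct, but it takes a genuinely different route from the paper's. The paper never manipulates the given diffeomorphisms directly: it groups the components of $\sigma$ into classes of mutually isomorphic (up to orientation) components and uses the characterization that a chain is trivial exactly when each class either consists of components admitting orientation-reversing self-isomorphisms commuting with the map, or has vanishing signed count of components; since a trivial $\tau$ only contributes classes that are themselves trivial or contribute zero to the signed count, triviality of $\sigma\sqcup\tau$ forces triviality of $\sigma$. You instead build the witnessing automorphism of $\sigma$ by hand, splicing restrictions of $h$ and $\tilde g$ along walks in the component graph and running the parity/bipartiteness dichotomy orbit by orbit. The crucial steps check out: every non-loop edge reverses orientation, so the sign of a word on a component is the parity of the walk length; in the non-bipartite case splicing an odd cycle produces odd walks from any $P$-component to itself; in the bipartite case the $g$-cycles on $Q$-components alternate colours, and your degree count (together with that $Q$-equality) forces the white and black $P$-components of the orbit to be equinumerous, so they can be matched by odd walks; the assignment is a bijection of $P$-components, so the assembled $f$ is an orientation-reversing self-diffeomorphism with $\sigma\circ f=\sigma$. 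What each approach buys: the paper's proof is shorter and the structural characterization it isolates is reused elsewhere (in the lemma that $Q(M)$ absorbs disjoint summands and in the torsion-freeness lemma), whereas your construction uses only the two given witnesses and requires no classification of components up to isomorphism. Do note that both arguments quietly use that $P\sqcup Q$ has finitely many components (compactness of chains); in your bipartite case the subtraction of the $Q$-counts from the total counts is exactly where this finiteness is needed.
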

\begin{proof}
We decompose $\sigma$ into mutually isomorphic components as $$\sigma=\sigma_1\sqcup \sigma_2 \dots $$ Here, each $\sigma_i$ consists of the disjoint union of isomorphic connected components of (up to orientation) of $\sigma$ and no $\sigma_i$, $\sigma_j$ share isomorphic components when $i\neq j$.  An automorphism of $\sigma$ preserved these components.  Therefore, $\sigma$ is trivial exactly when for each i, either the number of components of $\sigma_i$ is zero when counted with orientation or each such component admits an orientation reversing isomorphism.  Since $\tau$ is trivial, $\sigma \sqcup \tau$ either adds components which are trivial or adds zero components when counted with sign.  Therefore, the only way $\sigma \sqcup \tau$ is trivial is if $\sigma$ was already trivial.
\end{proof}
\begin{definition}
Let $Q(M)$ be the set of chains isomorphic to $\alpha \sqcup \beta$ where $\alpha$ is trivial and $\beta$ is degenerate.  We allow $\alpha$ and $\beta$ to be empty.
\end{definition}

\begin{lemma}
If $\sigma$ is in $Q(M)$, so is $\partial \sigma$.  
\end{lemma}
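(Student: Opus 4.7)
The plan is a direct bookkeeping argument that uses three earlier facts in sequence: the triviality of $\partial$ applied to a trivial chain, the triviality of $\partial^{2}$ on any chain, and the cancellation Lemma~\ref{lem32}.

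First, write $\sigma \cong \alpha \sqcup \beta$ where $\alpha$ is trivial and $\beta$ is degenerate, so that $\partial\sigma \cong \partial\alpha \sqcup \partial\beta$. Since an orientation-reversing self-diffeomorphism of $\alpha$ commuting with the map descends to $\partial\alpha$ (this was noted earlier in the excerpt), $\partial\alpha$ is trivial. By the definition of degenerate, $\beta$ has small image and $\partial\beta \cong \tau \sqcup \mu$ where $\tau$ is trivial and $\mu$ has small image. Thus
\[
\partial\sigma \;\cong\; \partial\alpha \sqcup \tau \sqcup \mu \;=\; (\partial\alpha \sqcup \tau) \sqcup \mu,
\]
and the first summand is a disjoint union of two trivial chains, hence trivial.

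It remains to show that $\mu$ is degenerate. By hypothesis it already has small image, so I only need to analyze $\partial\mu$. Apply $\partial$ to the isomorphism $\partial\beta \cong \tau \sqcup \mu$ to get $\partial^{2}\beta \cong \partial\tau \sqcup \partial\mu$. The previous lemma about $\partial^{2}$ gives that $\partial^{2}\beta$ is trivial, and $\partial\tau$ is trivial because $\tau$ is. Lemma~\ref{lem32} then forces $\partial\mu$ to be trivial. Since the definition of degenerate allows the small-image component to be empty, $\mu$ qualifies as degenerate: its boundary is a trivial chain disjoint union the empty small-image chain. Hence $\partial\sigma \in Q(M)$.

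The only step that requires any care is the compatibility of the various isomorphisms with the boundary operator; concretely, one must be sure that the isomorphism $\partial\beta \cong \tau \sqcup \mu$ (which is just a diffeomorphism of manifolds with corners commuting with the map to $M$) induces an isomorphism on boundaries that respects disjoint union, so that $\partial^{2}\beta \cong \partial\tau \sqcup \partial\mu$. This is immediate from the functoriality of $\partial$ under diffeomorphisms, as used implicitly throughout the section. Everything else is formal set-theoretic manipulation of disjoint unions.
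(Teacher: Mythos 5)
Your proof is correct and follows essentially the same route as the paper: decompose $\sigma$ into a trivial and a degenerate piece, use that the boundary of a trivial chain is trivial, and then combine the triviality of $\partial^{2}$ with the cancellation Lemma~\ref{lem32} to conclude that the small-image part of the boundary is itself degenerate. The paper's proof is just a terser version of this same bookkeeping.
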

\begin{proof}
Since the  boundary of a trivial chain is trivial, we can focus on when $\sigma$ is degenerate.  We have $\partial \sigma=\alpha \sqcup \beta$ with $\alpha$ trivial and $\beta$ with small image.  We need to show that $\beta$ is degenerate. Since $\partial^2 \sigma $ is trivial, we have $\partial \alpha \sqcup \partial \beta$ trivial.  By the previous lemma $\partial \beta$ is trivial, hence $\beta$ is degenerate as desired.   
\end{proof}
The following is the key step to defining an equivalence relation based on chains in $Q(M)$:
\begin{lemma}
If $\sigma\sqcup \tau \in Q(M)$ for some $\tau \in Q(M)$, we have $\sigma \in Q(M)$.  
\end{lemma}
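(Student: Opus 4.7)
The plan is to give a clean characterization of $Q(M)$ in terms of a canonical splitting of every chain into a ``reduced'' part and a ``small image'' part, and then to deduce the claim via two applications of Lemma \ref{lem32}.

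For any chain $\rho$, write $\rho=\rho_r\sqcup\rho_s$, where $\rho_s$ is the disjoint union of those connected components that have small image when considered as chains on their own, and $\rho_r$ collects the rest. This splitting is canonical because having small image is invariant under isomorphism of connected chains: if $f:P_1\to P_2$ satisfies $\sigma_2\circ f=\sigma_1$ and $\sigma_1(P_1)\subset g(N)$ with $\dim N<\dim P_1$, then $\sigma_2(P_2)\subset g(N)$ as well. In particular, every isomorphism class of connected components lies entirely inside $\rho_r$ or entirely inside $\rho_s$, so the ``per isomorphism class'' triviality criterion from the proof of Lemma \ref{lem32} yields: a chain $\alpha$ is trivial if and only if both $\alpha_r$ and $\alpha_s$ are separately trivial.

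I then claim: $\rho\in Q(M)$ if and only if $\rho_r$ is trivial and $\rho_s$ is degenerate. For the non-obvious direction, if $\rho\cong\alpha\sqcup\beta$ with $\alpha$ trivial and $\beta$ degenerate, then all components of $\beta$ have small image and therefore lie inside $\rho_s$; splitting $\alpha=\alpha_r\sqcup\alpha_s$ (each separately trivial, by the previous paragraph), we get $\rho_r=\alpha_r$ trivial, while $\rho_s=\alpha_s\sqcup\beta$ has small image with boundary $\partial\alpha_s\sqcup\partial\beta$, a disjoint union of a trivial chain and a trivial-plus-small-image chain, hence trivial plus small image. So $\rho_s$ is degenerate. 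The same argument also gives: for any chain $\mu$, ``$\mu$ is isomorphic to a disjoint union of a trivial chain and a chain with small image'' is equivalent to ``$\mu_r$ is trivial.''

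Given this characterization, the conclusion is immediate. Assume $\sigma\sqcup\tau\in Q(M)$ and $\tau\in Q(M)$. From $(\sigma\sqcup\tau)_r=\sigma_r\sqcup\tau_r$ trivial together with $\tau_r$ trivial, Lemma \ref{lem32} forces $\sigma_r$ to be trivial. Similarly, $(\sigma\sqcup\tau)_s=\sigma_s\sqcup\tau_s$ is degenerate and $\tau_s$ is degenerate, so taking boundaries and passing to reduced parts we have $(\partial\sigma_s)_r\sqcup(\partial\tau_s)_r$ trivial and $(\partial\tau_s)_r$ trivial; Lemma \ref{lem32} once more gives $(\partial\sigma_s)_r$ trivial, whence $\partial\sigma_s$ is trivial plus small image and $\sigma_s$ is degenerate. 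Combining, $\sigma\in Q(M)$. The main obstacle is the initial characterization: one must verify that small-image components form a union of isomorphism classes rather than an arbitrary subset, so that the cancellation argument in Lemma \ref{lem32} respects the splitting. Once that is in place, the rest is clean bookkeeping.
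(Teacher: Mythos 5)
Your proof is correct and takes essentially the same route as the paper: decompose the chain into connected components grouped by the (isomorphism-invariant) small-image property, use Lemma \ref{lem32} to cancel the trivial parts against $\tau$, and then repeat the argument at the level of boundaries to conclude that the small-image part is degenerate. Your canonical splitting $\rho=\rho_r\sqcup\rho_s$ and the characterization of $Q(M)$ in terms of it simply make explicit the bookkeeping that the paper's terser proof leaves implicit.
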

\begin{proof}
We decompose $\sigma$ as $\sigma_1\sqcup \sigma_2 \dots $ as in the  lemma above.  The hypothesis implies that each $\sigma_i$ is either trivial or has small image.  Thus, we can write $\sigma=\alpha \sqcup \beta$ with $\alpha$ trivial and $\beta$ having small image.  We show that $\beta$ is degenerate.  We have $\partial \sigma \sqcup \partial \tau \in Q(M)$.  By repeating the argument, this implies that $\partial \beta$ is a union of a small chain and a trivial chain.  Therefore, $\beta$ is degenerate.   
\end{proof}

We are ready to define our chain complex:
\begin{definition}
Define an equivalence on chains as follows:  $\sigma \sim \tau$ if $\sigma \sqcup -\tau$ is in $Q(M)$.  We denote the resulting set by $C_*(M)$.
\end{definition}
\begin{lemma}
$\sim$ is an equivalence relation.  The geometric boundary operator $\partial$ induces the structure of a chain complex on $C_*(M)$ with addition given by disjoint union.
\end{lemma}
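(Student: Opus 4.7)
The plan is to verify each piece in sequence, leaning heavily on the two preceding lemmas about $Q(M)$ (closure of $\partial$ on $Q(M)$ and the cancellation-style lemma that $\sigma \sqcup \tau \in Q(M)$ with $\tau \in Q(M)$ forces $\sigma \in Q(M)$). Before anything else, I would record two preliminary closure properties of $Q(M)$ that the rest of the argument uses repeatedly: (i) $Q(M)$ is closed under disjoint union, and (ii) $Q(M)$ is closed under orientation reversal. Property (i) holds because the disjoint union of trivial chains is trivial (combine the involutions), and the disjoint union of two degenerate chains is degenerate (take the disjoint union of the auxiliary manifolds $N_i$ as the new witness of small image; boundaries add component-wise). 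Property (ii) holds because the involution witnessing triviality of $\tau$ also witnesses triviality of $-\tau$, and small image and the degeneracy condition are blind to orientation.

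Next I would check the three equivalence-relation axioms. For reflexivity, $\sigma \sqcup -\sigma$ admits the orientation-reversing involution swapping the two copies, so it is trivial and hence in $Q(M)$. Symmetry follows because $\tau \sqcup -\sigma$ is isomorphic to $-(\sigma \sqcup -\tau)$ and $Q(M)$ is closed under orientation reversal by (ii). For transitivity, suppose $\sigma \sqcup -\tau \in Q(M)$ and $\tau \sqcup -\rho \in Q(M)$. By closure (i), their disjoint union $(\sigma \sqcup -\rho) \sqcup (\tau \sqcup -\tau)$ lies in $Q(M)$, and $\tau \sqcup -\tau$ is trivial (hence in $Q(M)$); the cancellation lemma then delivers $\sigma \sqcup -\rho \in Q(M)$, i.e.\ $\sigma \sim \rho$. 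This cancellation step is the main technical moment, but the work has already been done in the previous lemma.

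Once $\sim$ is established, I would show disjoint union descends to a well-defined operation on $C_*(M)$. If $\sigma \sim \sigma'$ and $\tau \sim \tau'$, then by (i) $(\sigma \sqcup -\sigma') \sqcup (\tau \sqcup -\tau') \in Q(M)$, and a rearrangement (isomorphism) identifies this with $(\sigma \sqcup \tau) \sqcup -(\sigma' \sqcup \tau')$, so $\sigma \sqcup \tau \sim \sigma' \sqcup \tau'$. The operation is manifestly associative and commutative up to isomorphism; the empty chain (or any trivial chain) represents the identity; and $-\sigma$ inverts $\sigma$ since $\sigma \sqcup -\sigma$ is trivial. Thus $C_*(M)$ is an abelian group graded by dimension.

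Finally I would descend the boundary. Well-definedness is immediate: if $\sigma \sim \tau$ then $\sigma \sqcup -\tau \in Q(M)$, and by the lemma already proved the geometric boundary preserves $Q(M)$, so $\partial\sigma \sqcup -\partial\tau = \partial(\sigma \sqcup -\tau) \in Q(M)$, i.e.\ $\partial\sigma \sim \partial\tau$. The boundary respects disjoint union on the nose, so it is a group homomorphism. For $\partial^2 = 0$, the earlier lemma shows that $\partial^2\sigma$ is trivial for every chain $\sigma$, so $\partial^2\sigma \in Q(M)$ and hence represents the zero class in $C_*(M)$. This completes the verification that $(C_*(M), \partial)$ is a chain complex.
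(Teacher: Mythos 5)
Your proposal is correct and follows essentially the same route as the paper: transitivity via taking the disjoint union of $\sigma\sqcup-\tau$ and $\tau\sqcup-\rho$ and applying the cancellation lemma to the trivial chain $\tau\sqcup-\tau$, with the chain-complex structure coming from $\partial Q(M)\subset Q(M)$ and the triviality of $\partial^2\sigma$; you simply spell out the closure properties of $Q(M)$ and the well-definedness of addition that the paper leaves implicit. The only detail in the paper's proof you omit is the (immaterial) bookkeeping point that the sum requires choosing an embedding of $P\sqcup Q$ into the fixed Hilbert space $\mathbb{H}$, any two choices differing by a trivial chain.
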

\begin{proof}
Reflexivity and symmetry are clear.  To check transitivity, note that $\sigma \sqcup -\tau \in Q(M)$ and  $\tau \sqcup -\rho \in Q(M)$ imply $\sigma \sqcup -\tau \sqcup \tau \sqcup -\rho \in Q(M)$.  By the previous lemma, $\sigma \sqcup -\rho$ is in $Q(M)$ as desired.    The additive structure is induced by disjoint union: Given $\sigma:P\rightarrow M$ and $\tau:Q\rightarrow M$ we must choose an embedding of $P\sqcup Q$ in our Hilbert space $\mathbb{H}$.  However, any two such choices will be equivalent under our relation since the difference will be isomorphic to a trivial chain. $C_*(M)$ has inverses since $\sigma \sqcup -\sigma$ is trivial for any $\sigma$. Finally, $C_*(M)$ forms a complex since $\partial Q(M)\subset Q(M)$ and $\partial^2 \sigma$ is always trivial.  
\end{proof}
Note that $C_*(M)$ has a natural grading given by the dimension of the chains. 
\begin{definition}
Let $H_*(M)$ denote the homology groups associated to the complex $C_*(M)$.
\end{definition}
Let us observe that cycles with small image are $0$ in $C_*(M)$:
\begin{lemma}
If $\sigma$ has small image and $\partial \sigma \sim 0$, then $\sigma \sim 0$. 
\end{lemma}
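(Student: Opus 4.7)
The plan is to unwind the definitions of $\sim$, $Q(M)$, and degenerate, and show that $\sigma$ itself already satisfies the definition of a degenerate chain, so that $\sigma \in Q(M)$, which is the same as $\sigma \sim 0$.

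First, I would translate the hypothesis $\partial \sigma \sim 0$ into membership in $Q(M)$: by definition of $\sim$, this means $\partial \sigma \sqcup -0 = \partial \sigma$ lies in $Q(M)$. So $\partial \sigma$ is isomorphic to a disjoint union $\alpha \sqcup \beta$, where $\alpha$ is a trivial chain and $\beta$ is a degenerate chain (allowing either to be empty).

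Next, I would observe that by the definition of degenerate, $\beta$ in particular has small image. Therefore $\partial \sigma$ is isomorphic to the disjoint union of a trivial chain ($\alpha$) and a chain with small image ($\beta$). Combined with the assumption that $\sigma$ itself has small image, this is exactly the definition of $\sigma$ being degenerate.

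Finally, since $\sigma$ is degenerate, taking the trivial piece to be empty exhibits $\sigma$ as an element of $Q(M)$, and so $\sigma \sim 0$ by the definition of the equivalence relation. The whole argument is a straightforward chase through the definitions given earlier in the section, with no real obstacle; the only subtlety worth flagging is to be careful that "degenerate" is not circular, but since degeneracy is defined purely in terms of small image plus a condition on $\partial \sigma$, the hypothesis $\partial \sigma \sim 0$ combined with the fact that degenerate chains have small image is precisely what is needed.
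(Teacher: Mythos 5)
Your proposal is correct and is essentially the paper's own argument, just spelled out in more detail: translate $\partial\sigma \sim 0$ into $\partial\sigma \in Q(M)$, note that the degenerate part has small image so $\partial\sigma$ is a union of a trivial chain and a chain with small image, and conclude that $\sigma$ is degenerate, hence in $Q(M)$, hence $\sigma \sim 0$. No gaps.
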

\begin{proof}
We have that $\partial\sigma \in Q(M)$. Therefore, $\sigma$ is degenerate and hence also in $Q(M)$. 
\end{proof}

As will be proved in subsequent sections, the homology groups introduced above are isomorphic to the singular homology of $M$. 

\section{Cohomology}
There is a variant of the groups that is based on proper rather than compact chains.  These will give a geometric version of cohomology.  
\begin{definition}
A \term{cochain} is a smooth oriented map $\sigma:P\rightarrow M$ where $P$ is a smooth manifold with corners and $\sigma$ is a proper map.
\end{definition}
Not to confuse the two distinct notions of orientation we will say that cochains are \term{cooriented}.
Given $\sigma:P\rightarrow M$ and $\tau:P'\rightarrow M$ and a diffeomorphism $f:P\rightarrow P'$ such that $\sigma\circ f =\tau$, we get an induced isomorphism $det(\sigma)\rightarrow f^*(det(\tau))$.  Therefore, we can ask whether this map preserves orientations:  
\begin{definition}
A cochain $\sigma: P\rightarrow M$ is said to be \term{trivial} if there exists a diffeomorphism $f:P\rightarrow P$ with $\sigma\circ f=\sigma$ which changes the orientation of the map.
\end{definition}
The rest of the definitions go through as before and give rise to a chain complex with the boundary operator given by geometric boundary.  
\begin{definition}
Let $C^*(M)$ be the chain complex of proper maps. We denote its homology groups by $H^*(M)$.
\end{definition}
$C^*(M)$ is graded by the codimension of the map $\sigma:P \rightarrow M$.  With this convention, $\partial$ has degree 1.  As we will show later,  $H^*(M)$ is a geometric realization of the singular cohomology of $M$.\\\\
\textbf{Remark.}  In the case that $M$ is an infinite dimensional manifold, cochains are given by proper Fredholm maps $\sigma:P\rightarrow M$, where $P$ is necessarily infinite dimensional.  The rest of the construction proceeds with  little change.   

\section{Transversality and Pairing}
In this section we use standard results on transversality to obtain a pairing:
$$\cap: H_a(M)\otimes H^b(M)\rightarrow H_{a-b}(M)$$
The arguments are quite standard, but we need pay attention to the fact that perturbations should preserve trivial and degenerate chains. 

\begin{definition}
Maps $\sigma:P\rightarrow M$, $\tau:Q\rightarrow M$ are in \term{general position} if they satisfy the following conditions:  \\\\
1. They are transverse on each open stratum.  \\
2. If $P_i$ a component of an open stratum of $P$ with small image, then there exists a smooth map $g:T\rightarrow M$ of smaller dimension covering the image of $P_i$ which is transverse to all the strata of $\tau$. \\
3. If $Q_i$ a component of an open stratum of $Q$ with small image, then there exists a smooth map $g':T'\rightarrow M$ of smaller dimension covering the image of $Q_i$, which is transverse to all the strata of $\sigma$.   
\end{definition}
Now we exhibit enough perturbations to ensure transversality while preserving degenerate chains.  To this end, we have the following basic result:
\begin{lemma}
There exists a smooth connected manifold $\mathfrak{P}$ and a smooth map $$F: M\times \mathfrak{P} \rightarrow M$$ such that $D_2F$ is surjective at all points.  We assume $F(\cdot,p)=Id$ on $M$ for some $p\in \mathfrak{P}$.
\end{lemma}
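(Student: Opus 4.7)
The plan is to produce $\mathfrak{P}$ as a finite-dimensional Euclidean space by using a Whitney embedding of $M$ together with a tubular neighborhood retraction. Concretely, embed $M$ as a closed submanifold of some $\Ar^N$ (possible since $M$ is paracompact and second countable). Let $T\subset \Ar^N$ be an open tubular neighborhood of $M$ together with a smooth retraction $\pi:T\rightarrow M$; the map $\pi$ is a submersion, so $D\pi_y:\Ar^N\rightarrow T_{\pi(y)}M$ is surjective for every $y\in T$.

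Since $M$ may be non-compact, the "width" of the tubular neighborhood need not be uniform, so I would pick a smooth positive function $\epsilon:M\rightarrow(0,\infty)$ with the property that the closed ball of radius $\epsilon(x)$ about $x$ in $\Ar^N$ is contained in $T$ for every $x\in M$. Such an $\epsilon$ exists by a standard partition-of-unity argument on $M$. Next, fix once and for all a smooth diffeomorphism $\beta:\Ar^N\rightarrow B_1(0)\subset \Ar^N$ with $\beta(0)=0$ and $D\beta(0)=\mathrm{Id}$; for instance $\beta(v)=v/\sqrt{1+|v|^2}$.

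Now take $\mathfrak{P}=\Ar^N$ with basepoint $p=0$, and define
\[
F:M\times \mathfrak{P}\longrightarrow M,\qquad F(x,v)=\pi\bigl(x+\epsilon(x)\,\beta(v)\bigr).
\]
This is well-defined and smooth because $|\epsilon(x)\beta(v)|<\epsilon(x)$, so the argument of $\pi$ stays inside $T$. Clearly $F(x,0)=\pi(x)=x$, giving the required normalization. For surjectivity of $D_2F$, I would compute
\[
D_2F_{(x,v)}(w)=D\pi_{x+\epsilon(x)\beta(v)}\bigl(\epsilon(x)\,D\beta_v(w)\bigr).
\]
Since $\beta$ is a diffeomorphism of $\Ar^N$ onto $B_1(0)$, $D\beta_v$ is a linear isomorphism of $\Ar^N$; since $\epsilon(x)>0$ and $\pi$ is a submersion, the composition is surjective onto $T_{F(x,v)}M$ for every $(x,v)$. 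Finally, $\mathfrak{P}=\Ar^N$ is a smooth connected manifold.

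The only real obstacle is the non-compactness of $M$, which forces the perturbation parameter to be tempered by $\epsilon(x)$ to keep the construction inside the tubular neighborhood; everything else is a direct verification. Should one prefer an intrinsic construction, essentially the same argument works using the exponential map of an auxiliary complete Riemannian metric in place of $\pi$ and $\beta$, but the embedding version is the shortest route to a finite-dimensional $\mathfrak{P}$.
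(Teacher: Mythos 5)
Your construction is correct and complete: embedding $M$ as a closed submanifold of $\Ar^N$, tempering the perturbation by a smooth positive $\epsilon(x)$ so that $x+\epsilon(x)\beta(v)$ stays in the tubular neighborhood, and composing with the submersion $\pi$ does give a smooth $F$ with $F(\cdot,0)=\mathrm{Id}$ and $D_2F$ surjective everywhere, with $\mathfrak{P}=\Ar^N$ connected. For comparison, the paper offers no proof at all here --- the lemma is stated as a ``basic result'' feeding into the subsequent Sard-type transversality arguments --- so your argument simply supplies the standard justification the paper takes for granted. The only caveat worth flagging is your parenthetical appeal to second countability of $M$: the paper only says ``paracompact,'' so you are implicitly invoking the usual convention (countably many components) under which the Whitney embedding applies; with that convention granted, every step checks out, and your remark that the same argument can be run intrinsically via the exponential map of a complete metric is also accurate.
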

Using standard transversality arguments we have the following:
\begin{lemma}
Given $\sigma:P \rightarrow M$ and $\tau: P\rightarrow M$ there exists $a\in \mathfrak{P}$ such that $F(\cdot,a)\circ \sigma$ and $\tau$ are in general position.  Given any two such $a,b\in \mathfrak{P}$, there exists a path $\gamma:[0,1]\rightarrow \mathfrak{P}$ such that $\gamma(0)=a$, $\gamma(1)=b$ and the map $$\Sigma:P\times [0,1]\rightarrow M$$ given by $\Sigma(p,t)=F(\sigma(p),\gamma(t))$ is transverse to $\tau$.
\end{lemma}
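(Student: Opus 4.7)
The plan is to combine the surjectivity of $D_2F$ from the previous lemma with the standard parametric transversality (Sard–Smale) theorem, applied once per pair of strata, and then once more with paths as parameters.

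For the first assertion, I define the universal perturbation
\[
\Phi:P\times\mathfrak{P}\to M,\qquad \Phi(p,a)=F(\sigma(p),a).
\]
Because $D_2F$ is surjective everywhere, $\Phi$ is a submersion and is automatically transverse to every locally closed stratum of $\tau:Q\to M$. Restricting to each open stratum $P^k$ and applying parametric transversality yields a residual set of $a\in\mathfrak{P}$ for which $F(\cdot,a)\circ\sigma\big|_{P^k}$ is transverse to $\tau\big|_{Q^l}$. The family of stratum pairs $(P^k,Q^l)$ is countable, so the intersection of these residual sets is still residual; picking any such $a$ gives condition 1 of general position.

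To secure conditions 2 and 3 I iterate the argument on the auxiliary covers. For each small-image component $P_i$ of $P$ with cover $g:T\to M$, the perturbed image is covered by $(t,a)\mapsto F(g(t),a)$, a map whose $\mathfrak{P}$-differential is again surjective; parametric transversality gives a residual set of $a$'s such that $F(\cdot,a)\circ g$ is transverse to every stratum of $\tau$. Symmetrically, for each small-image component $Q_j$ with fixed cover $g':T'\to M$, parametric transversality applied to the pair $(g',F(\cdot,a)\circ\sigma\big|_{P^k})$ produces a residual set for which $g'$ is transverse to every stratum of $F(\cdot,a)\circ\sigma$. Intersecting all these (countably many) residual sets produces the desired $a$ satisfying all three conditions.

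For the second assertion, I let $\mathcal{C}(a,b)$ denote a Banach manifold of paths $\gamma:[0,1]\to\mathfrak{P}$ (e.g. of sufficiently high Sobolev class) with $\gamma(0)=a$ and $\gamma(1)=b$; connectedness of $\mathfrak{P}$ makes $\mathcal{C}(a,b)$ nonempty. Consider
\[
\widetilde{\Phi}:P\times[0,1]\times\mathcal{C}(a,b)\to M,\qquad (p,t,\gamma)\mapsto F(\sigma(p),\gamma(t)).
\]
For each interior $t\in(0,1)$ the value $\gamma(t)$ can be varied freely by a bump-function deformation, so combined with surjectivity of $D_2F$ the map $\widetilde\Phi$ is a submersion on $P\times(0,1)\times\mathcal{C}(a,b)$. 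Parametric transversality then yields a residual set of paths $\gamma$ for which $\Sigma(p,t)=F(\sigma(p),\gamma(t))$ is transverse to every stratum of $\tau$ on the interior $P\times(0,1)$. On the endpoints $t=0,1$ transversality holds automatically because $a$ and $b$ were chosen to be in general position. Running the same path-of-parameters argument for each small-image cover $g,g'$ propagates conditions 2 and 3 across the cylinder.

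The chief obstacle is organizational rather than geometric: one must invoke Sard–Smale once per pair of strata, including the auxiliary small-image covers, and be careful that transversality on $P\times[0,1]$ holds at the boundary $t\in\{0,1\}$ where no perturbation freedom is available. This forces the endpoint general position conditions to be arranged \emph{before} the path argument, which is precisely the role of the first assertion.
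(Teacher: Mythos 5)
Your proposal is correct and follows essentially the same route as the paper: the paper's proof is exactly the "standard transversality argument," using the surjectivity of $D_2F$ to make the universal perturbation a submersion, applying Sard's theorem stratum by stratum (including the auxiliary covers $g$, $g'$ for small-image components), and intersecting the countably many residual sets; the path statement is handled by the same parametric argument with paths rel endpoints as parameters. Your write-up merely supplies the details the paper leaves implicit, including the correct observation that transversality on the endpoint strata of $P\times[0,1]$ is furnished by the prior choice of $a$ and $b$.
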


\begin{proof}
This is a standard transversality argument.  We need to ensure transversality for each open stratum as well as for some choice of maps $g_i:T_i\rightarrow M$ covering components with small image.  Since each chain has at most a countable number of components in each stratum, we can ensure simultaneous transversality by an application of Sard's theorem.  
\end{proof}
\begin{definition}
Given $\sigma$ and $\tau$ in general position, let $\sigma \cap \tau =(\sigma \times \tau) ^{-1}(\Delta)$ where $\Delta$ is the diagonal in $M\times M$.
\end{definition}
Note that $\sigma \cap \tau$ is oriented and comes with a map to $M$. Also, since $\sigma$ has compact domain and $\tau$ is proper, $\sigma \cap \tau$ is compact.
The need for such specific perturbations is to ensure taking intersections preserves degenerate and trivial chains:
\begin{lemma}
Assume $\sigma$ and $\tau$ are in general position.  If $\sigma$ is in $Q(M)$, so is $\sigma \cap \tau$.  
\end{lemma}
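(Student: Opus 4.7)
The plan is to use the definition of $Q(M)$ to reduce to two separate cases. Write $\sigma \cong \alpha \sqcup \beta$ with $\alpha$ trivial and $\beta$ degenerate. Since fiber-product intersection distributes over disjoint union,
\[
\sigma \cap \tau \;\cong\; (\alpha \cap \tau) \sqcup (\beta \cap \tau),
\]
so it suffices to verify that $\alpha \cap \tau$ is trivial and that $\beta \cap \tau$ is degenerate.

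For the trivial part, let $f\colon P_\alpha \to P_\alpha$ be an orientation-reversing diffeomorphism with $\alpha \circ f = \alpha$. Realize $\alpha \cap \tau$ as the fiber product $\{(p,q) \in P_\alpha \times Q : \alpha(p) = \tau(q)\}$ and set $\tilde f(p,q) = (f(p), q)$. The identity $\alpha \circ f = \alpha$ makes $\tilde f$ a well-defined self-diffeomorphism intertwining the map to $M$. Since the orientation of the fiber product is built from that of $P_\alpha$ and the coorientation of $\tau$ via the exact sequences of Lemma \ref{Lem8}, and depends linearly on the orientation of $P_\alpha$, the map $\tilde f$ reverses the orientation of $\alpha \cap \tau$; hence $\alpha \cap \tau$ is trivial.

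For the degenerate part, first verify the small-image condition. By hypothesis $\beta(P_\beta) \subset g(T)$ with $\dim T < \dim P_\beta$, and by the general-position assumption we may assume $g$ is transverse to every stratum of $\tau$. Then the image of $\beta \cap \tau$ lies in the image of $g \cap \tau$, whose source has dimension
\[
\dim T + \dim Q - \dim M \;<\; \dim P_\beta + \dim Q - \dim M \;=\; \dim(\beta \cap \tau),
\]
so $\beta \cap \tau$ has small image. Next, a local Leibniz identity
\[
\partial(\beta \cap \tau) \;\cong\; (\partial \beta \cap \tau) \sqcup \pm(\beta \cap \partial \tau),
\]
verified in a chart $V \times [0,1)^k$ in the style of the first lemma of Section 2.1, splits the boundary into pieces we already control: decomposing $\partial \beta \cong \alpha' \sqcup \beta'$ with $\alpha'$ trivial and $\beta'$ small, the triviality argument gives $\alpha' \cap \tau$ trivial, and the same dimension count shows $\beta' \cap \tau$ and $\beta \cap \partial \tau$ have small image. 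Thus $\partial(\beta \cap \tau) \in Q(M)$, so $\beta \cap \tau$ is degenerate, completing the proof.

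The main obstacle is the orientation bookkeeping in the triviality step: one must trace through the pullback-determinant isomorphism and the exact sequences of Lemma \ref{Lem8} to ensure that an orientation-reversal of $\alpha$ is not absorbed by auxiliary parity factors coming from the dimensions of $Q$ or $M$, but genuinely produces an orientation-reversal of the intersection. A secondary point is that the Leibniz splitting of the boundary must respect the stratification along corners, but this is a local computation analogous to the hypersurface cutting lemma already in the paper.
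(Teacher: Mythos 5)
Your proof is correct and follows essentially the same route as the paper's: split $\sigma$ into a trivial part and a degenerate part, transfer the orientation-reversing self-diffeomorphism to the fiber product, use the covering map $g$ (transverse to the strata of $\tau$ by the general position hypothesis) to get $g\cap\tau$ as the small-image cover, and invoke the Leibniz rule $\partial(\sigma\cap\tau)=\partial\sigma\cap\tau\sqcup\pm\,\sigma\cap\partial\tau$ to handle the boundary. The only cosmetic difference is that the paper phrases the trivial and small-image cases in terms of the perturbed chain $F(\cdot,a)\circ\sigma$, while you work directly with $\sigma$ already in general position, and you spell out the orientation and dimension bookkeeping that the paper leaves implicit.
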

\begin{proof}
If $\sigma$ has a orientation reversing self-diffeomorphism $f$ such that $\sigma \circ f=\sigma$, then $$F(\cdot,a)\circ \sigma \circ f=F(\cdot,a)\circ \sigma $$  Therefore, $\sigma \cap \tau$ has an induced orientation reversing diffeomorphism as well. If $\sigma$ has small image contained in $g:T\rightarrow M$, then $F(\cdot,s)\circ \sigma $ has image contained in $F(\cdot,s)\circ g$ as long as $g$ is transverse to $\tau$. Thus, $\sigma \cap \tau$ has small image contained in $g \cap \tau$. Furthermore, if we assume $\sigma$ is degenerate, we have $\sigma \cap \tau$ has small image and $$\partial (\sigma \cap \tau)=\partial \sigma \cap \tau  \sqcup \pm \sigma \cap \partial \tau $$ is a union of s trivial chain and a chain with small image.      
\end{proof}

\begin{theorem}
Transverse intersections induce a well defined map: $$\cap: H_a(M)\otimes H^b(M)\rightarrow H_{a-b}(M)$$
\end{theorem}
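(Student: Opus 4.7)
The plan is to define the pairing on chain-level representatives by perturbing into general position and taking the transverse intersection, then check that the result descends to homology. Given $[\sigma]\in H_a(M)$ represented by $\sigma:P\to M$ and $[\tau]\in H^b(M)$ represented by $\tau:Q\to M$, I would apply the preceding perturbation lemma to produce $p\in\mathfrak{P}$ with $\sigma':=F(\cdot,p)\circ\sigma$ in general position with $\tau$, and set $[\sigma]\cap[\tau]:=[\sigma'\cap\tau]$. Dimension counting gives $\dim(\sigma'\cap\tau)=a-b$; since $\sigma'$ has compact domain and $\tau$ is proper, $\sigma'\cap\tau$ is a compact oriented chain in $C_{a-b}(M)$. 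The boundary formula from the preceding lemma,
$$\partial(\sigma'\cap\tau)=\partial\sigma'\cap\tau\sqcup\pm\sigma'\cap\partial\tau,$$
combined with $\partial\sigma'=\partial\sigma\in Q(M)$ and $\partial\tau\in Q(M)$, shows via the preservation lemma that $\partial(\sigma'\cap\tau)\in Q(M)$, so $\sigma'\cap\tau$ is a cycle.

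For well-definedness, I would first show independence of the perturbation parameter. Given two valid parameters, the perturbation lemma furnishes a path $\gamma$ and a homotopy $\Sigma:P\times[0,1]\to M$ transverse to $\tau$, and $\Sigma\cap\tau$ is a compact chain realizing a chain-level homotopy. Its boundary, computed via the product boundary of $P\times[0,1]$ and the intersection boundary formula, equals the signed difference of the two intersections modulo the terms $(\Sigma|_{\partial P\times[0,1]})\cap\tau$ and $\Sigma\cap\partial\tau$, both of which lie in $Q(M)$ by the preservation lemma applied to $\partial\sigma\in Q(M)$ and $\partial\tau\in Q(M)$. For independence of the representative of $[\sigma]$, if $\sigma_1\sqcup-\sigma_2\in Q(M)$, I would perturb the combined chain into general position with $\tau$ and apply the preservation lemma directly to conclude $\sigma_1\cap\tau\sim\sigma_2\cap\tau$. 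Independence under change of the representative of $[\tau]$ is the analogous symmetric statement obtained by perturbing $\tau$ in place of $\sigma$.

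The main obstacle will be justifying the symmetric version of the preservation lemma needed to change the representative of $[\tau]$, since the stated lemma perturbs only $\sigma$ while keeping $\tau$ fixed. The cleanest resolution is to observe that for each $p\in\mathfrak{P}$ the map $F(\cdot,p):M\to M$ is a diffeomorphism (at least for $p$ in a neighborhood of the base point where $F(\cdot,p)=\mathrm{Id}$, which is all that is needed for the Sard argument), so intersecting $F(\cdot,p)\circ\sigma$ with $\tau$ is the same as intersecting $\sigma$ with $F(\cdot,p)^{-1}\circ\tau$; this reduces the $\tau$-side question to the already-handled $\sigma$-side. A secondary technical point is that the perturbation lemma must be mildly strengthened so that the cobordism $\Sigma$ is in general position with $\tau$, not merely transverse, which follows by the same Sard's theorem argument applied simultaneously to all strata of $\Sigma$ and to smooth covers of its small-image components.
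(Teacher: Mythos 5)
Your proposal sets up the chain-level definition correctly (perturb $\sigma$ into general position, intersect, check via the boundary formula and the preservation lemma that the result is a cycle, and compare two perturbation parameters through the transverse homotopy $\Sigma$), but it misses the step that is actually the heart of the matter: descending to homology in the first variable. You only treat the case $\sigma_1\sqcup-\sigma_2\in Q(M)$, which is equality in $C_*(M)$; two representatives of the same class in $H_a(M)$ differ by a boundary, $\sigma_1\sqcup-\sigma_2\sim\partial\rho$, and you must show that a nullhomologous cycle intersects $\tau$ in a nullhomologous cycle. This is not a routine application of the preservation lemma, because the bounding chain $\rho$ need not be transverse to $\tau$, and you cannot simply perturb $\rho$: a perturbation $F(\cdot,1)\circ\rho$ has boundary the perturbed $\sigma$, not the transverse representative you have already fixed, so $\bigl(F(\cdot,1)\circ\rho\bigr)\cap\tau$ does not bound $\sigma\cap\tau$. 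The paper resolves exactly this point with a cylinder trick: writing $\sigma_2(p,t)=F(\sigma_1(p),t)$ for the trace of the homotopy of the bounding chain $\sigma_1$, one replaces $\sigma_1$ by $\sigma_1+\partial\sigma_2$, which still has boundary homologous to $\sigma$ and is equivalent to $\sigma_3\sqcup\sigma_2(\cdot,1)$, where $\sigma_3$ is the cylinder over $\sigma$ and $\sigma_2(\cdot,1)$ is the perturbed $\sigma_1$; for generic $F$ both pieces are transverse to $\tau$, and intersecting this corrected bounding chain with $\tau$ exhibits $\sigma\cap\tau$ as a boundary. Without this (or an equivalent device), your map is only defined on cycles modulo $Q(M)$, not on $H_a(M)$.

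A secondary point: your reduction of the $\tau$-side well-definedness to the $\sigma$-side assumes each $F(\cdot,p)$ is a diffeomorphism of $M$, but the perturbation lemma only provides $D_2F$ surjective and $F(\cdot,p_0)=\mathrm{Id}$ for one parameter; on a noncompact $M$ nothing forces $F(\cdot,p)$ to be invertible, so this shortcut needs either a stronger choice of $\mathfrak{P}$ (e.g.\ a family of diffeomorphisms) or, better, the same cylinder argument run with a bounding cochain for $\tau$, keeping the already-transverse $\sigma$ fixed.
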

\begin{proof}
Given a cycle $\sigma:P\rightarrow M$ and a cocycle $\tau:Q\rightarrow M$, we will use a generic homotopy $$F:M\times [0,1] \rightarrow M$$ to obtain a representative $F(\cdot, 1)\circ \sigma$ of the homology class of $\sigma$ that is transverse to $\tau$.  Suppose now that $\sigma \sim \partial \sigma_1$ where $\sigma$ is transverse to $\tau$.  If $\sigma_1$ is also transverse to $\tau$, then $\partial (\sigma_1 \cap \tau) \sim \sigma\cap \tau $ as desired.  If not, pick a generic homotopy $F$ as above and let $$\sigma_2(p,t)=F(\sigma_1(p),t)$$  Since $\partial(\sigma_1+\partial \sigma_2 )\sim \sigma$, we need only check that $\sigma_1+\partial \sigma_2 $ is equivalent to a transverse chain.  Let $$\sigma_3(p,t)=F(\sigma(p),t)$$  By construction,  $\sigma_1+\partial \sigma_2  \sim \sigma_3\sqcup \sigma_2(\cdot,1)$.  If we take a generic $F$, $\sigma_3$ and $\sigma_2(\cdot,1)$ will be transverse to $\tau$ as desired.  

\end{proof}

By a similar technique applied to proper maps , we obtain a geometric version of the cup product: $$H^a(M)\otimes H^b(M)\rightarrow H^{a+b}(M)$$

\section{Eilenberg-Steenrod Axioms}
In this section we  verify that our theory satisfies the Eilenberg-Steenrod axioms.  We begin by computing the homology of a point:
\begin{theorem}
$H_k(pt)=0$ when $n\neq 0$ and $H_0(pt)\cong \mathbb{Z}$.
\end{theorem}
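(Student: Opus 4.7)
The plan is to split into cases $k \geq 1$ and $k = 0$, exploiting the $0$-dimensionality of $pt$.

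For $k \geq 1$: every chain $\sigma:P\to pt$ with $\dim P = k$ automatically has small image, since the identity $\mathrm{id}:pt\to pt$ is a smooth map from a $0$-dimensional manifold whose image contains $\sigma(P)$ and $0<k$. If $\sigma$ is a cycle, the lemma immediately preceding the theorem (``cycles with small image are null'') gives $\sigma\sim 0$ in $C_*(pt)$, so $H_k(pt)=0$.

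For $k=0$: a $0$-chain is a finite compact oriented $0$-manifold mapped constantly to $pt$, i.e.\ a finite set of points each carrying a sign from the orientation. Since a $0$-manifold has no boundary strata, $\partial\sigma$ is the empty chain, so every $0$-chain is automatically a cycle. I would define the signed-count map $\varepsilon:\{0\text{-chains}\}\to\mathbb{Z}$ (number of $+$ points minus number of $-$ points) and verify that it descends to a well-defined isomorphism $H_0(pt)\to\mathbb{Z}$. Well-definedness on $C_0(pt)$ amounts to showing $\varepsilon$ vanishes on $Q(pt)$ in degree $0$: no $N$ can satisfy $\dim N<0$, so degenerate $0$-chains must have empty domain; and a trivial $0$-chain admits an orientation-reversing self-bijection, which exists precisely when the numbers of positively- and negatively-oriented points agree, i.e.\ when $\varepsilon=0$.

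To descend $\varepsilon$ to homology I would check that boundaries from $C_1(pt)$ lie in the kernel: a compact oriented $1$-manifold with corners is a disjoint union of circles and closed intervals, and each interval contributes two boundary points of opposite sign under the outward-normal-first convention, so $\varepsilon(\partial\tau)=0$ for every $1$-chain $\tau$. Surjectivity of the induced map $H_0(pt)\to\mathbb{Z}$ is realized by $n$ signed copies of $pt$, and injectivity reduces to the trivial-iff-signed-count-zero claim above.

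The argument is essentially routine given the preparatory lemmas; the only point requiring care is the identification of ``trivial $0$-chain'' with ``signed count zero'', which relies on unpacking the orientation convention on $0$-manifolds (an orientation is a sign at each point, and an orientation-reversing self-bijection must swap $+$-points with $-$-points, forcing the two populations to have equal size).
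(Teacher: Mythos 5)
Your proof is correct and follows essentially the same route as the paper: for $k\geq 1$ every chain to a point has small image and the lemma that small-image cycles vanish in $C_*(pt)$ gives $H_k(pt)=0$, while in degree $0$ the signed count of oriented points identifies $H_0(pt)$ with $\mathbb{Z}$. The paper merely sketches the degree-$0$ case (``$0$-dimensional cycles are finite collections of oriented points, $1$-dimensional cycles are compact intervals''); your write-up fills in exactly the details behind that sentence, including the observation that degenerate $0$-chains are empty and that triviality is equivalent to vanishing signed count.
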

\begin{proof}
Since any  map $\sigma:P\rightarrow pt$ is constant any such chain has small image when $dim(P)>0$.  If $\sigma$ is a cycle, we have $\partial \sigma$ a union of trivial and degenerate chains. Therefore, $\sigma$ is degenerate and hence $0$ in $C_*(pt)$.  To verify the claim in dimension zero note that $0$ dimensional cycles are finite collections of (oriented) points, while 1-dimensional cycles are compact intervals.
\end{proof}
Next, we verify a version of functoriality:
\begin{theorem}
Given a smooth map $f:M \rightarrow N$ between smooth manifolds, we have an induced chain map $f_*:C_*(M)\rightarrow C_*(N)$.  $f_*$ extends to a functor from the category of smooth finite dimensional manifolds and smooth maps to the category of chain complexes over $\mathbb{Z}$.
\end{theorem}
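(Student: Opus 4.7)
The plan is to define $f_*$ on representatives by post-composition: for a chain $\sigma : P \to M$ we set $f_*\sigma = f \circ \sigma : P \to N$. Since $P$ is still a compact oriented manifold with corners embedded in $\mathbb{H}$ and $f \circ \sigma$ is smooth, this is a chain in $C_*(N)$. The boundary operator $\partial$ acts geometrically on the domain, so $\partial(f \circ \sigma) = f \circ \partial \sigma = f_*(\partial \sigma)$, which gives the chain-map property at the level of representatives. Functoriality at this level is tautological: $(g \circ f) \circ \sigma = g \circ (f \circ \sigma)$ and $\mathrm{id}_M \circ \sigma = \sigma$.

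The content of the theorem is that $f_*$ descends to the equivalence classes defining $C_*(M)$, which amounts to checking that $f$ sends $Q(M)$ into $Q(N)$. I would verify this on each type of element of $Q(M)$ separately. If $\sigma : P \to M$ is trivial, witnessed by an orientation-reversing diffeomorphism $\phi : P \to P$ with $\sigma \circ \phi = \sigma$, then $(f \circ \sigma) \circ \phi = f \circ \sigma$, so $f_*\sigma$ is again trivial. If $\sigma$ has small image with $\sigma(P) \subset g(T)$ and $\dim T < \dim P$, then $(f \circ \sigma)(P) \subset (f \circ g)(T)$ and $f \circ g$ still has a domain of dimension less than $\dim P$, so $f_*\sigma$ has small image. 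For $\sigma$ degenerate, combining these observations with $\partial (f_*\sigma) = f_*(\partial \sigma)$ shows that $\partial(f_*\sigma)$ decomposes as the disjoint union of a trivial chain and a chain with small image, so $f_*\sigma$ is degenerate. Hence $f_*Q(M) \subset Q(N)$, and since disjoint union is preserved by post-composition, $\sigma \sqcup (-\tau) \in Q(M)$ implies $f_*\sigma \sqcup (-f_*\tau) \in Q(N)$, so the induced map $f_* : C_*(M) \to C_*(N)$ is well defined.

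Finally, to promote $f_*$ to a functor, I would observe that the induced maps on $C_*$ inherit $(g \circ f)_* = g_* \circ f_*$ and $(\mathrm{id}_M)_* = \mathrm{id}_{C_*(M)}$ directly from the representative level, since both the equivalence relation and the disjoint-union addition are preserved by post-composition. The main obstacle, such as it is, is bookkeeping the three separate conditions (triviality, small image, degeneracy) to confirm stability under post-composition by an arbitrary smooth $f$; the smallness requirement $\dim T < \dim P$ is the one that most obviously needs to be checked, and it is automatic because post-composition does not change the domains involved, only the target manifold.
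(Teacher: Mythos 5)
Your proposal is correct and follows essentially the same route as the paper: post-composition preserves trivial chains (with the same orientation-reversing diffeomorphism) and small image (via $f\circ g$), hence preserves $Q(M)$, and functoriality is tautological at the representative level. The paper's own proof is just a terser statement of these same two observations, leaving the degenerate case and the descent to equivalence classes implicit, which you spell out.
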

\begin{proof}
It suffices to note that the image of a trivial chain is trivial (with the same orientation reversing diffeomorphism) and given a map $\sigma$ with small image contained in $g:T\rightarrow M$, we have $f \circ \sigma$ has image contained in $f\circ g:T\rightarrow N$.
\end{proof}
\begin{theorem}
Given a smooth homotopy $F:M\times [0,1] \rightarrow N$, we have that $F_*(\cdot,0)$ is chain homotopic to $F_*(\cdot,1)$.
\end{theorem}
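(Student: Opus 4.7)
The plan is to construct an explicit geometric chain homotopy. Given a chain $\sigma:P\to M$, choose an embedding of $P\times[0,1]$ in $\mathbb{H}$ (any two such choices differ by a trivial chain, so the construction is well defined on equivalence classes) and define
$$h(\sigma):P\times[0,1]\to N,\qquad h(\sigma)(p,t)=F(\sigma(p),t).$$
This raises the grading by one, and I will show that $h$ descends to $C_*(M)\to C_{*+1}(N)$ and satisfies $\partial h+h\partial=F_*(\cdot,1)-F_*(\cdot,0)$ up to trivial and degenerate chains.

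The first step is to verify that $h$ preserves $Q(M)$. If $\sigma$ is trivial via an orientation reversing diffeomorphism $f:P\to P$, then $f\times \mathrm{id}_{[0,1]}$ is orientation reversing on $P\times[0,1]$ and intertwines $h(\sigma)$ with itself, so $h(\sigma)$ is trivial. If $\sigma$ has small image, covered by $g:T\to M$ of strictly smaller dimension, then $h(\sigma)$ factors through $(t,x)\mapsto F(g(x),t)$ whose source has dimension $\dim T+1<\dim P+1$, so $h(\sigma)$ has small image. For degenerate $\sigma$, apply the boundary formula below to check that $\partial h(\sigma)$ decomposes as trivial plus small image.

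Next I would establish the key identity. Since $\partial[0,1]$ is one point of each sign, the oriented boundary of the manifold with corners $P\times[0,1]$ decomposes as
$$\partial(P\times[0,1])=(\partial P\times[0,1])\;\sqcup\;(-1)^{\dim P}\bigl(P\times\{1\}-P\times\{0\}\bigr),$$
using the outward normal first convention (this is immediate from the local product structure established in Section 2). Pushing forward by $F\circ(\sigma\times\mathrm{id})$ and observing that $F(\cdot,0)\circ\sigma=\sigma$ with $F$ the identity at $p$, and $F(\cdot,1)\circ\sigma=F_*(\cdot,1)(\sigma)$, yields (with the standard sign)
$$\partial h(\sigma)\;=\;h(\partial\sigma)\;\sqcup\;\pm\bigl(F_*(\cdot,1)(\sigma)-F_*(\cdot,0)(\sigma)\bigr)$$
as oriented chains in $N$. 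Rearranging gives the desired chain homotopy identity modulo $Q(N)$, and hence in $C_*(N)$.

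The main technical obstacle I anticipate is bookkeeping of orientations: verifying that the outward-normal-first convention on $\partial(P\times[0,1])$ really reproduces the signs $\pm(F_*(\cdot,1)-F_*(\cdot,0))$ rather than their negatives or some mixed version, and confirming that any residual discrepancy between two choices of embedding of $P\times[0,1]$ into $\mathbb{H}$ is absorbed into a trivial chain as in the argument for additive inverses in Section 3. Once signs are settled, the chain homotopy identity is simply the oriented boundary formula combined with the verification that $h$ preserves $Q(M)$, and chain homotopy of $F_*(\cdot,0)$ and $F_*(\cdot,1)$ follows.
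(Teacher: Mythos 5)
Your construction is exactly the paper's: the paper defines the chain homotopy $H(\sigma):P\times[0,1]\to N$, $H(\sigma)(p,t)=F(\sigma(p),t)$, and records the same boundary identity $\partial H(\sigma)=(-1)^{|P|}(F(\sigma,1)-F(\sigma,0))+H(\partial\sigma)$, so your proposal is correct and takes essentially the same route (your added verification that $h$ preserves trivial and small-image chains is the implicit well-definedness check). One stray remark --- that $F(\cdot,0)\circ\sigma=\sigma$ ``with $F$ the identity at $p$'' --- is neither true for a general homotopy $F:M\times[0,1]\to N$ nor needed, since the identity you want involves $F_*(\cdot,0)(\sigma)$, not $\sigma$ itself.
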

\begin{proof}
The chain homotopy $H$ is defined as follows.  Given a chain $\sigma:P\rightarrow M$ let $H(\sigma):P \times [0,1] \rightarrow M$ be the map with  $H(\sigma)(p,t)=F(\sigma(p),t)$.  We have $\partial H(\sigma)=(-1)^{|P|}(F(\sigma,1)-F(\sigma,0))+H(\partial \sigma)$.
\end{proof}
In the case $f$ is not smooth one can still define a map on the level of homology.  Indeed, one can approximate any continuous $f$ by a smooth $f'$.  Such a choice is unique up to smooth homotopy.  Therefore, the map on homology is still well defined as follows from the proof of the homotopy axiom.  \\\\
\textbf{Remark.}  The restriction that $M$ is smooth is not significant.  Since any finite CW complex $X$ has the homotopy type of a finite dimensional manifold (possibly noncompact), we can define our homology groups for such $X$ by choose a homotopy equivalent smooth model for $X$.\\\\
 Given an open set $U\subset M$ we have the exact sequence: $$0\rightarrow C_*(U)\rightarrow C_*(M)\rightarrow C_*(M)/C_*(U)\rightarrow 0 $$  We define $C_*(M,U)$ as $C_*(M)/C_*(U)$ and obtain the long exact sequence $$\cdot \rightarrow H_*(U)\rightarrow H_*(M)\rightarrow H_*(M,U) \rightarrow \cdots $$
Finally, we verify a version of the  excision axiom.  Let $U\subset M$ be an open subset and $K\subset U$ be compact set.  For excision, as well as later applications, we will need to cut cycles into pieces without changing the homology class.  Consider a smooth function $f:M\rightarrow \Ar$.   Let $$\sigma^+=\sigma_{|(f\circ \sigma)^{-1}[p,\infty)}$$ $$\sigma^-=\sigma_{|(f\circ \sigma)^{-1}(-\infty,p]}$$  and $$\sigma^0=\sigma_{|(f\circ \sigma)^{-1}(p)}$$
\begin{lemma}
Given $\sigma:P\rightarrow M$ with $\sigma \in Q(M)$, we have $\sigma^{\pm}$, $\sigma^0 \in Q(M)$ for generic $p$.  Let $\sigma:P\rightarrow M$ with $\partial \sigma \sim \tau$ where $\tau \in U\subset M$.  For generic $p$,  we have $[\sigma]=[\sigma^+\sqcup \sigma^-]$ in $H_*(M,U)$.  
\end{lemma}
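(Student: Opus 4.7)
The plan is to prove the two statements in sequence. For the first, the approach is to decompose $\sigma = \alpha \sqcup \beta$ with $\alpha$ trivial and $\beta$ degenerate, and then to pick $p$ generic so that it is a regular value of $f \circ \sigma$ restricted to every stratum, of $f \circ g$ for any covering map $g$ witnessing the small image of $\beta$ or of its iterated boundary, and of the analogous restrictions. The orientation-reversing involution $h$ of $\alpha$ commutes with $f \circ \alpha$, hence preserves the cutting locus and restricts to orientation-reversing involutions of $\alpha^{\pm}$ and $\alpha^{0}$, making each trivial. For $\beta$, the covers $g^{\pm}$ and $g^{0}$, obtained by applying the earlier cutting theorem to $g$, still have dimension strictly less than $\beta^{\pm}$ and $\beta^{0}$ respectively, so the latter have small image; the boundary formula $\partial(\beta^{\pm}) = (\partial\beta)^{\pm} \sqcup \pm \beta^{0}$ then gives degeneracy by induction on dimension, and similarly for $\beta^{0}$ via $\partial(\beta^{0}) = (\partial\beta)^{0}$.

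For the second assertion I would use the chain $\Crease(\sigma)$ and its boundary formula
\[
\partial \Crease(\sigma) = -\sigma \sqcup \sigma^{+} \sqcup \sigma^{-} \sqcup \Crease(\partial \sigma),
\]
which read modulo $C_{*}(U)$ gives $[\sigma] = [\sigma^{+} \sqcup \sigma^{-}] + [\Crease(\partial \sigma)]$ in $H_{*}(M, U)$. Everything then reduces to showing that $[\Crease(\partial \sigma)] = 0$ in $C_{*}(M)/C_{*}(U)$. The key auxiliary claim is that the $\Crease$ construction preserves $Q(M)$: when $\rho$ is trivial with involution $h$, the map $(h, \mathrm{id}_{[0,1]})$ is an orientation-reversing self-diffeomorphism of $\Crease(\rho)$ commuting with the induced map to $M$ (since $h$ preserves $P' = (f\circ\rho)^{-1}(p)$), and when $\rho$ is degenerate with cover $g$, the chain $\Crease(\rho)$ has small image via the same $g$ while the boundary formula combined with part one forces $\partial \Crease(\rho)$ to be trivial plus small image.

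With $\Crease(Q(M)) \subseteq Q(M)$ and the distributive property $\Crease(\sigma_{1} \sqcup \sigma_{2}) = \Crease(\sigma_{1}) \sqcup \Crease(\sigma_{2})$ in hand, the hypothesis $\partial \sigma \sqcup (-\tau) \in Q(M)$ yields $\Crease(\partial \sigma) \sqcup (-\Crease(\tau)) \in Q(M)$, that is $\Crease(\partial \sigma) \sim \Crease(\tau)$ in $C_{*}(M)$. Since $\Crease(\tau)$ has image in the image of $\tau \subseteq U$, the class $[\Crease(\partial \sigma)]$ equals that of a chain in $U$ and so vanishes in $C_{*}(M)/C_{*}(U)$, which finishes the argument. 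Throughout, the generic value $p$ must be a simultaneous regular value for $f\circ \sigma$, $f\circ \tau$, the covering maps, and their boundary restrictions; Sard's theorem makes this a full-measure condition. The main obstacle will be verifying that the nonstandard smooth structure on $\Crease(\rho)$ near the crease $P'$, defined via the chart $V \times D_{\epsilon}$ rather than a product chart, genuinely admits the involution $(h, \mathrm{id})$ when $\rho$ is trivial; this is routine but requires checking that $h$ glues across the crease compatibly with the chart transition $\phi_{\epsilon}$.
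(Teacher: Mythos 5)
Your proposal is correct and follows essentially the same route as the paper: cut the trivial and degenerate pieces separately (involution descends; the covering map $g$ is cut to witness small image, with the boundary formulas $\partial\sigma^{\pm}=(\partial\sigma)^{\pm}\sqcup\pm\sigma^0$ giving degeneracy), then use the crease construction, the fact that $\Crease$ preserves $Q(M)$, and $\Crease(\tau)\in C_*(U)$ to conclude $[\sigma]=[\sigma^+\sqcup\sigma^-]$ in $H_*(M,U)$. The only differences are presentational (your induction on dimension and the explicit check of the involution across the crease chart), so no further changes are needed.
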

\begin{proof}
If $\sigma$ is trivial so are $\sigma^{\pm}$ and $\sigma^0$ since the involution descends to them.  If $\sigma$ is degenerate, we have that $im(\sigma)\subset im(g)$ for $g:T\rightarrow M$. In addition, we have that $\partial \sigma =\alpha \sqcup \beta$ with $\beta$ trivial and $\alpha$ covered by $g'$ of smaller dimension.  Therefore, by choosing a generic $p$, we can assume its a regular value of $g'$, $g$ and $\sigma$.  
Now, $\sigma^{\pm}$ has small image since $\sigma$ has small image.  $\sigma^0$ has image contained in $g^0$ and hence has small image as well.  We have $$\partial \sigma^{\pm}=(\partial \sigma)^{\pm}\sqcup \pm \sigma^0$$ while $\partial \sigma^0=(\partial \sigma)^0$. Since $\alpha^\pm$ has small image and $\beta^\pm$ is we see that $\sigma^{\pm}$ and $\sigma^0$ are degenerate as well. \\
We apply the crease construction to conclude that $$\partial \Crease(\sigma)=\sigma^+\sqcup \sigma^- \sqcup -\sigma\sqcup \Crease(\partial \sigma)$$
Since $\partial \sigma \sim \tau$ we have $\Crease(\partial \sigma) \sim \Crease(\tau)$.  Indeed, given a chain $\rho \in Q(M)$, we have $\Crease(\rho)\in Q(M)$. This is clear for trivial chains.  For degenerate chains, observe that $\Crease(\rho)$ has same image as $\rho$ and hence is small.  Also, $$\partial \Crease(\rho)=\Crease(\partial \rho)\sqcup -\rho \sqcup \rho^+ \sqcup \rho^-$$  This shows that $\partial \Crease(\rho)$ has small image as well.  Since $\Crease(\tau)\in C_*(U)$, we conclude that $[\sigma]=[\sigma^+\sqcup \sigma^-]$ in $H_*(M,U)$ as desired.
\end{proof}

We can now prove the excision axiom:
\begin{theorem}
The natural inclusion $i_*:H_*(M-K,U-K)\rightarrow H_*(M,U)$ is an isomorphism.  
\end{theorem}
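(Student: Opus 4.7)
The plan is to use the preceding cutting lemma as a subdivision tool. First I would use smooth Urysohn to build $f \colon M \to [0,1]$ with $f \equiv 1$ on a neighborhood of the compact set $K$ and $\mathrm{supp}(f) \subset U$. Then for any $p \in (0,1)$ the level set $\{f = p\}$ separates $M$ into $\{f \geq p\} \subset U$ and $\{f \leq p\} \subset M - K$. Throughout I will use the elementary observation that a chain whose image lies in $M-K$ belongs to $Q(M-K)$ iff it belongs to $Q(M)$: triviality is target-independent, and if $g \colon N \to M$ is a degeneracy witness then $g$ restricted to $g^{-1}(M-K)$ witnesses degeneracy in $M-K$. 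In particular any equivalence in $C_*(M)$ between chains with image in $M-K$ already holds in $C_*(M-K)$, and cutting preserves $Q$ by the preceding lemma.

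For surjectivity, take $[\sigma] \in H_*(M,U)$ represented by $\sigma \colon P \to M$ with $\partial \sigma \sim \tau \in C_*(U)$, and choose a generic $p \in (0,1)$. The cutting lemma gives $[\sigma] = [\sigma^+ \sqcup \sigma^-]$ in $H_*(M,U)$; since $\sigma^+$ has image in $U$ we get $[\sigma] = [\sigma^-]$. Now $\sigma^-$ has image in $M - K$ and boundary $(\partial \sigma)^- \sqcup -\sigma^0$, with $\sigma^0$ supported in $\{f=p\} \subset U - K$. Because $\partial \sigma \sqcup -\tau \in Q(M)$, cutting yields $(\partial \sigma)^- \sqcup -\tau^- \in Q(M)$, and this chain has image in $M-K$, so it lies in $Q(M-K)$, giving $(\partial \sigma)^- \sim \tau^-$ in $C_*(M-K)$ with $\tau^- \in C_*(U-K)$. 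Hence $\partial \sigma^- \in C_*(U-K)$ and $\sigma^-$ defines a class in $H_*(M-K,U-K)$ whose image under $i_*$ is $[\sigma]$.

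For injectivity, suppose $\sigma \colon P \to M-K$ with $\partial \sigma \in C_*(U-K)$ has $i_*[\sigma] = 0$, so $\sigma \sim \partial \eta + \mu$ in $C_*(M)$ for some $\eta \colon Q \to M$ and $\mu \in C_*(U)$. Cutting at a generic $p$ preserves this equivalence, so $\sigma^- \sim (\partial \eta)^- + \mu^-$ in $C_*(M)$; since every term has image in $M-K$, the equivalence descends to $C_*(M-K)$. Combining with $\partial \eta^- = (\partial \eta)^- \sqcup -\eta^0$ and $\eta^0, \mu^- \in C_*(U-K)$, I get $\sigma^- \sim \partial \eta^- + (\eta^0 \sqcup \mu^-)$, so $[\sigma^-] = 0$ in $H_*(M-K,U-K)$. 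The cutting lemma applied now inside $C_*(M-K)/C_*(U-K)$ gives $[\sigma] = [\sigma^+ \sqcup \sigma^-]$, and $\sigma^+$ has image in $\{f \geq p\} \cap (M-K) \subset U - K$, so $[\sigma^+] = 0$ and hence $[\sigma] = 0$.

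The main technical point is arranging a single $p$ that is simultaneously a regular value for $\sigma, \tau, \eta, \mu$, their boundaries, and the countably many degeneracy witnesses $g_i$ covering the small-image strata appearing in the relevant $Q$-relations; this is the same Sard-type argument used in the proof of the preceding cutting lemma. Everything else is direct bookkeeping of images and of the boundary formulas $\partial \rho^{\pm} = (\partial \rho)^{\pm} \sqcup \pm \rho^0$.
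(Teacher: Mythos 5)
Your argument is correct and takes essentially the paper's route: cut along a level set of an Urysohn-type function separating $K$ from $M-U$, invoke the crease/cutting lemma to replace $[\sigma]$ by $[\sigma^{+}\sqcup\sigma^{-}]$ while cutting preserves $Q$, and discard the piece lying in $U$. The only divergences are matters of bookkeeping rather than strategy --- you make explicit that a $Q(M)$-relation among chains with image in $M-K$ restricts to a $Q(M-K)$-relation (by restricting the small-image witnesses to $g^{-1}(M-K)$), and in the injectivity step you cut $\sigma$ as well and apply the cutting lemma a second time inside $(M-K,U-K)$, where the paper instead chooses $f$ adapted to the compact image of $\sigma$ so that $\sigma^{+}=\sigma$; if anything, your version fills in details the paper leaves implicit.
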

We break the proof into two lemmas:  
\begin{lemma}
$i_*$ is a surjection.  
\end{lemma}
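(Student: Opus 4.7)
The plan is to use the cutting lemma proved just above (the ``Crease'' computation) with a smooth function $f:M\to \Ar$ that separates $K$ from $M-U$, and then discard the piece that lives over $K$ because it already lies in $C_*(U)$.

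First, I would choose, via a smooth partition of unity, a function $f:M\to[0,1]$ with $f\equiv 1$ on an open neighborhood of $K$ and with $\mathrm{supp}(f)\subset U$. In particular, $f\equiv 0$ on $M-U$. Now suppose $[\sigma]\in H_*(M,U)$ is represented by $\sigma:P\to M$ with $\partial\sigma\sim \tau$ for some $\tau\in C_*(U)$. Pick $p\in(0,1)$ which is simultaneously a regular value of $f\circ\sigma$ and of $f\circ \tau$ (and of any auxiliary maps of smaller dimension coming from small-image strata, as in the previous lemma). By the cutting lemma, $[\sigma]=[\sigma^+\sqcup\sigma^-]$ in $H_*(M,U)$.

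Next, I would observe the two key containments. Since $\sigma^+$ maps into $\{f\ge p\}\subset \mathrm{supp}(f)\subset U$, we have $\sigma^+\in C_*(U)$ and therefore $[\sigma^+]=0$ in $H_*(M,U)$, so $[\sigma]=[\sigma^-]$. On the other hand $\sigma^-$ maps into $\{f\le p\}$, which is disjoint from $K\subset\{f=1\}$ (because $p<1$), so $\sigma^-$ takes values in $M-K$. Thus $\sigma^-$ is a chain in $C_*(M-K)$.

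Finally, I would check that $\sigma^-$ represents a relative class in $H_*(M-K,U-K)$ mapping to $[\sigma]$. By the boundary formula, $\partial\sigma^-=(\partial\sigma)^-\sqcup(-\sigma^0)$. The cross-section $\sigma^0$ lies in $\{f=p\}\subset U$ (since $p>0$) and in $M-K$ (since $p<1$), hence in $U-K$. For $(\partial\sigma)^-$, the equivalence $\partial\sigma\sim\tau$ and the first half of the cutting lemma give $(\partial\sigma)^-\sim \tau^-\in C_*(U)$, and $\tau^-$ maps into $\{f\le p\}\cap U\subset U-K$. Therefore $\partial\sigma^-$ is equivalent to a chain in $C_*(U-K)$, so $\sigma^-$ defines a class in $H_*(M-K,U-K)$ whose image under $i_*$ is $[\sigma]$. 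The only slightly delicate point is making sure a single generic $p$ works for $\sigma$, for $\tau$, and for the auxiliary small-image witnesses simultaneously; this is the main obstacle, and it is handled exactly as in the proof of the preceding lemma by invoking Sard's theorem on each of the countably many relevant maps.
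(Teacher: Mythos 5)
Your proof is correct and follows essentially the same route as the paper: both arguments cut $\sigma$ along a generic level set of a cutoff function separating $K$ from $M-U$ using the cutting/crease lemma and then discard the piece lying in $C_*(U)$, the only difference being that your choice of $f$ (equal to $1$ near $K$ and supported in $U$, rather than $0$ on $K$ and $>1$ outside $U$) swaps the roles of $\sigma^{+}$ and $\sigma^{-}$. Your extra verification that the surviving piece is a relative cycle for $(M-K,\,U-K)$, via $\partial\sigma^{-}=(\partial\sigma)^{-}\sqcup(-\sigma^{0})$ and a simultaneous choice of regular value for $\sigma$, $\tau$, and the small-image witnesses, is a detail the paper leaves implicit, and you handle it correctly.
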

\begin{proof}
Given a cycle $\sigma$ in $H_*(M,U)$, we show that it is homologous to $\sigma'$ with $$im(\sigma') \subset M-K$$ Take a smooth function $f:M\rightarrow \Ar$ with $f=0$ on $K$ and $f>1$ outside $U$.   Let $p\in (0,1)$ be a regular value of $f\circ \sigma$. Cutting by the preimage of $p$  decomposes $\sigma$ as $\sigma^+ +\sigma^-$ with $im(\sigma^-)\subset U$.  Since, $[\sigma]=[\sigma^+ \sqcup \sigma^-]$ in $H_*(M,U)$ and $[\sigma^-]=0$ in $C_*(M,U)$, we have $[\sigma]=[\sigma^+]$ in $ H_*(M,U)$.
\end{proof}
\begin{lemma}
$i_*$ is an injection.  
\end{lemma}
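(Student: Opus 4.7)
The plan is to apply the cutting strategy of the surjectivity argument simultaneously to the relative cycle $\sigma$ and to a chain in $M$ witnessing $i_*[\sigma]=0$. Suppose $\sigma:P\to M-K$ represents a relative cycle in $C_*(M-K,U-K)$ whose image $i_*[\sigma]$ vanishes in $H_*(M,U)$. Unpacking this, there exist $\rho\in C_*(M)$ and $\tau\in C_*(U)$ with $\partial\rho\sqcup-\sigma\sqcup-\tau\in Q(M)$.

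First I would choose a smooth Urysohn-type function $f:M\to[0,1]$ with $f\equiv 0$ on $K$ and $f\equiv 1$ outside $U$; this exists since $K$ is compact and $U$ is open. By Sard's theorem applied simultaneously to the countable collection of relevant maps ($f\circ\sigma$, $f\circ\rho$, $f\circ\tau$, together with the auxiliary small-image covers witnessing the $Q(M)$-membership), I can pick $p\in(0,1)$ that is generic in the sense required by the cutting lemma. The key geometric consequences are $\{f\geq p\}\subset M-K$ (since $p>0$ and $f$ vanishes on $K$), $\{f\leq p\}\subset U$ (since $p<1$ and $f\equiv 1$ outside $U$), and $\{f=p\}\subset U-K$.

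Next I would apply the cutting lemma in the pair $(M-K,U-K)$ to $\sigma$: the crease construction keeps all chains inside the image of $\sigma$ and hence inside $M-K$, so $[\sigma]=[\sigma^+\sqcup\sigma^-]$ in $H_*(M-K,U-K)$. Since $\sigma^-\subset\{f\leq p\}\cap(M-K)\subset U-K$, this collapses to $[\sigma]=[\sigma^+]$. Now $\rho^+$ serves as a witness chain in $C_*(M-K)$ because it maps into $\{f\geq p\}\subset M-K$. The boundary formula gives $\partial\rho^+=(\partial\rho)^+\sqcup\rho^0$, and because cutting commutes with disjoint union and preserves $Q$ for generic $p$, the relation $\partial\rho\sim\sigma\sqcup\tau$ modulo $Q(M)$ transfers to $(\partial\rho)^+\sim\sigma^+\sqcup\tau^+$ modulo $Q(M-K)$. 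Since $\tau^+\subset U\cap\{f\geq p\}\subset U-K$ and $\rho^0\subset\{f=p\}\subset U-K$, we deduce $\partial\rho^+\sim\sigma^+$ modulo $C_*(U-K)$. Hence $[\sigma^+]=0$ in $H_*(M-K,U-K)$, and combining with the earlier reduction, $[\sigma]=0$.

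The step I expect to be the main obstacle is justifying that the $Q(M)$-equivalence $\partial\rho\sqcup-\sigma\sqcup-\tau\in Q(M)$ descends after cutting to a $Q(M-K)$-equivalence at generic $p$. Concretely, one must check that cutting preserves triviality (the orientation-reversing involution descends to the $\pm$-pieces) and degeneracy (the small-image covers admit the chosen $p$ as a regular value), and that it commutes with disjoint union. Both ingredients are essentially contained in the first part of the cutting lemma; the additional input is a Sard-type simultaneous regular-value argument over the countable family of auxiliary maps in the $Q(M)$ data, which is legitimate since each chain has only countably many components.
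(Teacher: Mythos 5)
Your argument is correct and follows essentially the paper's own strategy: cut the bounding chain by a generic level $p$ of a Urysohn function with $f\equiv 0$ on $K$ and $f\equiv 1$ off $U$, use $\partial(\rho^+)=(\partial\rho)^+\sqcup\rho^0$, note that cutting preserves $Q$ for generic $p$, and observe that all resulting pieces lie in $M-K$ with the error terms ($\tau^+$, $\rho^0$, $\sigma^-$) in $U-K$. The only real difference is that the paper adapts the cut so that $\sigma^+=\sigma$ and $\sigma^-=\emptyset$ (possible because the compact image of $\sigma$ avoids $K$), which spares it your additional, but perfectly legitimate, application of the crease/cutting lemma inside the pair $(M-K,U-K)$ to reduce $[\sigma]$ to $[\sigma^+]$.
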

\begin{proof}
Consider a cycle in $H_*(M-K,U-K)$ represented by a chain $\sigma$.  Suppose $[\sigma]=0$ in $H_*(M,U)$.  Thus, there exists $\tau$ and $\eta\in C_*(U)$ with $\partial \tau \sqcup -\sigma \sim \eta$.  Since the image of $\sigma$ avoids $K$,  we can cut our chains by a generic hyperplane as above such that $\sigma^+=\sigma$ and $\sigma^-=\emptyset$.  Since cutting by a generic plane preserves $Q(M)$, we get $$(\partial \tau)^+ \sqcup -\sigma \sim  \eta^+$$ We have $\partial(\tau^+)=(\partial \tau)^+\sqcup \tau^0$.  Therefore, $$\partial (\tau^+) \sqcup -\sigma \sqcup -\tau^0\sim \eta^+$$ Since each chain in this union lies in $M-K$,  $[\sigma]=0$ in $H_*(M-K,U-K)$ as well. 
\end{proof}

Much of the previous discussion applies to the cohomology groups. For instance, given a smooth map $f:M\rightarrow N$ of manifolds without boundary, we get an induced map: $$f^*:H^*(N)\rightarrow H^*(M)$$ defined by pullback.  Note that it is only well defined on the homology level since we may need to perturb a cocycle representative to ensure that it is in general position. In the special case of an  inclusion of an open set, $i:U\subset M$, the map $i^*$ is well defined on the chain level since transversality is automatic.  We define $H^*(M,U)$ to be the homology of the kernel of $i^*$.

\section{Thom Isomorphism}
In this section we prove the Thom isomorphism theorem.  As we will show, this result is a direct consequence of the definitions and does not require an inductive Mayer-Vietoris argument.  Let $M$ be a smooth connected $m$-manifold without boundary and $V\rightarrow M$ a real vector bundle over $M$ of dimension $n$.  Let $\pi:V\rightarrow M$ be the projection and $i:M\rightarrow V$ be the zero section.
\begin{theorem}
$\pi^*:H^*(M)\rightarrow H^*(V)$ is an isomorphism with inverse $i^*$.  Similarly, the map $i_*:H_*(M)\rightarrow H_*(V)$ is an isomorphism with inverse $\pi_*$.  
\end{theorem}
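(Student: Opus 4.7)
The plan is to deduce both isomorphisms from two elementary facts: $\pi\circ i=\mathrm{id}_M$, and $i\circ\pi$ is smoothly homotopic to $\mathrm{id}_V$ via the linear scaling $F(v,t)=tv$. Everything else is then forced by functoriality and the homotopy axiom proved in the previous section.

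First I would dispose of the easy composition. Since $\pi\circ i=\mathrm{id}_M$, functoriality gives $\pi_*\circ i_*=\mathrm{id}$ on $H_*(M)$, and contravariantly $i^*\circ\pi^*=\mathrm{id}$ on $H^*(M)$. Note that $\pi^*$ is actually defined at the cochain level, since $\pi$ is a submersion (hence automatically transverse to every cochain), and the preimage of a compact $K\subset V$ under $\pi^*\sigma$ sits over the compact set $\sigma^{-1}(\pi(K))$, so properness is preserved.

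For the reverse composition, the smooth homotopy $F:V\times[0,1]\to V$ defined by $F(v,t)=tv$ connects $i\circ\pi$ at $t=0$ to $\mathrm{id}_V$ at $t=1$. Applying the homology homotopy axiom immediately gives $i_*\circ\pi_*=(i\circ\pi)_*=\mathrm{id}$ on $H_*(V)$. For cohomology I need the analogous homotopy invariance of $f^*$: given a proper cochain $\tau:Q\to V$, after a generic perturbation using the transversality family to secure transversality of $F$ with $\tau$, I pull $\tau$ back along $F$ and compose with the projection $V\times[0,1]\to V$. Properness of the result is preserved since $[0,1]$ is compact, and the boundary of the resulting cochain interpolates $F_0^{*}\tau$ and $F_1^{*}\tau$ exactly as in the homology case, yielding a chain homotopy. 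This gives $\pi^*\circ i^*=(i\circ\pi)^*=\mathrm{id}$ on $H^*(V)$.

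The main obstacle is the cohomology bookkeeping rather than any new geometric idea: since $\pi$ itself is not proper, one has to check at each step that the pullbacks remain proper cochains and that the perturbation family from the transversality section suffices for the needed transversality. Once these routine verifications are in place, the theorem reduces to functoriality and homotopy invariance applied to the pair $(\pi,i)$.
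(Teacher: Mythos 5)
Your proof is correct and follows essentially the same route as the paper: the theorem is reduced to $\pi\circ i=\mathrm{id}_M$ and the fiberwise scaling homotopy from $i\circ\pi$ to $\mathrm{id}_V$, combined with functoriality and homotopy invariance. The paper states this in one line; your additional checks (properness of $\pi^*\tau$, transversality bookkeeping for the cohomological homotopy) are exactly the routine verifications the paper leaves implicit.
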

\begin{proof}
Follows immediately from the fact that both $\pi\circ i$ and $i\circ \pi$ are homotopic to identity.
\end{proof}
Since we defined cohomology groups independently of homology, the fact that $i_*$ is an isomorphism does not immediately imply the same for $i^*$.  Let us restate the two results in geometric terms.  The isomorphism for $i_*$ simply asserts that any homology class in $V$ may by pushed down to lie on the zero section.  On the other hand, $i^*$ identifies cocycles of different dimension.  In essence, it asserts that a cohomology class is completely determined by its restriction to the zero section.  \\\\   
 Assume that $M$ is compact and that $V$ is oriented.  Note, that this implies that both $\pi$ and $i$ are oriented maps.  Since $i$ is proper, we get a map induced by inclusion:
$$i_*: H^*(M)\rightarrow H^{*+n}(V,V-i(M))$$
Recall that $H^{*+n}(V,V-i(M))$ is defined by considering the subcomplex of $C^*(V)$ of chains that vanish on $V-i(M)$. 
We have the following version of the Thom Isomorphism:
\begin{theorem}
$i_*: H^*(M)\cong  H^{*+n}(V,V-i(M))$ with inverse induced by $i^*$.
\end{theorem}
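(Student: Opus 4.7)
The plan is to construct the inverse map $i^*$ at the chain level via the radial scaling deformation of $V$ onto its zero section, and then verify that the two compositions are the identity essentially tautologically.

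Given a cocycle $\tau: Q \to V$ representing a class in $H^{*+n}(V, V-i(M))$, I use the radial scaling $r_t(v) = (1-t)v$ on $V$, which provides a homotopy between $r_0 = \text{id}_V$ and $r_1 = i \circ \pi$ (the latter because $(1-1)v = 0_{\pi(v)} = i(\pi(v))$). Form the chain $R(\tau): Q \times [0,1] \to V$ by $R(\tau)(q,t) = (1-t)\tau(q)$. The restriction at $t=1$, namely $i\pi\tau$, has image on $i(M)$ and factors as $i \circ (\pi\tau)$ where $\pi\tau: Q \to M$ is a codimension-$*$ cochain on $M$. I define $i^*[\tau] := [\pi\tau]$.

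The boundary formula $\partial R(\tau) = R(\partial\tau) \sqcup \tau \sqcup (-i\pi\tau)$ then shows $[\tau] = [i\pi\tau] = [i_*(i^*\tau)]$ in $H^{*+n}(V, V-i(M))$, once I verify that $R(\tau)$ lies in the relative complex. The preimage $R(\tau)^{-1}(V-i(M))$ equals $\tau^{-1}(V-i(M)) \times [0,1)$, and this remains in $Q(V-i(M))$ because taking products with $[0,1)$ preserves both triviality (extend the orientation-reversing involution by the identity in the $t$-direction) and the small-image property (if the image of $\beta$ is covered by $g: T \to V-i(M)$ with $\dim T < \dim \beta$, then the image of $\beta \times [0,1)$ is covered by $(s,t) \mapsto (1-t)g(s)$ from $T \times [0,1)$, still of strictly smaller dimension). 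The other composition is immediate at the chain level: $i^*\circ i_*(\sigma) = \pi(i\sigma) = \sigma$, since $i\sigma$ already has image on $i(M)$.

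The main obstacle is ensuring that $R(\tau)$ is a valid cochain (proper) and that $\pi\tau: Q \to M$ is proper, since $\pi: V \to M$ is not itself proper. Compactness of $M$ is the decisive ingredient: every class in $H^{*+n}(V, V-i(M))$ admits a representative supported in a compact neighborhood of $i(M)$, obtained by choosing a smooth proper function $f: V \to \Ar$ with $f=0$ on $i(M)$, cutting $\tau$ along the preimage of a generic small regular value (invoking the cutting lemma preceding the excision theorem), and discarding the faraway piece, which lies in $C^*(V-i(M))$ and hence represents zero in the relative complex. With such a compactly supported representative, both $R(\tau)$ and $\pi\tau$ become proper, and the argument above establishes that $i_*$ and $i^*$ are mutually inverse isomorphisms.
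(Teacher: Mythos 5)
Your proposal is correct and follows essentially the same route as the paper: cut the relative cocycle along a sphere bundle (a level set of a proper radial function, using compactness of $M$) to discard the piece supported away from the zero section, which is $\sim 0$ by the relative-cocycle hypothesis, and then push the remaining compactly supported piece onto the zero section. Your radial-scaling chain $R(\tau)$ is just an explicit chain-level version of the paper's ``projecting to the zero section,'' and your attention to properness of $\pi\circ\tau$ and to $R(\tau)$ staying in the kernel of restriction fills in details the paper leaves implicit.
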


\begin{proof}
The proof is very similar to the proof of excision.  To show $i_*$ is surjective consider $[\sigma]\in H^*(V,V-i(M))$.  Introduce a metric on $V$.  We may cut $\sigma$ by the unit sphere bundle to get $[\sigma]=[\sigma^+\sqcup \sigma^-]$ with $\sigma^+$ supported away from the zero section.  By hypothesis, $\sigma^+\sim 0$.  Therefore, we may replace $\sigma$ by a chain contained in the unit disk bundle.  Projecting $\sigma$ to the zero section shows that $\sigma$ is homologous to a chain in $im(i_*)$.  Now, we establish surjectivity.  Suppose $i_*(\sigma)\sim \partial \tau$ in $C^*(V,V-i(V))$.  Again, we may cut $\tau$ by the unit sphere bundle to produce $\tau^-$ in the unit disk bundle of $V$ with $\partial \tau^- \sim \sigma$.  Projecting to the zero section yields $\tau'$ with $\partial \tau' \sim \sigma$.   
\end{proof}

We end this section by pointing out the relationship between our proof and the more standard proofs based on the notion of a Thom class. When the bundle is oriented, $$i:M\rightarrow V$$ may be viewed as a cohomology class $[U]\in H^n(V,V-i(V))$.  The restriction of $[U]$ is any fiber $V_p$ is the origin and hence the generator of $H^n(V_p,V_p-V_p^*)$.  This is the characterization of the Thom class.  The isomorphism  $$i_*:H^*(M)\rightarrow H^{*+n}(V,V-i(M))$$ may be viewed as the composition of pulling back by $$\pi^*:H^*(M)\rightarrow H^*(V)$$ followed by intersecting (the geometric analogue of the cup product) with the class $[U]$.   Note that the pullback, $i^*([U])\in H^n(M)$ is the Euler class associated to the bundle $V$.  This corresponds the intersection of the zero section with a generic section.\\\\
One can also recast the Thom isomorphism is a slightly different form.  Define $H^*_c(M)$ to be the group generated by oriented cochains which have compact domain.  If $V$ is oriented, we have $$i_*:H^*(M) \cong H_c^{*+n}(V)$$ induced by inclusion.

\section{Morse Homology Theorem}
In this section we prove the Morse homology theorem identifying the homology groups of the complex we constructed with those of the Morse complex.  There are many versions of this theorem in the literature.  We decided to include a proof since it has a direct generalization to the infinite dimensional theory.

\subsection{Passing a Critical Point}
Let $g$ be a Riemannian metric and $$f:M\rightarrow \Ar$$ be a proper Morse function, i.e. all critical points of $f$ are nondegenerate.  We assume that $f$ is bounded below. Choose a metric on $M$ such that $\nabla f$ is complete. We also assume that there is at most one critical point at each level, although this is not essential for the arguments that follow. Given a flow line $\gamma:[a,b]\rightarrow M$ with $\gamma'(t)=-\nabla f$ we define the energy of $\gamma$ as $$\int_a^b|\gamma'(t)|^2dt=f(a)-f(b)$$ Let $M^c=f^{-1}((-\infty,c))$. We let $\GF_t(M)$ be the time $t$ downward gradient flow. Thus, $$\GF_t \circ \GF_\tau =\GF_{t+\tau}$$  The flow acts on chains by $\GF_t(\sigma)=\GF_t \circ \sigma$. \\

First we show that only the presence of critical points can change the topology:
\begin{lemma}
Given $a<b$.  Assume there are no critical values of $f$ in $[a,b]$. Let $i:M^a\rightarrow M^b$ be the inclusion. We have the isomorphism $$i_*:H_*(M^a)\rightarrow H_*(M^b)$$
\end{lemma}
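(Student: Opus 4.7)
The plan is to use the downward gradient flow $\mathcal{F}_t$ as a deformation that pushes everything in $M^b$ into $M^a$, then apply the homotopy axiom already proved. Since $f$ is proper, the sublevel set $f^{-1}([a,b])$ is compact, and the absence of critical values in $[a,b]$ forces $|\nabla f|^2 \geq \delta > 0$ on this compact slab. Because the energy identity $f(\gamma(0))-f(\gamma(t)) = \int_0^t |\gamma'(s)|^2\,ds$ holds along flow lines, any point in $M^b$ must either already lie in $M^a$ or else descend below $a$ in time at most $(b-a)/\delta$. Consequently, for $T>(b-a)/\delta$ we have $\mathcal{F}_T(M^b)\subset M^a$, and of course $\mathcal{F}_t(M^a)\subset M^a$ for every $t\geq 0$.

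For surjectivity of $i_*$, take a cycle $\sigma:P\to M^b$. The smooth homotopy $H(p,t)=\mathcal{F}_{tT}(\sigma(p))$ from $P\times[0,1]$ to $M^b$ is entirely contained in $M^b$, so by the homotopy theorem already established, $\sigma$ is chain homotopic in $C_*(M^b)$ to $\mathcal{F}_T\circ \sigma$. The latter has image in $M^a$, so it represents a class $[\mathcal{F}_T\circ\sigma]\in H_*(M^a)$ with $i_*[\mathcal{F}_T\circ\sigma]=[\sigma]$.

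For injectivity, suppose $\sigma:P\to M^a$ is a cycle with $i_*[\sigma]=0$, meaning $\sigma\sim\partial\tau$ in $C_*(M^b)$ for some $\tau:Q\to M^b$. Pick $T$ large enough that $\mathcal{F}_T(Q)\subset M^a$, so that $\mathcal{F}_T\circ\tau$ is a chain in $M^a$ with $\partial(\mathcal{F}_T\circ\tau)\sim \mathcal{F}_T\circ\partial\tau\sim \mathcal{F}_T\circ\sigma$. Since $\sigma$ already lies in $M^a$, the homotopy $H(p,t)=\mathcal{F}_{tT}(\sigma(p))$ stays in $M^a$, so the homotopy axiom applied inside $C_*(M^a)$ yields $\sigma\sim \mathcal{F}_T\circ\sigma\sim \partial(\mathcal{F}_T\circ\tau)$ in $C_*(M^a)$, showing $[\sigma]=0$ in $H_*(M^a)$.

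The only real obstacle is bookkeeping: one must check that precomposition with $\mathcal{F}_t$ preserves the class $Q(M)$ (so that the map descends to homology and to $H_*(M^a)$), but this is immediate, since $\mathcal{F}_t$ is a diffeomorphism of $M$ that preserves $M^a$, hence carries trivial chains to trivial chains and chains of small image to chains of small image. With these verifications, the argument reduces to a direct application of the previously proved functoriality and homotopy-invariance theorems.
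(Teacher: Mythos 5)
Your proposal is correct and follows essentially the same route as the paper: push chains from $M^b$ into $M^a$ with the downward gradient flow and invoke homotopy invariance, for both surjectivity and injectivity. The quantitative estimate (properness plus no critical values giving $|\nabla f|^2\geq\delta$ on the compact slab, hence a uniform flow time) and the check that the flow preserves $Q(M)$ are details the paper leaves implicit, but they are the same argument.
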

\begin{proof}
Given a cycle $\sigma \in H_*(M^b)$, $\GF_t(\sigma)$ lies in $M^a$ for sufficiently large $t$.  Since $\GF_t(\sigma)$ is homologous to $\sigma$, we have that $i_*$ is surjective.  Now, assume $i_*(\sigma)=0$.  Thus, $\partial \tau \sim \sigma$ with $\tau \in H_*(M^b)$. For large $t$, we have $\GF_t(\tau)\in C_*(M^a)$ and $\partial \GF_t(\tau)\sim \GF_t(\sigma)$.  Since the gradient flow preserves the homology class, we have $[\sigma]=0$ in $H_*(M^a)$ as well.   
\end{proof}
An easy compactness argument shows that:
\begin{lemma}
Assume the $f$ has a unique isolated critical point $y$ at level $c$. Choose $c'>c$ so close that $M^{c'}$ has no new critical points.  Let $U$ be a small neighborhood of $y$.  There exists $\ep>0$ such that any chain $\sigma:P\rightarrow M^{c'}$ may be pushed by the flow down to $\sigma_0$ with $im(\sigma_0)\subset U\cup M^{c-\ep}$.
\end{lemma}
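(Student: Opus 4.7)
The plan is to exhibit a uniform time $T > 0$, depending only on $U$, $f$, $c'$, and $\epsilon$ (and not on the chain $\sigma$), such that $\GF_T(M^{c'}) \subset U \cup M^{c-\epsilon}$; then $\sigma_0 := \GF_T \circ \sigma$ will satisfy the required image condition, and since $\GF_T$ is smoothly isotopic to the identity, $\sigma_0$ is automatically homologous to $\sigma$. The argument naturally splits into a local analysis near $y$ and a global energy estimate.

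For the local part, shrinking $U$ if necessary, I may assume $U$ lies in the domain of a Morse chart in which $f - c$ is a nondegenerate quadratic form. Pick a smaller concentric neighborhood $U'$ with $\overline{U'} \subset U$ and $\epsilon > 0$, both sufficiently small. The hyperbolic structure of the gradient flow near $y$ should then yield the following dichotomy for any trajectory that enters $U'$: either (a) it remains in $U$ for all forward time and converges to $y$ along the stable manifold, or (b) it exits $U$ at a point where $f < c - \epsilon$, and hence remains in $M^{c-\epsilon}$ for all subsequent time since $f$ decreases along the flow. In either case, once the trajectory enters $U'$ it stays in $U \cup M^{c-\epsilon}$ thereafter.

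For the global part, shrink $\epsilon$ further so that $y$ is the unique critical point of $f$ in $f^{-1}([c-\epsilon, c'])$. The set $K := f^{-1}([c-\epsilon, c']) \setminus U'$ is compact (since $f$ is proper and bounded below) and free of critical points, so $|\nabla f|^2 \geq \delta^2 > 0$ on $K$ for some $\delta > 0$. For any trajectory starting at $p \in M^{c'}$, as long as $\GF_t(p) \in K$ we have $\tfrac{d}{dt} f(\GF_t(p)) \leq -\delta^2$ while $f \geq c - \epsilon$; hence the trajectory must exit $K$ into $U' \cup M^{c-\epsilon}$ within time $T_0 := (c' - c + \epsilon)/\delta^2$.

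Combining the two, for any $p \in M^{c'}$ and any $t \geq T_0$ we obtain $\GF_t(p) \in U \cup M^{c-\epsilon}$; setting $T = T_0$ and $\sigma_0 = \GF_T \circ \sigma$ finishes the argument. The main obstacle is the local step: verifying that $\epsilon$ and $U'$ can be chosen so that every trajectory that exits $U$ does so only after $f$ has dropped below $c - \epsilon$. This rests on the nondegeneracy of $y$ and the hyperbolic nature of the linearized flow, and is most transparent via the Morse lemma applied to a suitably sized coordinate ball around $y$, where the exit value of $f$ can be controlled directly in terms of the chart radii.
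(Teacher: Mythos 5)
The paper offers no written argument here (it simply asserts that the lemma follows from ``an easy compactness argument''), so there is nothing to diverge from; your scheme is the standard way to make such a claim precise, and its global half is complete. The decomposition is exactly right: a uniform time works because $f^{-1}([c-\ep,c'])$ is compact (properness of $f$), $|\nabla f|^2\geq \delta^2$ off the inner neighborhood $U'$, sublevel sets are forward invariant, and---this is the point you correctly build in---$U\cup M^{c-\ep}$ itself is \emph{not} forward invariant (a trajectory can clip the edge of $U$ and leave with $f$ still in $[c-\ep,c)$), which is why the pointwise statement ``every point eventually meets $U\cup M^{c-\ep}$'' would not give a single time for the whole compact image of $\sigma$, and why the absorbing pair $U'\subset U$ is needed.

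What is still owed is the local dichotomy, which you flag yourself as the main obstacle. Note also that its restatement in your last paragraph (``every trajectory that exits $U$ does so only after $f$ has dropped below $c-\ep$'') is false as written---trajectories passing through $U$ far from $y$ can exit with $f$ barely changed; the correct statement is the one you give earlier, for trajectories that have entered the smaller $U'$, and that is the version your argument uses. To close this step you do not need the full hyperbolic normal-form analysis (which is slightly delicate anyway, since with an arbitrary metric the gradient flow in the Morse chart is not the linear saddle flow). A softer route, in the spirit of the paper's hint, is by contradiction: if no choice of $U'$ and $\ep$ worked, there would be flow segments contained in $\overline U$ running from points $p_n\to y$ to exit points $q_n\in\partial U$ with $f(q_n)\geq c-1/n$; these have uniformly bounded energy, so (a mild variant of the broken-trajectory compactness theorem quoted in the Morse--Smale section, with one endpoint tending to the critical point $y$ and the other constrained to the compact set $\partial U$) a subsequence converges to a possibly broken trajectory in $\overline U$ from $y$ to some $q\in\partial U$ with $f(q)\geq c$. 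Since $y$ is the only critical point in $\overline U$ and $f$ strictly decreases along nonconstant flow lines starting at $y$, this forces $f(q)<c$, a contradiction. With that lemma supplied, your proof is correct.
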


Let $V$ be a finite dimensional vector space with  $dim(V)=dim(M)$.  The Morse lemma allows us to identify a neighborhood $U$ of $y$ with a neighborhood of the origin in $V=V^+\oplus V^-$ and $$f:V\rightarrow \mathbb{R}$$ with $$f(v)=-|v^-|^2+|v^+|^2+f(y)$$ Let $$ind(y)=dim(V^-)$$ By taking $\ep$ to be sufficiently small in the previous lemma, we can assume that $U$ contains the $2\sqrt{\ep}$ ball in $V$.  
\begin{theorem}
Let $k=ind(y)$.  For $\ep>0$ small, $H_k(M^{c+\ep},M^{c-\ep})=\mathbb{Z}$ and $H_j(M^{c+\ep},M^{c-\ep})=0$ for $j\neq k$.
\end{theorem}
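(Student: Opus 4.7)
The plan is to localize the computation to the Morse chart around $y$, use a straight-line deformation onto the descending disk, and identify the result with the relative homology of a closed $k$-disk modulo its boundary sphere.

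For the localization step, the preceding lemma says that any chain in $M^{c+\ep}$ may be pushed by the gradient flow into $U \cup M^{c-\ep}$, and the flow itself provides a chain homotopy between the original chain and the pushed chain via the smooth map $F(x,t) = \GF_t(x)$ and the homotopy axiom. A standard five-lemma argument on the long exact sequences of the pairs $(U \cup M^{c-\ep}, M^{c-\ep})$ and $(M^{c+\ep}, M^{c-\ep})$ then shows the inclusion induces an isomorphism on relative homology. Applying excision to remove the closed set $M^{c-\ep} \setminus U''$ (for $U''$ a slight shrinking of $U$) gives $H_*(M^{c+\ep}, M^{c-\ep}) \cong H_*(U, U \cap M^{c-\ep})$.

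In Morse coordinates, identify $U$ with an open ball of radius $2\sqrt{\ep}$ in $V^+ \oplus V^-$ where $f(v^+, v^-) = c + |v^+|^2 - |v^-|^2$, and consider the smooth homotopy $H_s(v^+, v^-) = (s v^+, v^-)$ for $s \in [0,1]$. The direct computation
$$f(H_s(v^+, v^-)) = c + s^2 |v^+|^2 - |v^-|^2 \leq f(v^+, v^-)$$
shows that $H_s$ preserves both $U \cap M^{c+\ep}$ and $U \cap M^{c-\ep}$, so it induces a self-map of the quotient complex $C_*(U)/C_*(U \cap M^{c-\ep})$. Since $H_1$ is the identity and $H_0$ is the projection onto the descending disk $V^- \cap U$, the homotopy axiom applied in the relative setting yields $H_*(U, U \cap M^{c-\ep}) \cong H_*(V^- \cap U, V^- \cap U \cap M^{c-\ep})$. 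The remaining pair is (open $k$-disk, open annulus). The closed disk $D = \{(0, v^-) : |v^-|^2 \leq r^2\}$ for any $r$ with $\ep < r^2 < 4\ep$ is a compact oriented $k$-manifold with corners whose boundary lies in $V^- \cap U \cap M^{c-\ep}$, so it defines the generating relative $k$-cycle. Uniqueness up to integer multiples follows from the long exact sequence of the pair combined with the contractibility of $V^- \cap U$ and the deformation retract of $V^- \cap U \cap M^{c-\ep}$ onto $S^{k-1}$: the relative homology reduces to $\tilde{H}_{*-1}(S^{k-1})$, which is $\mathbb{Z}$ in degree $k$ and zero otherwise.

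The main obstacle I anticipate is the bookkeeping in applying the homotopy axiom in the relative setting, since the version stated in the excerpt produces a chain homotopy on absolute complexes only. The necessary extension amounts to verifying that both the induced map and the chain homotopy $H(\sigma)(p,t) = H_t(\sigma(p))$ descend to the quotient $C_*(U)/C_*(U \cap M^{c-\ep})$, which follows directly from the preservation property $H_s(M^{c-\ep} \cap U) \subset M^{c-\ep} \cap U$. A secondary technical issue is the sphere homology invoked in the final step; it can be established from the already-verified Eilenberg-Steenrod axioms via a Mayer-Vietoris induction or, more directly in the geometric framework, by an intersection-pairing argument using a transverse point in $V^- \cap U$ to detect the integer coefficient of $D$.
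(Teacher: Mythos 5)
Your route is the classical handle-attachment argument --- localize to the Morse chart by excision, retract onto the descending disk, and quote the homology of $(D^k,\partial D^k)$ --- and its later steps (descending the homotopy to the quotient complex, computing the disk/annulus pair from the already-verified axioms and Mayer--Vietoris, detecting the generator by an intersection pairing) are all consistent with the paper's machinery. The genuine gap is at the first step, the isomorphism $H_*(M^{c+\ep},M^{c-\ep})\cong H_*(U,U\cap M^{c-\ep})$. Your five-lemma argument needs the inclusion $U\cup M^{c-\ep}\hookrightarrow M^{c+\ep}$ to induce an isomorphism on absolute homology. The pushing lemma gives surjectivity, but not injectivity: if a cycle $\alpha$ with image in $U\cup M^{c-\ep}$ bounds a chain $\beta$ in $M^{c+\ep}$, you may flow $\beta$ into $U\cup M^{c-\ep}$, but you then need $\alpha$ and $\GF_T(\alpha)$ to be homologous \emph{inside} $U\cup M^{c-\ep}$, and the flow trace does not stay there: the set $U\cup M^{c-\ep}$ is not positively invariant under the gradient flow when $U$ is the round chart ball. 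Concretely, in the model $f=c+|v^+|^2-|v^-|^2$ a point with $|v^+|^2=|v^-|^2$ slightly less than $2\ep$ lies in the ball of radius $2\sqrt{\ep}$ at level $c$, yet its flow line exits that ball at a level strictly between $c-\ep$ and $c$, hence leaves $U\cup M^{c-\ep}$. Establishing the needed homology equivalence is precisely Milnor's deformation retraction of $M^{c+\ep}$ onto the union of $M^{c-\ep}$ with a handle, which requires either a bespoke deformation or a flow-adapted (non-round) neighborhood such as $\{|v^+|^2\le\delta\}\cap f^{-1}([c-\ep,c+\ep])$, whose union with $M^{c-\ep}$ \emph{is} flow-invariant; you supply neither. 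A secondary, fixable point: the paper's excision theorem removes a \emph{compact} $K$ contained in the open subspace, whereas $M^{c-\ep}\setminus U''$ is in general neither compact nor closed in $M$, so that application must be rephrased (excise, say, the compact set $f^{-1}((-\infty,c-\ep-\delta])\setminus U''$, using properness of $f$, and clean up the remaining collar afterwards).

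By contrast, the paper's proof is structured to avoid this localization entirely: it never changes the pair $(M^{c+\ep},M^{c-\ep})$, but flattens a relative cycle with the globally defined cutoff homotopy $H_t(v^-,v^+)=(v^-,(1-t\phi(v))v^+)$, which is $f$-nonincreasing and hence preserves the pair, then cuts with the crease construction along a small sphere in $V^-$, and finally disposes of the piece lying over the descending disk using the degenerate-chain formalism in degrees $j>k$ and the intersection with the stable disk $D^+$ to see freeness in degree $k$. If you wish to keep your strategy, the cleanest repair within the paper's toolkit is to borrow exactly that idea: replace your chart-only homotopy $(v^+,v^-)\mapsto(sv^+,v^-)$ by its cutoff version defined on all of $M^{c+\ep}$, or else work with a flow-adapted handle neighborhood so that the pushing lemma really does yield the two-sided homology equivalence your five-lemma step assumes.
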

\begin{proof}
Let $D_{2\sqrt{\ep}}$ denote the closed disk centered at $0$ of radius $2\sqrt{\ep}$ in $V$ and let $D^\pm(2\sqrt{\ep})$ denote the closed disk in $V^\pm$. By flowing $\sigma$ using the downward gradient flow, we may assume that the image of $\sigma$ is contained in $M^{c-\ep}\cup D_{\sqrt{\ep}}$.  By a small perturbation, we may also assume that $\sigma$ is transverse to $D^+(2\sqrt{\ep})$.\\\\
Our next task is to flatten out $\sigma$.  Pick a smooth function $$\phi:V \rightarrow [0,1]$$  Let $\phi$ be 1 on $D_{\sqrt{\ep}}$ and 0 outside $D_{2\sqrt{\ep}}$.   Consider the map $$H_t:V\rightarrow V$$ with  $$H_t(v^-,v^+)=(v^-,(1-t\phi(v))v^+)$$ and $t\in [0,1]$.  $H_0=Id$ while $H_1$ maps $V^+$ to 0 on $D_{\sqrt{\ep}}$.  $H_t\circ \sigma$ provides the desired homotopy that flattens out $\sigma$ near the origin.  Let  $\sigma_0=H_1\circ \sigma$.  Note that $$f(\sigma_0(p))\leq f(\sigma(p))$$  Thus, we can assume that points in $P$ that don't land in $M^{c-\ep}$ map to $V^-$ in the local coordinates.  Also note that $0$ is a regular value of this map since $\sigma$ was transverse to $D^+(2\sqrt{\ep})$. \\\\
We may cut our cycle $\sigma_0$ into two pieces $\sigma_0^- \sqcup \sigma_0^+$ by taking the preimage of a sufficiently small sphere $S^-(\delta)$ of radius $\delta < \sqrt{\ep}$ around the origin in $V^-$.  Here, we take  $\sigma_0^+$ to be the piece contained inside the ball $D^-(\delta)$ bounded by the sphere.  By using the crease construction, we have $[\sigma]=[\sigma_0^-\sqcup \sigma_0^+]$ in $H_j(M^{c+\ep},M^{c-\ep})$. Now  argument divides into cases depending on the dimension of the cycle. \\

\noindent Case $j<k$:  We have that $\sigma_0^+$ is empty and thus $\sigma_0=\sigma_0^-$ is contained in the region where $f<c$. It may be pushed to lie in $M^{c-\ep}$ by the flow.  Therefore, $H_j(M^{c+\ep},M^{c-\ep})=0$.\\ \\
Case $j=k$:  Here $\sigma_0^+$ is a finite collection of disks mapping diffeomorphically to $D^-(\delta)$.  Note that for $j=k$, $D^-(2\sqrt{\ep})$ defines an element in $H_k(M^{c+\ep},M^{c-\ep})$.  By replacing $\sigma$ with $\sigma\sqcup m[D_\ep^-]$, we can assume that $\sigma_0^+ \sim 0$ in $C_*(M^{c+\ep})$.  Therefore, $[\sigma]+m[D_\ep^-]$ is equivalent to a cycle strictly below $c$ and hence 0 in homology.  Thus, $D^-(2\sqrt{\ep})$ generates the homology in this dimension.  It's a free generator in $H_k(M^{c+\ep},M^{c-\ep})$ since its intersection with $D^+(\sqrt{2\ep})$ is 1.\\ \\
Case $j>k$:  In this final case, we note that since $\sigma_0^+$ maps to $D^-(\delta)$ it has small image as it is contained in a manifold of smaller dimension.  We have $$\partial \sigma_0^+=(\partial\sigma_0)^+\sqcup \sigma_0^0$$  Since $\partial\sigma_0 \sim 0$ we have $(\partial\sigma_0)^+ \sim 0$.  Also, $\sigma_0^0$ maps to $S^-(\delta)$ and hence has small image.  Therefore, $\sigma_0^+$ is degenerate. Thus, $\sigma$ is homologous to a cycle with image below the critical point and is therefore nullhomologous in $H_j(M^{c+\ep},M^{c-\ep})$.  

\end{proof}

More generally, we may assume that a critical level has $k$ critical points all of the same index.  In that case, $$H_*(M^{c+\ep},M^{c-\ep})\cong \mathbb{Z}^k$$  This proof is along the same lines as the one above.

\subsection{Self-Indexing Case}

In the previous section, we saw that if there is a unique nondegenerate critical point $y$ at $f(y)=c$ then
$H_{j}(M^{c+\ep},M^{c-\ep})=\mathbb{Z}$ when $j=ind(y)$ and $0$ else.  Suppose that the next
  critical point $z$ above $y$ is at $f(z)=d$ with $$ind(z)=ind(y)+1$$  Consider the composite map:
$$H_{j+1}(M^{d+\ep},M^{d-\ep})\xrightarrow{\partial_*}H_{j)}(M^{d-\ep})\xrightarrow{i_*}
 H_j(M^{d-\ep},M^{c-\ep})$$
Since we assumed there are no other critical points between $y$ and $z$, we have: $$H_*(M^{d-\ep},M^{c-\ep})\cong H_*(M^{c+\ep},M^{c-\ep})\cong \mathbb{Z}$$ 
 We wish to compute $i_* \circ \partial_*$. Since $H(M^{d+\ep},M^{d-\ep})$ is generated by the unstable disk $[D^-_z(\ep)]$,
  $$i_* \circ \partial_*([D^-_z(\ep)])=[\partial D^-_z(\ep)]$$ viewed as an element of $H_*(M^{d-\ep},M^{c-\ep})$.
  Using the gradient flow, we may push this cycle down to $M^{c+\ep}$.  As discussed above, the image
  in $H_*(M^{c+\ep},M^{c-\ep})$ is computed by counting intersections of the cycle with the disk $D^+_y(\ep)$ in
  a neighborhood of $d$.  This a version of the familiar statement that boundary operator may be computed by counting the intersections of attaching/belt spheres. \\\\ 
   Now, assume that $M$ is compact and $f$ is self indexing.  Thus, the critical points of index $i$ are on level $i$ of $f$. 
 We have a filtration $$\emptyset = M^{-1+\ep}\subset M^{0+\ep}\subset \cdots M^{n-1+\ep}\subset M^{n+\ep}=M$$
 with $H(M^{j+\ep},M^{j-1+\ep})=\mathbb{Z}^k$, where $k$ is the number of critical points of index $j$.
 
 \begin{definition}
 Let $C_*^f(M)=(\oplus_j C_j^f,\partial^f)$ be the free chain complex with $$C_{j}^f= H_*(M^{j+\ep},M^{j-1+\ep})$$
 The differential $\partial^f:C^f_*\rightarrow C^f_{*-1}$, $\partial^f=i_* \circ \partial_*$  arises from  composing
 $$\partial_*: H_{j}(M^{j+\ep},M^{j-1+\ep}) \rightarrow H_{j-1}(M^{k+1}) $$

  with
 $$i_*:  H_{j-1}(M^{j-1+\ep})\rightarrow H_{j-1}(M^{j-1+\ep},M^{j-2+\ep})$$
 Let $H_*^f(M)$ be the homology of the chain complex $C_*^f(M)$.
 \end{definition}

 \begin{theorem}
  Given a self-indexing Morse function $f$ as above, we have $H_*^f(M)\cong H_*(M)$
\end{theorem}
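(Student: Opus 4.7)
The plan is to adapt the standard cellular homology argument to this setting. The filtration $M^{0+\ep}\subset M^{1+\ep}\subset\cdots\subset M^{n+\ep}=M$ plays the role of a cellular skeleton, and the previous subsection has already computed the ``attaching map'' data: $H_j(M^{k+\ep},M^{k-1+\ep})=0$ for $j\neq k$ and $\mathbb{Z}^{n_k}$ for $j=k$. The only further tools required are the long exact sequence of a pair and its functoriality under inclusions, both of which are set up earlier in the paper.

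First I would prove, by induction on $k$, the vanishing lemma $H_j(M^{k+\ep})=0$ for $j>k$. The base case $M^{-1+\ep}=\emptyset$ is trivial, and the inductive step follows from the long exact sequence of $(M^{k+\ep},M^{k-1+\ep})$, which for $j>k$ sandwiches $H_j(M^{k+\ep})$ between $H_j(M^{k-1+\ep})=0$ (inductive hypothesis) and $H_j(M^{k+\ep},M^{k-1+\ep})=0$.

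Next I would read off the structure of the Morse differential $\partial^f=j_*\circ\partial_*$. By the vanishing lemma, $H_k(M^{k-1+\ep})=0$, so the long exact sequence of $(M^{k+\ep},M^{k-1+\ep})$ makes the natural map $i_*:H_k(M^{k+\ep})\hookrightarrow C_k^f$ injective with image equal to $\ker\partial_*^{(k)}$, where $\partial_*^{(k)}:C_k^f\to H_{k-1}(M^{k-1+\ep})$. Applied one level lower, the same argument shows $j_*:H_{k-1}(M^{k-1+\ep})\hookrightarrow C_{k-1}^f$ is injective. Hence $\ker\partial^f$ equals $\ker\partial_*^{(k)}=i_*\bigl(H_k(M^{k+\ep})\bigr)$, while $\operatorname{im}\partial^f=i_*\bigl(\operatorname{im}\partial_*^{(k+1)}\bigr)$, giving $H_k^f(M)\cong H_k(M^{k+\ep})/\operatorname{im}\partial_*^{(k+1)}$.

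Finally, I would identify this quotient with $H_k(M)$. The long exact sequence of $(M^{k+1+\ep},M^{k+\ep})$, together with $H_{k+1}(M^{k+\ep})=0$ from Step~1 and $H_k(M^{k+1+\ep},M^{k+\ep})=0$, yields a short exact sequence realizing $H_k(M^{k+1+\ep})$ as precisely $H_k(M^{k+\ep})/\operatorname{im}\partial_*^{(k+1)}$. For each further inclusion $M^{j-1+\ep}\hookrightarrow M^{j+\ep}$ with $j\geq k+2$, both adjacent relative groups vanish, so the induced map on $H_k$ is an isomorphism; chaining these produces $H_k(M)=H_k(M^{n+\ep})\cong H_k(M^{k+1+\ep})$, completing the identification. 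I do not anticipate a serious geometric obstacle beyond careful bookkeeping of these long exact sequences and checking that the connecting map $\partial_*$ appearing in the definition of $\partial^f$ is the same one showing up in the short exact sequence of Step~3, which is immediate from naturality of the pair sequence.
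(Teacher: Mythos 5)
Your proposal is correct and is essentially the same argument the paper intends: the paper simply states that, given the local computation of $H_*(M^{j+\ep},M^{j-1+\ep})$, the remaining algebra is identical to the proof of the cellular homology theorem and cites Hatcher, while you have written out that standard cellular argument (vanishing lemma by induction, identification of $\ker\partial^f$ and $\operatorname{im}\partial^f$ via the pair sequences, and the isomorphisms $H_k(M^{k+1+\ep})\cong H_k(M)$) in full. No gap; your write-up just makes explicit what the paper delegates to the reference.
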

\begin{proof}
Given the local computation, the remaining algebraic argument is identical to the proof of the cellular homology theorem (see for example \cite{Hatch}).
\end{proof}
Let us summarize what we have accomplished.  Our only assumption on $f$ is that it is Morse and not necessarily Morse-Smale.  From this we deduced the existence of a chain complex generated by the critical points whose homology is isomorphic to the geometric singular homology of $M$.  The arguments, as well as those in the next section, make no use of gluing theory for gradient flow trajectories that is crucial in the Floer construction of the Morse complex.  Furthermore, when applied to a suitable infinite dimensional variant of the theory, they give an alternative construction of the Floer chain complex.   

\subsection{General Case}
The results of the previous section are not sufficient for applications in Floer theory.  The main issue is
that the assumption that the Morse function is self-indexing is too restrictive.  Indeed, the perturbations in
Floer theory are usually taken to be small and it is not clear to us how to perturb the relevant functional to obtain a self-indexing
one.  In this section we explain  how given that $\nabla f$ is Morse-Smale, we can define  the Morse chain
complex and show that its homology coincides with the geometric cycle homology. \\\\
We will make use of the following basic compactness result (see \cite{Jost}):
\begin{theorem}
Let $x$ and $y$ be critical points of $f$.\\\\
Given a sequence of possibly broken trajectories $$\gamma_i:[a_i, b_i] \rightarrow M$$ with $\gamma_i(a_i)\rightarrow x$ and $\gamma_i(b_i)\rightarrow y$, some subsequence converges to a possibly broken trajectory between $x$ and $y$.   
\end{theorem}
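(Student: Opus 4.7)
The plan is to reduce the statement to a standard Arzel\`a-Ascoli argument combined with a careful local analysis at each critical point. The starting observation is that the energy of each $\gamma_i$ is bounded: since $\gamma_i(a_i)\to x$ and $\gamma_i(b_i)\to y$, the identity $\int_{a_i}^{b_i}|\gamma_i'|^2\,dt = f(\gamma_i(a_i)) - f(\gamma_i(b_i))$ gives a uniform bound converging to $f(x) - f(y)$. Since $f$ is proper and bounded below, all the trajectories lie in the compact set $K = f^{-1}([f(y)-1, f(x)+1])$ for $i$ large, which will let me extract convergent subsequences.

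First, I would reparametrize each $\gamma_i$ so that I have uniform control. A convenient choice is to fix a sequence of regular values $c_0 = f(x) - \delta > c_1 > \cdots > c_N = f(y) + \delta$ of $f$, chosen to separate the critical values between $f(y)$ and $f(x)$. Away from a small neighborhood $U$ of $\operatorname{Crit}(f)\cap K$, the quantity $|\nabla f|$ is bounded below on the compact complement, so on each flow segment that stays in $K\setminus U$ both the length and the time duration are uniformly bounded. On such segments I can extract a $C^1$-convergent subsequence by Arzel\`a-Ascoli, using that $\gamma_i' = -\nabla f \circ \gamma_i$ and higher derivatives are bounded on $K\setminus U$.

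The main obstacle, as usual in Morse compactness, is controlling what happens in the neighborhoods of critical points. For each critical point $z$ with $f(y) \le f(z) \le f(x)$, I use the Morse lemma to identify a neighborhood of $z$ with a quadratic model $-|v^-|^2 + |v^+|^2$. In these coordinates the flow is explicit, and a trajectory entering a small ball either exits in uniformly bounded time, or else has its entering point converging to the stable manifold of $z$, in which case the time to exit goes to $\infty$ and the trajectory splits into an incoming piece converging to $z$ and an outgoing piece departing from $z$. Since there are only finitely many critical values in the compact interval $[f(y), f(x)]$ and the total energy is bounded, only finitely many such breakings can occur. A diagonal extraction across the finitely many critical levels and the intervening regular regions yields a subsequence converging to a broken trajectory with vertices at some subset of the critical points between $x$ and $y$.

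Finally, I must check that the limit configuration is genuinely a broken trajectory from $x$ to $y$: the endpoints match $x$ and $y$ by continuity of $f$ and the convergence of $\gamma_i(a_i), \gamma_i(b_i)$, and the intermediate vertices are critical points by construction (they are the limits of points where the trajectory spends unbounded time). Concatenating the limiting pieces in order of decreasing $f$-value gives the desired broken trajectory, completing the compactness argument.
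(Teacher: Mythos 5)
The paper does not actually prove this statement: it is quoted as a ``basic compactness result'' with a citation to Jost and is used as a black box in the Morse--Smale section, so there is no internal argument to compare against. What you have written is the standard proof that backs such a citation, and in outline it is correct: the energy identity $\int|\gamma_i'|^2 = f(\gamma_i(a_i))-f(\gamma_i(b_i))$, properness of $f$ to confine everything to a compact set $K$, a lower bound on $|\nabla f|$ off neighborhoods of the (finitely many) critical points in $K$ to get bounded lengths and times there, Arzel\`a--Ascoli plus bootstrapping on those segments, local analysis at the critical points, and a diagonal extraction. (The fact that the $\gamma_i$ may themselves already be broken changes nothing: the energy identity sums over their pieces and the same extraction applies.) Two steps deserve more care than your sketch gives them. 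First, ``in these coordinates the flow is explicit'' is only literally true if the metric is Euclidean in the Morse chart, which you have not arranged; the robust version of the argument is that if the times spent by $\gamma_i$ in a small ball about a critical point $z$ are unbounded, then the uniform energy bound forces times $t_i$ with $|\nabla f(\gamma_i(t_i))|\rightarrow 0$, so a subsequence of $\gamma_i(t_i)$ converges to $z$, and reparametrizing around entry and exit times produces the incoming and outgoing limit pieces. Second, the closing sentence ``concatenating the limiting pieces in order of decreasing $f$-value'' hides the real content, namely that the limit has no gaps and genuinely runs from $x$ to $y$. The usual way to nail this down is to fix any regular value $c\in (f(y),f(x))$, observe that every $\gamma_i$ crosses the compact level set $f^{-1}(c)\cap K$ exactly once on each unbroken piece that spans that level, extract a convergent subsequence of crossing points, and show that the full trajectory through the limit point is contained in the limit object with $\alpha$- and $\omega$-limits at critical points; iterating over the finitely many critical levels in $[f(y),f(x)]$ and using $\gamma_i(a_i)\rightarrow x$, $\gamma_i(b_i)\rightarrow y$ to pin down the top and bottom vertices gives the required connectivity. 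With those two points made explicit your argument is a complete and standard proof of the cited theorem.
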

Assume we are given a proper Morse function $f$ which is Morse-Smale with respect to some metric on $M$.  The Morse-Smale condition implies that given critical points $x$ and $y$ with $ind(x) \leq ind(y)$, there are no gradient flow lines from $x$ to $y$.  In fact, there exist open neighborhoods $x\in U_x$  and $y\in U_y$ such that there are no gradient flow lines from $U_x$ to $U_y$.  Indeed, arguing by contradiction we can shrink the sets $U_x$, $U_y$ to obtain in the limit a possibly broken gradient flow line from $x$ to $y$.  
\begin{definition}
 A chain $\sigma$ is said to be $k-small$ if each critical point of index $j\geq k$ has a neighborhood $U$ such that
$U\cap \GF_t(\sigma)=\emptyset$ for all $t\geq 0$.
\end{definition}

\begin{definition}
 Let $C^k_*\subset C_*(M)$ be the subcomplex generated by $k$-small chains.  We have the filtration:
 $$ C_*^{0}\subset C_*^{1}\subset \dots C_*^{dim(M+1)}=C_*(M)$$
\end{definition}
\begin{lemma}
 $H_j(C^k_*)=0$ when $j>k$.
\end{lemma}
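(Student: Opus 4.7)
The plan is to prove the lemma by showing that every $k$-small $j$-cycle $\sigma:P\to M$ with $j>k$ is nullhomologous in $C^k_*$, using $k$-small homotopies that deform $\sigma$ onto the union $\bigcup_{\mathrm{ind}(p)<k}W^u(p)$.  That union is a stratified set of dimension at most $k-1<j$, so the resulting cycle has small image and is therefore degenerate by the earlier lemma on cycles with small image, hence lies in $Q(M)$ and represents zero in $C^k_*$.

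The first two steps concentrate $\sigma$ near low-index critical points.  The forward flow $H(p,t)=\GF_t(\sigma(p))$ defines a $(j+1)$-chain on $P\times[0,T]$; it is $k$-small because its forward orbit coincides with that of $\sigma$, and the usual boundary relation combined with $\partial\sigma\sim 0$ yields $\sigma\sim\GF_T(\sigma)$ in $C^k_*$ for every $T$.  I would then show that for $T$ sufficiently large $\GF_T(\sigma(P))\subset\bigcup_{\mathrm{ind}(p)<k}B_\ep(p)$: the $k$-smallness hypothesis forces each $x\in\sigma(P)$ to flow forward to a critical point of index $<k$, and the standard Morse-Smale compactness argument (using the compactness theorem cited earlier in this section) upgrades this to a uniform statement on the compact set $\sigma(P)$.

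Next I would flatten the concentrated chain.  In Morse coordinates $V=V^-\oplus V^+$ around each low-index critical point $p$ one has $W^u(p)=V^-$ of dimension $<k$.  Define a local homotopy $H_s(v^-,v^+)=(v^-,(1-s\phi(v))v^+)$ where $\phi$ is a bump function equal to $1$ on $B_\ep(p)$ and supported in a slightly larger Morse ball, extended by the identity outside; at $s=1$ the chain is projected onto $\bigcup_{\mathrm{ind}(p)<k}W^u(p)$.  The homotopy track is contained in $\bigcup B_\ep(p)$, and by Morse-Smale one has $p\notin\overline{W^s(q)}$ whenever $\mathrm{ind}(p)<k\leq\mathrm{ind}(q)$; thus for $\ep$ small the balls $B_\ep(p)$ may be arranged so that their forward flow avoids the prescribed $k$-smallness neighborhood of every index-$\geq k$ critical point, making the whole homotopy $k$-small.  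The resulting cycle $\sigma'$ has image in a set of dimension $\leq k-1<j$, so has small image; being a cycle with small image it is degenerate, whence $\sigma'\in Q(M)$ and $[\sigma]=[\sigma']=0$ in $H_j(C^k_*)$.

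The main obstacle I anticipate is the $k$-smallness verification in the flattening step: one must patch the local Morse-coordinate projections into a single globally defined smooth homotopy and check at every intermediate parameter $s\in[0,1]$ that the modified chain remains in a region whose forward flow misses the prescribed neighborhoods of index-$\geq k$ critical points.  This relies on the Morse-Smale separation $p\notin\overline{W^s(q)}$ for $\mathrm{ind}(p)<k\leq\mathrm{ind}(q)$, applied uniformly over the finitely many such pairs of critical points.
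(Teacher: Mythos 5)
Your mechanism for killing the cycle (deform it until it has small image, then invoke degeneracy) is in the same spirit as the paper's, but the step you use to get there contains a genuine gap: the claim that for $T$ sufficiently large one has $\GF_T(\sigma(P))\subset\bigcup_{\mathrm{ind}(p)<k}B_\ep(p)$ is false. Pointwise, every point of $\sigma(P)$ does flow to a critical point of index $<k$, but this convergence is not uniform and cannot be made uniform: a point of $\sigma(P)$ lying close to the stable manifold of an index $<k$ critical point $x$ hovers near $x$ for a long but finite time and then drifts away along $W^u(x)$ toward lower critical points, so at every finite time $T$ the image $\GF_T(\sigma(P))$ contains points in transit between critical points (for a connected chain whose points limit on two distinct critical points this is forced just by connectedness of the image). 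What the compactness theorem for broken trajectories actually gives is accumulation of $\GF_T(\sigma(P))$ on a union of unstable manifolds and connecting orbits of index $<k$ critical points --- being \emph{close} to that low-dimensional set is not ``small image'' in the sense of the definition. Your flattening homotopy $H_s(v^-,v^+)=(v^-,(1-s\phi(v))v^+)$ lives only in the Morse charts, so it leaves the transit portions of the chain untouched, and the resulting $\sigma'$ need not have small image. In effect your plan asserts in one step that a $k$-small cycle can be compressed onto $\bigcup_{\mathrm{ind}(p)<k}W^u(p)$; that compression statement is essentially the content of the lemma and does not follow from flowing plus chart-local projections.

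The paper's proof avoids exactly this by working one critical point at a time, and its concentration statement is correspondingly weaker: the flow pushes the chain into $U\cup M^{c-\ep}$, a small neighborhood $U$ of the \emph{single} critical point $x$ at the next level $c$ together with the sublevel set $M^{c-\ep}$ --- the sublevel part, which your claim omits, is unavoidable. Near $x$ (necessarily $\mathrm{ind}(x)\le k<j$, since index $\ge k$ points are avoided by $k$-smallness) one perturbs $\sigma$ to be transverse to the local stable disk, with the perturbation supported near $x$ so that Morse--Smale separation keeps it in $C^k_*$, then flattens onto $V^-$, cuts along a small sphere in $V^-$, and discards the inner piece, which is degenerate because $j>\mathrm{ind}(x)$; the remainder lies strictly below $c$ and one continues flowing. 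After finitely many critical levels the cycle is pushed below the minimum of the proper, bounded-below function $f$ and becomes empty. If you replace your uniform concentration claim by this level-by-level scheme (reusing the ``passing a critical point'' constructions exactly as you reuse the flattening map), the rest of your argument --- in particular your correct use of the Morse--Smale condition to keep the local perturbations $k$-small --- goes through.
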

\begin{proof}
 Given a cycle $$\sigma: P\rightarrow M$$ in $C^k_j$ let us see what happens as we try to push it down by the flow.
 As we attempt to push it past a critical point $x$, two things can occur.  If $ind(x)>k$, $\GF_t(\sigma)$ stays away from an entire
 neighborhood of the critical point.  Therefore, we focus on flowing $\sigma$ past a critical point with $ind(x)\leq k<j$.   We will perturb $\sigma$ to $\sigma'$ in a small neighborhood of $x$ to be transverse to the stable manifold of $x$.  Here, we use the local stable manifold $V^+$ of the local model $V^+$ provided by the Morse lemma. A crucial point is that this perturbation does not take us out of $C^k_*$.  Indeed, since the flow is Morse-Smale, a small neighborhood of $x$ contains no points flowing arbitrary close the critical points of index $\geq k$. Therefore the homotopy $$\Sigma:P\times [0,1]\rightarrow M$$ perturbing $\sigma$ has image different from $\sigma$ only in a small neighborhood of $x$.   We have $j>k$, therefore $\sigma'$ intersects the stable manifold in a manifold of dimension at least 1.  By the arguments in the section on passing
a critical points, we can modify $\sigma'$ to $\sigma''$ locally around $x$ so that $\sigma''$ lies below $x$ and $[\sigma'']=[\sigma]$ in $H_*(C^k)$.  By repeating this a finite number of steps, we can modify $\sigma$ to be empty and thus $0$ in $H_j(C^k_*)$.
\end{proof}

\begin{lemma}
  $H_j(C^k_*,C^{k-1}_*)=0$ for  $j\neq k$.
\end{lemma}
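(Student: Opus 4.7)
The argument divides according to whether $j>k$ or $j<k$. For $j>k$ the vanishing is formal. The long exact sequence of the pair $(C^k_*,C^{k-1}_*)$ gives
\[
H_j(C^k_*)\rightarrow H_j(C^k_*,C^{k-1}_*)\rightarrow H_{j-1}(C^{k-1}_*),
\]
and the previous lemma yields $H_j(C^k_*)=0$ since $j>k$, together with $H_{j-1}(C^{k-1}_*)=0$ since $j-1\ge k>k-1$. Exactness forces the middle group to be zero.

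For $j<k$ a direct geometric argument is required. Given a relative cycle $\sigma\in C^k_j$ with $\partial\sigma\in C^{k-1}_{j-1}$, the only obstruction to $(k-1)$-smallness comes from critical points of index exactly $k-1$: those of index $\ge k$ are already avoided by the $k$-smallness of $\sigma$, and those of index $<k-1$ are irrelevant. For each critical point $x$ of index $k-1$, the plan is to choose a small closed Morse ball $B$ around $x$, small enough that $\partial\sigma\cap B=\emptyset$, which is possible because $\partial\sigma$ is $(k-1)$-small and so avoids a fixed neighborhood of $x$. After arranging $\sigma$ to be transverse to $\partial B$, the restriction $\sigma\cap B$ is a relative cycle in $(B,\partial B)$; since $H_j(B,\partial B)=0$ for $j$ strictly less than the ambient dimension, one obtains a filling $\sigma\cap B=\partial\eta+\rho$ with $\eta$ a $(j+1)$-chain supported in $B$ and $\rho\subset\partial B$. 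The Morse-Smale condition forces every forward trajectory issuing from a sufficiently small ball around an index-$(k-1)$ critical point to flow into a critical point of index $\le k-2$, and hence to stay bounded away from every critical point of index $\ge k$; thus $\eta\in C^k_{j+1}$, and replacing $\sigma$ by $\sigma-\partial\eta$ yields a chain homologous to $\sigma$ in $C^k_*$ whose interior interaction with $B$ has been eliminated.

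Iterating this local surgery over the locally finite collection of critical points of index $k-1$ produces a chain homologous to $\sigma$ in $C^k_*$ that avoids a fixed neighborhood of every index-$(k-1)$ critical point, hence lies in $C^{k-1}_*$. This exhibits $[\sigma]=0$ in $H_j(C^k_*,C^{k-1}_*)$. The main technical obstacle is controlling the residual chain $\rho$ on $\partial B$: for $j<k-1$ the intersection of $\rho$ with the trace $W^s(x)\cap\partial B$ of the stable manifold is generically empty by dimension, so the cap-off chain automatically flows away from $x$ and no further work is needed; for $j=k-1$, however, $\rho$ generically meets this trace in a finite set of signed points whose forward orbit hits $x$, so an additional cancellation -- effected, as in the argument of Case $j=k$ of the Passing a Critical Point theorem, by adding a multiple of the local unstable disk to the cap-off -- is required before the filling yields a $(k-1)$-small modification. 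Once this refinement is in place, the local-to-global iteration completes the proof.
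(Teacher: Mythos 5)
Your $j>k$ half is fine, and is in fact a clean shortcut: the paper redoes the geometric pushing-down argument for all $j\neq k$, whereas you get $j>k$ formally from the previous lemma via the long exact sequence of the pair. The problem is the $j<k$ half, where there is a genuine gap at the final step: you conclude that a chain which ``avoids a fixed neighborhood of every index-$(k-1)$ critical point'' therefore ``lies in $C^{k-1}_*$.'' That implication is false. Being $(k-1)$-small is a condition on the entire forward flow $\GF_t(\sigma)$, $t\geq 0$, not on the image of $\sigma$ alone: a chain disjoint from a small Morse ball $B$ about $x$ can still meet the stable manifold $W^s(x)$ far from $x$ (or, after your surgery, the residual chain $\rho\subset\partial B$ can meet $W^s(x)\cap\partial B$), and then its forward flow enters every neighborhood of $x$. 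Your local excision only treats $\sigma\cap B$ and leaves untouched exactly the dangerous locus $\sigma\cap W^s(x)$ away from $B$, so the output of the iteration need not be $(k-1)$-small and the relative class has not been killed. For $j<k-1$ the fix is not surgery at all but a global transversality statement ($\dim\sigma<\mathrm{codim}\,W^s(x)$, so a generic perturbation makes $\sigma$ disjoint from the relevant stable manifolds, and then a compactness argument upgrades disjointness to smallness) --- and one must also check, as the paper does with its homotopy error term $\Sigma_{\partial P\times[0,1]}$, that such perturbations do not change the relative class. For $j=k-1$ your proposed repair (``adding a multiple of the local unstable disk to the cap-off'') does not produce a $(k-1)$-small chain either, since the unstable disk passes through the critical point itself; what actually saves this case in the paper's scheme is that the split-off multiple of the unstable disk already lies in the subcomplex one is quotienting by (its forward flow, by the Morse--Smale condition, avoids all critical points of higher index), so it is simply discarded in the relative complex rather than cancelled.

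More broadly, your approach is static where the paper's is dynamic. The paper pushes the chain down by the gradient flow and deals with each critical point as the chain passes it: after a perturbation (with error supported in the subcomplex) making the chain transverse to the local stable disk, either $j<\mathrm{ind}(x)$ and the intersection is empty so the chain flows past, or the cut-off piece near $x$ maps into the low-dimensional unstable disk and is degenerate or already lies in $C^{k-1}_*$, so it dies in the quotient; iterating, the chain flows below everything. If you want to salvage your cap-off argument you would have to combine it with exactly this flow mechanism (first flow $\sigma$ so that all of its interaction with $W^s(x)$ is concentrated in $B$, then control the residual on $\partial B$), at which point you have essentially reconstructed the paper's proof.
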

\begin{proof}
 The argument is quite similar to the proof of the previous lemma.  We are given a chain $\sigma \in C^k_j$ and 
 $\partial \sigma \in C^{k-1}_{j-1}$.   As before, we are trying to push our chain
 past a critical point $x$.  We need to check that all the constructions used in the lemma regarding passing a critical point carry over to this setting.  First we need perturb the chain in a neighborhood of $x$ to be
transverse to the stable manifold.  The perturbation gives a chain $$\Sigma:P\times [0,1]\rightarrow M$$ with
$\Sigma_{|P\times 1}=\sigma$ and  $$\partial \Sigma=\Sigma_{|P\times 1}\sqcup \Sigma_{P\times 0}\sqcup \Sigma_{\partial(P)\times[0,1]}$$
  Note that $\Sigma_{\partial(P)\times[0,1]} \in C^{k+1}_*$ since the perturbation is supported
 in a neighborhood of $x$ and thus is zero in $C^k_*/C^{k+1}_*$.  Therefore, we can perturb the cycle to be
transverse without changing its homology class.  Next, we project the cycle to manifold $V^-$.  This does not change our class.   If $j<ind(x)$ the intersection with $V^+$ is empty hence we can flow past it.  
 Otherwise we cut our $\sigma$ to produce $\sigma^+\sqcup \sigma^-$ with
 $\sigma^-$ strictly below the critical point.  Since $j> ind(x)$, $\sigma^+$ is degenerate and thus can be discarded.  Therefore we can flow
past any critical point $x$ without altering the homology class.  Eventually, the class is empty. 
\end{proof}
Take $D^{\pm}(x_m)$ to be a  disk in $V^{\pm}(x_m)$ around each critical point 
$x_m$ of index $k$ small enough to be in $U_{x_m}$.  Note that $D^-(x_m)$ is a cycle in $C^k_{k}/C^{k-1}_{k}$.
Indeed, the boundary of such a disk can flow only to a critical point of index $\leq k-1$ and thus contained in $C^k_{k-1}$.  Our next task is to verify that these disks freely generate $H_{k}(C^k_*,C^{k-1}_*)\cong \mathbb{Z}^n$.  We say that chain $\sigma$ lies below $S\subset M$ if $\GF_t(\sigma)$ is disjoint from $S$ for all $t \geq 0$.
\begin{lemma}
Any chain in $C_*^k$ can be pushed down by the flow to lie below $\cup_m \partial D^+(x_m)$
\end{lemma}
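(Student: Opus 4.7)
The plan is to show that the required disjointness is already implicit in the $k$-small condition built into the definition of $C^k_*$, so in fact no genuine push is needed and one can take the "flow" in the statement to be the identity. First I would fix $\sigma\in C^k_*$ and a critical point $x_m$ of index $k$. Since $k\geq k$, the $k$-small hypothesis supplies an open neighborhood $U\ni x_m$ with $\GF_t(\sigma)\cap U=\emptyset$ for every $t\geq 0$.

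Next I would invoke the Morse chart around $x_m$ to observe that every $q\in \partial D^+(x_m)\subset V^+(x_m)$ lies in the local stable manifold, so $\GF_s(q)\to x_m$ as $s\to\infty$ and in particular $\GF_s(q)\in U$ once $s$ is large enough. If, for contradiction, some $p\in P$ and $t\geq 0$ satisfied $\GF_t(\sigma)(p)=q\in\partial D^+(x_m)$, then $\GF_{t+s}(\sigma)(p)=\GF_s(q)\in U$ for that $s$, violating the $k$-small avoidance. Hence $\GF_t(\sigma)\cap\partial D^+(x_m)=\emptyset$ for every $t\geq 0$.

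Taking the union over the critical points of index $k$ then yields $\GF_t(\sigma)\cap\bigcup_m \partial D^+(x_m)=\emptyset$ for all $t\geq 0$, which is exactly the assertion that $\sigma$ lies below $\bigcup_m\partial D^+(x_m)$. The one point I would double-check is the upgrade from avoidance of a single small $U$ to avoidance of all of $\partial D^+(x_m)$, but this is nothing more than flow-invariance of $V^+(x_m)$ combined with the attraction property, so there is no substantive obstacle. In particular no transversality or Sard-type input is required; the lemma is essentially a tautological consequence of packaging "chains never run into high-index critical points under forward flow" into the very definition of the filtration.
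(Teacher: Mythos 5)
Your argument is internally consistent only under a literal reading of the definition of $k$-small (``each critical point of index $j\geq k$ has an avoided neighborhood''), and that reading cannot be the operative one in this section. If chains of $C^k_*$ really avoided a neighborhood of every index-$k$ critical point under forward flow, then the unstable disks $D^-(x_m)$ (which contain the fixed point $x_m$ itself) would not lie in $C^k_k$, contradicting the assertion made just before this lemma and collapsing the computation $H_{k}(C^k_*,C^{k-1}_*)\cong \mathbb{Z}^n$ on which the Morse complex is built; likewise the paper's own proof of this lemma deduces only $ind(y)\leq k$ from $\sigma\in C^k_*$, which shows that index-$k$ critical points are exactly the ones a chain of $C^k_*$ is still allowed to approach. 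Under the intended convention (forward flow avoids neighborhoods of critical points of index strictly greater than $k$), your proof has no neighborhood $U$ of $x_m$ to start from, and the stronger statement you actually prove (``no push is needed; take the flow to be the identity'') is false: a small stable disk $D^+(x_m)$, or any chain passing through $U_{x_m}$, lies in $C^k_*$ yet meets $\partial D^+(x_m)$ at time zero. So the lemma is not a tautology, and the content you have skipped is precisely what the paper supplies.

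That content is a compactness/Morse--Smale argument. Suppose there were arbitrarily large times $t_i$ at which $\GF_{t_i}(\sigma)$ meets some $\partial D^+(x_m)$. The corresponding trajectory segments have unbounded time length, so by the compactness theorem for gradient trajectories they pass arbitrarily close to some critical point $y$, and $ind(y)\leq k$ because $\sigma\in C^k_*$. If $y=x_m$, then near $y$ the value of $f$ is close to $f(x_m)$, hence strictly below the level of $\partial D^+(x_m)$, and since $f$ decreases along trajectories the segment can never afterwards reach $\partial D^+(x_m)$; if $y\neq x_m$, one obtains a flow segment from a small neighborhood $U_y$ of a critical point of index $\leq k$ into $U_{x_m}$ (this is where the requirement $D^+(x_m)\subset U_{x_m}$ enters), contradicting the Morse--Smale choice of the neighborhoods $U_x$ made at the start of the subsection. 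Your proposal uses none of these ingredients --- no compactness of broken trajectories, no monotonicity of $f$, no use of the neighborhoods $U_x$ --- so it does not establish the lemma in the form the rest of the argument needs.
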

\begin{proof}
Consider $\GF_t(\sigma)$ for $t$ very large.  We claim that $$im(\GF_t(\sigma))\cap (\cup_m\partial D^+(x_m))=\emptyset$$ for all $t$ sufficiently large.  By contradiction, assume there are large times $t_i$ and trajectories starting at points of $\sigma$ and ending on some $\partial D_m$.  By compactness such trajectories must approach some critical point arbitrarily closely somewhere along the way.  Let $y$ be such a critical point.  Note that $ind(y)\leq k$ since $\sigma\in C^k_*(M)$.  We claim that $ind(y)<k$.  First, if $y=x_m$, the value of $f$ along the trajectory would lie strictly below $\partial D_m^+$.  If $y\neq x_m$, part of the trajectory represents a flow line between small neighborhoods of critical points of the same index.  This is not possible since $D^+(x_m)$ is chosen to lie in $U_{x_m}$.
\end{proof}

\begin{lemma}
$H_{k}(C^k_*,C^{k-1}_*)\cong \mathbb{Z}^n$, where $n$ is the number of critical points of index $k$.
\end{lemma}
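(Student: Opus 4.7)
The plan is to show that the classes $[D^-(x_m)]$ for $m=1,\dots,n$ form a free basis of $H_k(C^k_*,C^{k-1}_*)$. First I would verify these are indeed relative cycles: since $D^-(x_m)\subset V^-(x_m)\cap U_{x_m}$, its boundary $\partial D^-(x_m)$ is a small sphere in the unstable manifold whose downward flow, by Morse--Smale and the choice of $U_{x_m}$, can only encounter critical points of index strictly less than $k$, so $\partial D^-(x_m)\in C^{k-1}_{k-1}$.

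For surjectivity, given a cycle $\sigma\in C^k_k$ with $\partial\sigma\in C^{k-1}_{k-1}$, I would first use the previous lemma to push $\sigma$ by the gradient flow until its image lies below $\cup_m \partial D^+(x_m)$. Then, working at each critical point $x_m$ in turn, I would apply the Morse lemma to obtain local coordinates $V^-(x_m)\oplus V^+(x_m)$ near $x_m$ and follow the $j=k$ step in the proof of the ``passing a critical point'' theorem. The local perturbation to make $\sigma$ transverse to $D^+(x_m)$ is supported in a neighborhood of $x_m$ and therefore does not leave $C^k_*$ modulo $C^{k-1}_*$, exactly as argued in the previous lemma. After flattening by the homotopy $H_t$, the piece $\sigma_0^+$ of $\sigma$ over a small $V^-(x_m)$-ball becomes a disjoint union of disks mapping diffeomorphically onto $D^-(x_m)$, with signed count $c_m=\sigma\cdot D^+(x_m)$. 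Subtracting $c_m[D^-(x_m)]$ cancels these disks, leaving a chain whose part near $x_m$ can be flowed past $x_m$. Iterating over all $m$ pushes the remainder into $C^{k-1}_*$, yielding $[\sigma]=\sum_m c_m[D^-(x_m)]$.

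For linear independence, I would use the intersection pairing with the stable disks. Because $D^-(x_m)$ and $D^+(x_{m'})$ are both chosen inside the disjoint neighborhoods $U_{x_m}, U_{x_{m'}}$, the Morse--Smale condition forces $D^-(x_m)\cap D^+(x_{m'})=\emptyset$ for $m\neq m'$, while for $m=m'$ the local Morse model gives a single transverse intersection at the origin. Thus the intersection numbers $c_{m'}(\cdot)=\,\cdot\,\cap D^+(x_{m'})$, which descend to $H_k(C^k_*,C^{k-1}_*)$ by the same Morse--Smale perturbation argument, send $[D^-(x_m)]$ to $\delta_{mm'}$. Any relation $\sum c_m[D^-(x_m)]=0$ therefore forces each $c_m=0$.

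The main obstacle is ensuring that the intersection functional $c_{m'}(\sigma)=\sigma\cdot D^+(x_{m'})$ is well defined on the \emph{relative} homology group, i.e.\ that perturbations needed to achieve transversality with $D^+(x_{m'})$ can be arranged inside $C^k_*$ and that boundary contributions coming from $C^{k-1}_*$ do not create spurious intersection points. This is handled exactly by the mechanism of the previous lemma: the boundaries of stable disks $D^+(x_{m'})$ lie in the basin of attraction of critical points of index $\leq k-1$ under reverse flow, while local perturbations near $x_{m'}$ stay in $C^k_*/C^{k-1}_*$ by Morse--Smale. Once this is in hand, surjectivity and independence together identify $H_k(C^k_*,C^{k-1}_*)$ with $\mathbb{Z}^n$.
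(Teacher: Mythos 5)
Your proposal is correct and follows essentially the same route as the paper: both rest on pushing relative cycles below $\cup_m \partial D^+(x_m)$ via the preceding lemma, on the local Morse-model analysis at each $x_m$ to express a cycle in terms of the unstable disks, and on the intersection pairing with the stable disks $D^+(x_m)$ (with $D^-(x_a)\cap D^+(x_b)=\delta_{ab}$) being well defined on $H_k(C^k_*,C^{k-1}_*)$. The paper merely packages this as an evaluation map $Ev$ to $\mathbb{Z}^n$ shown to be bijective, while you phrase it as the classes $[D^-(x_m)]$ generating and being independent; the content is identical.
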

\begin{proof}
We define a  map $$Ev:H_{k}(C^k_*,C^{k+1}_*)\rightarrow \mathbb{Z}^n$$
 which will turn out to be be an isomorphism.  By the previous lemma, we can push any $\sigma\in H_{k}(C^k_*,C^{k+1}_*)$ to lie below $\cup_m \partial D^+(x_m)$.  We may perturb such $\sigma$ to be transverse to each $D^+(x_m)$ by altering it near $x_m$.  Let $$Ev(\sigma)=(a_{x_1},a_{x_2},\cdots )$$ be the intersections of $\sigma$ with $D(x_i)$.  Note that this does not depend on the small perturbation since $\partial \sigma$ lies in $C_*^{k-1}$ and thus avoids $U_{x_m}$.  We now assert that this gives rise to a
well defined intersection with the $D^+(x_m)$.  To see this, take $\sigma$ below $\cup_m\partial D^+(x_m)$ and assume $\sigma=\partial \tau$ with $\tau \in C_*^k$.  Taking $T$ sufficiently large $\GF_T(\tau)$ lies below $\cup_m\partial D^+(x_m)$ and $\partial \GF_T(\tau)=\GF_T(\sigma)$.  However, $\sigma$ and $\GF_T(\sigma)$ are homotopic on the complement of $\cup_m\partial D^+(x_m)$.  Therefore, they have the same intersection with $\cup_mD^+(x_m)$.    Thus, $Ev$ vanishes on boundaries and is therefore well defined.  $Ev$ is surjective since $D^-(x_a)\cap D^+({x_b})=\delta_{ab}$.  $Ev$ is also injective since if $\sigma$ has zero algebraic intersection with all $D^+(x_m)$ we can modify it to lie below all critical points of index $k$, as in the previous lemmas.
\end{proof}

With these results in place, we can proceed to define the Morse complex as  $$C^f_i(M)=H_i(C^{i}_*,C^{i-1}_*)$$
The differential $\partial^f:C^f_*\rightarrow C^f_{*-1}$, $\partial^f=\partial_* \circ i_*$  arises from the connecting homomorphism:
 $$\partial_*: H_{k}(C^{k}_*,C^{k-1}_*) \rightarrow H_{k-1}(C^{k-1}_*) $$
and the one induced by inclusion:
 $$i_*:  H_{k-1}(C^{k-1}_*)\rightarrow H_{k-1}(C^{k-1}_*,C^{k-2}_*)$$
Geometrically, the differential may be interpreted as follows.  The generators for $H_k(C^k_*,C^{k-1}_*)$ can be taken to be small oriented disks $D^-_{x_m}$ in the unstable manifolds around the critical points as above.
Let $D^+_{y_l}$ be the stable cooriented disks around critical points of index $k-1$.  Take the intersections
$\GF_T(\partial D^-(x_m))\cap D^+(y_l)$ for $T$ large.   These give you the coefficients of the incidence matrix.
By the same arguments as in \cite{Hatch}, we have the following Morse Homology Theorem:
\begin{theorem}
There exists an isomorphism $$H_*(M)\cong H^f_*(M)$$
\end{theorem}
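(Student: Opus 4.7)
The plan is to mimic the standard proof that cellular homology computes singular homology, using the filtration $0 = C_*^{-1} \subset C_*^0 \subset \cdots \subset C_*^{\dim M + 1} = C_*(M)$ together with the two vanishing results already established in the preceding lemmas, namely $H_j(C^k_*) = 0$ for $j > k$ and $H_j(C^k_*, C^{k-1}_*) = 0$ for $j \neq k$. Since each $C^k_*$ is a subcomplex of the next, each pair produces a long exact sequence in homology, and the remaining argument becomes pure diagram chasing, identical to Hatcher's comparison of cellular and singular homology.

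First I would extract from the long exact sequence of $(C^k_*, C^{k-1}_*)$ the following three facts. (i) Because $H_j(C^k_*, C^{k-1}_*) = 0$ for $j \neq k$, the inclusion-induced map $H_j(C^{l}_*) \to H_j(C^{l+1}_*)$ is an isomorphism whenever $l \geq j+1$; iterating from $l = k+1$ up to $l = \dim M$ gives a natural isomorphism $H_k(C^{k+1}_*) \cong H_k(C_*(M)) = H_k(M)$. (ii) Applying $H_j(C^{k-1}_*) = 0$ for $j \geq k$ (which is the first vanishing shifted by one) to the pair $(C^k_*, C^{k-1}_*)$ shows that $i_*: H_k(C^k_*) \hookrightarrow H_k(C^k_*, C^{k-1}_*) = C^f_k$ is injective; the same holds in degree $k-1$, so $i_*: H_{k-1}(C^{k-1}_*) \hookrightarrow C^f_{k-1}$ is injective as well. (iii) Exactness of the pair $(C^{k+1}_*, C^k_*)$ at $H_k(C^k_*)$ shows that $H_k(C^k_*) \twoheadrightarrow H_k(C^{k+1}_*)$ is surjective with kernel equal to $\operatorname{im}(\partial_*: H_{k+1}(C^{k+1}_*, C^k_*) \to H_k(C^k_*))$.

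Second, I would identify the kernel and image of $\partial^f = i_* \circ \partial_*$ inside $C^f_k$. The injectivity of $i_*$ in degree $k-1$ (fact (ii)) forces $\ker \partial^f = \ker \partial_*$, which by exactness at $C^f_k$ equals the image of $i_*: H_k(C^k_*) \to C^f_k$; the injectivity of that same $i_*$ then identifies $\ker \partial^f$ with $H_k(C^k_*)$ itself. Meanwhile $\operatorname{im}(\partial^f: C^f_{k+1} \to C^f_k) = i_*(\operatorname{im}\partial_*) = i_*\!\bigl(\ker(H_k(C^k_*) \to H_k(C^{k+1}_*))\bigr)$ by fact (iii), which corresponds to the same kernel inside the identified copy of $H_k(C^k_*)$. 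Quotienting yields
$$H^f_k \;\cong\; H_k(C^k_*) \big/ \ker\bigl(H_k(C^k_*) \to H_k(C^{k+1}_*)\bigr) \;\cong\; H_k(C^{k+1}_*) \;\cong\; H_k(M),$$
where the middle isomorphism uses the surjectivity from fact (iii) and the last uses fact (i).

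I do not anticipate a real obstacle, since all of the geometric content, gradient flow, transversality, cutting by stable and unstable disks, the small-chain filtration, has already been absorbed into the three lemmas computing $H_j(C^k_*)$ and $H_j(C^k_*, C^{k-1}_*)$. The only step requiring care is bookkeeping around the definition of $\partial^f$: one must verify that the two copies of $i_*$ (in degrees $k$ and $k-1$) are injective, which is where the vanishing $H_j(C^k_*) = 0$ for $j > k$ does real work. Once that is noted, the remainder is a routine chase.
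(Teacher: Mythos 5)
Your argument is exactly the one the paper intends: the paper simply cites the standard cellular-homology comparison (Hatcher) applied to the filtration $C^0_*\subset C^1_*\subset\cdots\subset C_*(M)$ together with the two vanishing lemmas, and your proposal spells out that diagram chase correctly, including the key use of $H_j(C^k_*)=0$ for $j>k$ to get injectivity of the maps $i_*$. No gaps; this matches the paper's proof, just written out in full.
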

Finally, we would like to remark that there is a version of the Morse homology theorem for cohomology.  In this case, one considers the upward gradient flow which exists as long as $\nabla f$ is complete.  We can define $H_f^*(M)$ much like before.  We have:
\begin{theorem}
There exists an isomorphism $$H^*(M)\cong H_f^*(M)$$
\end{theorem}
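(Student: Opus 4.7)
The plan is to mirror the preceding proof of the Morse homology theorem, using the upward gradient flow in place of the downward one and proper cooriented cochains in place of compact chains. Specifically, I would define a decreasing filtration on $C^*(M)$ by declaring a proper cochain $\sigma:P\rightarrow M$ to be \emph{$\ell$-small} if for every critical point $x$ with $ind(x)\leq \ell$ there is a neighborhood $U_x$ such that $U_x\cap \GF^{up}_t(\sigma(P))=\emptyset$ for all $t\geq 0$, where $\GF^{up}_t$ denotes the upward gradient flow. Letting $C_\ell^*\subset C^*(M)$ be the corresponding subcomplex yields
\[
C^*(M)=C_{-1}^*\supset C_0^*\supset \cdots \supset C_{\dim M}^*,
\]
which is the cohomological analog of the filtration by $k$-small chains in the preceding section. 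The generators at the $\ell$-th associated graded piece will be the stable disks $D^+(x_m)$ for critical points $x_m$ of index $\ell$; these are legitimate cochains since $D^+(x_m)$ is closed (hence $\sigma$ is proper) and has codimension $\ell$ with a natural coorientation from the orientation of $V^-(x_m)$ in a Morse chart.

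I would then establish three cohomological analogs of the key lemmas. First, $H^j(C_\ell^*)=0$ for $j\leq \ell$, by flowing a cocycle upward: when encountering a critical point $x$ of index $\ell'>\ell$, the same local flattening construction as in the passing-a-critical-point theorem applies, with the collapse of $V^+$ replaced by a collapse of $V^-$ under upward flow. This produces a local piece mapping into $V^+$ of codimension $\ell'$; since the codimension $j$ of the cochain is strictly smaller than $\ell'$, the intersection piece is degenerate (dualizing the $j>k$ case of the earlier argument) and can be discarded. Second, $H^j(C_\ell^*,C_{\ell-1}^*)=0$ for $j\neq \ell$ by the same local analysis. Third, $H^\ell(C_\ell^*,C_{\ell-1}^*)\cong \mathbb{Z}^{n_\ell}$, freely generated by the stable disks $D^+(x_m)$ with $ind(x_m)=\ell$, with the intersection pairing $D^-(x_a)\cap D^+(x_b)=\delta_{ab}$ giving the dual evaluation map analogous to the $Ev$ map of the previous section.

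With these lemmas in hand, the remainder of the argument parallels the homology case: one sets $C_f^k(M)=H^k(C_k^*,C_{k-1}^*)$, defines the Morse codifferential $\partial^f:C_f^k\to C_f^{k+1}$ as the composition of the connecting homomorphism with the natural inclusion-induced map on relative cohomology, and invokes the standard exact couple argument identifying $H^*(M)$ with the homology of $(C_f^*(M),\partial^f)=H_f^*(M)$. Geometrically, the matrix entries of $\partial^f$ are computed by intersecting $\GF^{up}_T(\partial D^+(x_m))$ with $D^-(y_l)$ for $T$ large, counting upward gradient flow lines connecting critical points of consecutive indices. This is simply the transpose of the incidence matrix for the homology Morse complex, as one would expect.

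The main obstacle I anticipate is verifying that all the relevant constructions respect properness of cochains. In the compact-chain setting of the preceding theorem, small perturbations and the flattening maps $H_t$ automatically had compact domain, so properness was not an issue. For cochains one must check that the perturbations (supported in small neighborhoods of the critical points) and the crease/cutting constructions (cutting along level sets of $f$ near a critical point) preserve properness of the resulting maps. This should follow from the fact that all modifications are supported on compact neighborhoods where $\sigma$ is itself proper, but requires a careful local check in Morse coordinates to ensure the behavior at infinity is unchanged.
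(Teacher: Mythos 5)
Your proposal takes essentially the same route as the paper: the paper proves the cohomological statement only by remarking that one repeats the Morse homology argument with the upward gradient flow and proper cooriented cochains, and your dualization (filtration by smallness with respect to the upward flow, stable disks $D^+(x_m)$ as generators, the transposed incidence matrix, and the extra check that the local perturbation and flattening constructions preserve properness) is exactly that intended argument spelled out. Apart from a minor notational point that with your decreasing filtration the relative groups should be those of the pairs $(C_{\ell-1}^*,C_\ell^*)$, the approach matches the paper's.
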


\section{Equivariant Theory}
\subsection{Basic Construction}
The theory of geometric cycles has an equivariant generalization.  Below, we describe a construction which can be viewed as the geometric analogue of the Cartan construction for deRham theory.  A similar theory for singular homology appears in \cite{Jones}.  For this section, assume $M$ has a smooth action $$S^1\times M \rightarrow M$$  
\begin{definition}
Let $\twist$ be the map taking $\sigma:P\rightarrow M$ to $$\twist(\sigma):S^1 \times P \rightarrow  M$$ with $$\twist(\sigma)(e^{i\theta},p)=e^{i\theta}\sigma(p)$$
\end{definition}
We define our complex as follows. Let $$\Cep_*=(C_*(M)\otimes \mathbb{Z}[u],\partial_\twist)$$ with $u$ a formal variable of degree $-2$ and $$\partial_\twist(\sigma u^k)=(\partial \sigma) u^k+\twist(\sigma) u^{k+1}$$  The grading of $\sigma u^k$ is $dim (\sigma)-2k$.  
\begin{lemma}
$\partial_\twist^2=0$.  
\end{lemma}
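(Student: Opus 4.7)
The plan is to verify $\partial_\twist^2 = 0$ by direct expansion on a generator and then show that each term either vanishes, cancels with another, or represents a trivial chain (hence is zero in $C_*(M)$).

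First I would expand:
\begin{align*}
\partial_\twist^2(\sigma u^k) &= \partial_\twist\bigl((\partial \sigma)u^k + \twist(\sigma)u^{k+1}\bigr)\\
&= (\partial^2 \sigma)u^k + \twist(\partial\sigma)u^{k+1} + (\partial\twist(\sigma))u^{k+1} + \twist(\twist(\sigma))u^{k+2}.
\end{align*}
The first term is trivial by the earlier lemma that $\partial^2 \sigma$ is trivial for any $\sigma$, so it vanishes in $C_*(M)$.

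Next I would handle the two cross terms together. Since $\twist(\sigma)$ has domain $S^1 \times P$ and $S^1$ is closed, the standard product rule for the geometric boundary gives $\partial(S^1 \times P) = (-1)^{\dim S^1}\, S^1 \times \partial P = -S^1 \times \partial P$ with the induced orientation. Thus $\partial \twist(\sigma)$ is isomorphic to $-\twist(\partial \sigma)$ as oriented chains, and the middle two terms cancel. This step requires being careful about the outward-normal-first orientation convention, but is a direct local check on the chart $\Ar^{n-k}\times[0,1)^k$ for $P$ crossed with $S^1$.

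Finally, for the last term, the map $\twist(\twist(\sigma)): S^1\times S^1 \times P \to M$ sends $(e^{i\theta_1}, e^{i\theta_2}, p)$ to $e^{i(\theta_1+\theta_2)}\sigma(p)$. I would exhibit the involution $f(e^{i\theta_1}, e^{i\theta_2}, p) = (e^{i\theta_2}, e^{i\theta_1}, p)$, which clearly commutes with $\twist(\twist(\sigma))$ since addition in $S^1$ is commutative. Swapping two $1$-dimensional factors in an oriented product reverses the orientation (the sign being $(-1)^{1\cdot 1} = -1$), so $f$ is an orientation-reversing self-diffeomorphism. Hence $\twist(\twist(\sigma))$ is trivial and vanishes in $C_*(M)$. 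Putting the three observations together gives $\partial_\twist^2(\sigma u^k)=0$ in $\Cep_*$, and the main (very minor) obstacle is simply bookkeeping the orientation signs in the product-rule step.
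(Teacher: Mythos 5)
Your expansion and the cancellation of the cross terms $\twist(\partial\sigma)u^{k+1}$ and $\partial(\twist\sigma)u^{k+1}$ follow the paper exactly, and your sign bookkeeping ($\partial(S^1\times P)=-S^1\times\partial P$) is consistent with the paper's conventions; the $\partial^2\sigma$ term is likewise disposed of by the same earlier lemma. Where you genuinely diverge is the $\twist^2(\sigma)u^{k+2}$ term: the paper shows $\twist^2\sigma$ is \emph{degenerate}, observing that since the image of $\twist\sigma$ is already $S^1$-invariant, $\twist^2\sigma$ and its boundary have image contained in that of $\twist\sigma$, which lives on a domain of dimension one less, so the chain has small image and small/trivial boundary. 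You instead show $\twist^2\sigma$ is \emph{trivial}, via the swap of the two circle factors, which commutes with $(e^{i\theta_1},e^{i\theta_2},p)\mapsto e^{i(\theta_1+\theta_2)}\sigma(p)$ by the group-action property and reverses orientation with sign $(-1)^{1\cdot 1}=-1$. Both arguments are valid routes to showing the term vanishes in $C_*(M)$ (trivial chains and degenerate chains both lie in $Q(M)$); your involution argument is arguably cleaner since it needs no discussion of boundaries or of what covers the image, while the paper's small-image argument is the one that generalizes to situations where no convenient symmetry of the domain is available (it only uses that the image fails to grow under a second application of $\twist$).
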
  
\begin{proof}
We compute $$\partial_{\twist}^2\sigma=\partial_{\twist}(\partial(\sigma)+\twist(\sigma)u)=\partial^2(\sigma)+\twist(\partial \sigma)u+\partial(\twist \sigma)u+\twist^2(\sigma)u^2$$
Observe that since $\twist(P)$ is already $S^1$ invariant $$\image \twist^2\sigma=\image (\twist\sigma)$$ and $$\image(\partial \twist^2\sigma)=\twist(\partial \twist(\sigma))$$  By definition, this implies that $\twist^2(P)$ is degenerate and hence zero in $C_*(M)$.  $$\twist(\partial \sigma):S^1\times \partial P\rightarrow M$$ while $\partial(\twist \sigma):\partial(S^1\times P)\rightarrow M$.  Therefore, $\twist(\partial \sigma)=-\partial(\twist \sigma)$.
\end{proof}

\begin{definition}
Let $\Hep_*(M)$ be the homology associated with the complex $\Cep_*(M)$.
\end{definition}
We have the following analogue of the Gysin sequence:
\begin{theorem}
There is a long exact sequence:
$$\dots \rightarrow \Hep_{k+2}(M)\rightarrow \Hep_k(M) \rightarrow H_k(M)\rightarrow \dots$$
\end{theorem}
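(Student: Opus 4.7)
My plan is to derive the long exact sequence from a short exact sequence of chain complexes, namely
$$0 \to \Cep_{*+2}(M) \xrightarrow{\cdot u} \Cep_*(M) \xrightarrow{\pi} C_*(M) \to 0,$$
where the first map is multiplication by $u$ (sending $\sigma u^k$ to $\sigma u^{k+1}$, a degree $-2$ map) and $\pi$ is reduction modulo $u$, which kills all positive powers of $u$ and retains only the constant term. The desired long exact sequence will then follow from the standard zig-zag construction in homological algebra.

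First I would verify that both maps are chain maps. For multiplication by $u$, a direct calculation gives
$$\partiale(u\cdot \sigma u^k) = (\partial \sigma) u^{k+1} + \twist(\sigma) u^{k+2} = u \cdot \partiale(\sigma u^k),$$
so it commutes with $\partiale$. Reduction modulo $u$ kills the $\twist$-contribution in $\partiale$, so the induced differential on $C_*(M)$ is precisely the geometric boundary $\partial$ defined in Section \ref{homdef}. Injectivity of $\cdot u$ and exactness at $\Cep_*(M)$ (where the image of $\cdot u$ consists exactly of elements with no constant term in $u$) are immediate from the free $\mathbb{Z}[u]$-module structure.

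The main subtlety is to confirm that these operators descend through the equivalence relation defining $C_*(M)$, which reduces to checking that $\twist$ preserves $Q(M)$. A trivial chain $\sigma:P\to M$ with involution $f$ yields a trivial chain $\twist(\sigma)$ via $\mathrm{id}_{S^1}\times f$, and a chain $\sigma$ with small image witnessed by $g:T\to M$ yields $\twist(\sigma)$ with small image witnessed by $\twist(g):S^1\times T\to M$ (whose domain has dimension $\dim T + 1 < \dim P + 1 = \dim(S^1\times P)$). Because $\partial(S^1\times P) = S^1\times \partial P$, we also have $\partial\twist(\sigma) = \twist(\partial \sigma)$, so the degenerate condition is preserved. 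Hence $\partiale$, $\cdot u$, and $\pi$ are all well defined on the quotient complexes.

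Applying the zig-zag lemma then produces
$$\dots \to \Hep_{k+2}(M) \xrightarrow{\cdot u} \Hep_k(M) \xrightarrow{\pi_*} H_k(M) \xrightarrow{\delta} \Hep_{k+1}(M) \to \dots,$$
exactly as claimed. I do not anticipate a serious obstacle; the connecting homomorphism even admits a clean geometric description $\delta[\sigma] = [\twist(\sigma)]$, since a lift of a cycle $\sigma$ to $\Cep_k(M)$ satisfies $\partiale \sigma \equiv \twist(\sigma)\,u \pmod{Q(M)}$, and this pulls back along $\cdot u$ to the class $[\twist(\sigma)] \in \Hep_{k+1}(M)$. The only points requiring care are the degree-shift bookkeeping in the shifted complex $\Cep_{*+2}$ and the verification that all the maps respect $Q(M)$-equivalence.
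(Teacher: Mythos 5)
Your proposal is correct and follows exactly the paper's route: the long exact sequence is obtained from the short exact sequence of complexes $0\rightarrow \Cep_{*+2}(M)\xrightarrow{u}\Cep_*(M)\rightarrow C_*(M)\rightarrow 0$, which is precisely the paper's (one-line) proof, with your additional verifications (that multiplication by $u$ and reduction mod $u$ are chain maps and that $\twist$ preserves $Q(M)$) being routine details the paper leaves implicit.
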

\begin{proof}
We have an exact sequence of complexes: $$0\rightarrow \Cep_{*+2}(M) \xrightarrow{u} \Cep_{*}(M) \rightarrow C_*(M)\rightarrow 0$$
\end{proof}
\begin{theorem}
Given a smooth $S^1$-equivariant map $f:M\rightarrow N$, we have an induced map $$f_*:\Hep_*(M)\rightarrow \Hep_*(N)$$  

\end{theorem}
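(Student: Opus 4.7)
The plan is to define the map at the chain level by $f_*(\sigma u^k) = (f \circ \sigma)\, u^k$ and verify that this descends to $\Cep_*$ and commutes with $\partial_\twist$. First I would note that the functoriality theorem proved earlier for the ordinary complex shows that $\sigma \mapsto f \circ \sigma$ sends trivial chains to trivial chains and chains with small image to chains with small image (using that if $\sigma(P) \subset g(N)$ then $(f\circ\sigma)(P) \subset (f\circ g)(N)$). Consequently the map descends to a well-defined $\mathbb{Z}[u]$-linear map $f_*: C_*(M)\otimes \mathbb{Z}[u] \to C_*(N) \otimes \mathbb{Z}[u]$ respecting the degree shift.

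Next I would verify the chain map property $f_* \circ \partial_\twist = \partial_\twist \circ f_*$. By definition,
\[
f_*\bigl(\partial_\twist(\sigma u^k)\bigr) = f_*\bigl((\partial \sigma) u^k + \twist(\sigma) u^{k+1}\bigr) = (f \circ \partial \sigma)\, u^k + (f \circ \twist(\sigma))\, u^{k+1},
\]
while
\[
\partial_\twist\bigl(f_*(\sigma u^k)\bigr) = \partial_\twist\bigl((f\circ \sigma) u^k\bigr) = \partial(f \circ \sigma)\, u^k + \twist(f \circ \sigma)\, u^{k+1}.
\]
The identity $f \circ \partial \sigma = \partial(f \circ \sigma)$ is immediate since boundary is a local construction on the domain manifold with corners and $f$ acts only on the target. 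The remaining identity $f \circ \twist(\sigma) = \twist(f \circ \sigma)$ is exactly where $S^1$-equivariance enters: evaluating at $(e^{i\theta}, p)$ gives $f(e^{i\theta}\sigma(p)) = e^{i\theta} f(\sigma(p))$ by equivariance, which matches $\twist(f \circ \sigma)(e^{i\theta},p)$.

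Having established that $f_*$ is a chain map of complexes, it automatically induces a homomorphism on homology $f_*: \Hep_*(M) \to \Hep_*(N)$. I expect no real obstacle in this argument: the equivariance hypothesis is precisely tailored so that the twisted part of the differential is respected, and everything else reduces to the (already proven) functoriality for the non-equivariant complex. If desired, one can additionally observe that compositions and identities behave as expected, so $f_*$ is a functor from the category of smooth $S^1$-manifolds with equivariant maps to graded $\mathbb{Z}[u]$-modules, and that the long exact sequence of the previous theorem is natural with respect to $f_*$.
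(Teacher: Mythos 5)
Your proposal is correct and follows essentially the same route as the paper: define $f_*(\sigma u^k)=(f\circ\sigma)u^k$, invoke the non-equivariant functoriality for well-definedness, and use equivariance to check $\twist(f\circ\sigma)=f\circ\twist(\sigma)$, hence compatibility with $\partial_\twist$. No substantive differences.
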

\begin{proof}
We define $f_*(\sigma u^k)$ as $(f\circ \sigma) u^k$.  If $\sigma:P\rightarrow M$, we have $$\twist(f\circ \sigma):S^1\times P \rightarrow N$$ with $$\twist(f\circ \sigma)(e^{i\theta},p)=e^{i\theta}f(\sigma(p))$$  Equivariance of $f$ implies that  $$f_*(\twist(\sigma))(e^{i\theta},p)=f(e^{i\theta}\sigma(p))=e^{i\theta}f(\sigma(p))$$  Therefore, $\twist(f_*)=f_*(\twist)$ as desired.  
\end{proof}
The homotopy axiom is also easy to verify:
\begin{theorem}
Given an equivariant map $H:M\times [0,1]\rightarrow N$, we have that $H_*(\cdot,0)=H_*(\cdot,1)$.
\end{theorem}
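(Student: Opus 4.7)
The plan is to adapt the non-equivariant chain homotopy (from the earlier theorem stating $F_*(\cdot,0)$ is chain homotopic to $F_*(\cdot,1)$) to the equivariant setting by showing it respects the $\twist$-operator. First I would define a map $K : \Cep_*(M) \to \Cep_{*+1}(N)$ by
$$K(\sigma u^k) = H(\sigma)\, u^k,$$
where $H(\sigma):P\times[0,1]\to N$ is the chain $(p,t)\mapsto H(\sigma(p),t)$ used in the non-equivariant proof. This raises geometric dimension by one, so the total degree in $\Cep_*$ also raises by one, as required of a chain homotopy.

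The main computation is to verify the chain homotopy identity
$$\partial_\twist K(\sigma u^k) - K\partial_\twist(\sigma u^k) = (-1)^{|P|}\bigl(H(\cdot,1)_* - H(\cdot,0)_*\bigr)(\sigma u^k).$$
Expanding using $\partial_\twist(\alpha u^k) = \partial(\alpha)u^k + \twist(\alpha)u^{k+1}$ gives
$$\partial_\twist K(\sigma u^k) = \partial(H\sigma)\,u^k + \twist(H\sigma)\,u^{k+1},$$
$$K\partial_\twist(\sigma u^k) = H(\partial\sigma)\,u^k + H(\twist\sigma)\,u^{k+1}.$$
The $u^k$ terms combine, by the non-equivariant homotopy identity $\partial H(\sigma) = (-1)^{|P|}(H(\cdot,1)_*\sigma - H(\cdot,0)_*\sigma) + H(\partial\sigma)$, to produce the desired right-hand side. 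It remains to show the $u^{k+1}$ terms cancel, i.e. that $\twist(H\sigma)$ and $H(\twist\sigma)$ represent the same class in $C_*(N)$.

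For this cancellation, note that $\twist(H\sigma):S^1\times (P\times[0,1])\to N$ is given by $(e^{i\theta},p,t)\mapsto e^{i\theta}H(\sigma(p),t)$, while $H(\twist\sigma):(S^1\times P)\times[0,1]\to N$ is given by $(e^{i\theta},p,t)\mapsto H(e^{i\theta}\sigma(p),t)$. The canonical associator diffeomorphism identifies the domains as oriented manifolds with corners, and the equivariance hypothesis on $H$ forces the two maps to $N$ to agree pointwise: $H(e^{i\theta}\sigma(p),t)=e^{i\theta}H(\sigma(p),t)$. Hence the two chains are isomorphic, and their difference vanishes already at the level of $C_*(N)$.

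The main obstacle I expect is careful bookkeeping of orientations: checking that the associator identifying $S^1\times(P\times[0,1])$ with $(S^1\times P)\times[0,1]$ is orientation preserving under the conventions implicit in the paper, and verifying that $K$ descends to the quotient by $Q(M)$ (i.e.\ sends trivial and degenerate chains to elements of $Q(N)$, by the same argument already used to show that $\twist$ and ordinary homotopy descend). Once these checks are in place, $K$ is a genuine chain homotopy between $H(\cdot,0)_*$ and $H(\cdot,1)_*$ as maps $\Cep_*(M)\to\Cep_*(N)$, and passing to homology yields the theorem.
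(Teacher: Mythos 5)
Your proof is correct, and it is essentially the argument the paper intends: the paper simply asserts the equivariant homotopy axiom ("easy to verify") without writing it out, and the natural verification is exactly yours — extend the non-equivariant chain homotopy $\sigma\mapsto H(\sigma)$ by $u$-linearity and check compatibility with $\twist$, where equivariance gives the pointwise identity $H(e^{i\theta}\sigma(p),t)=e^{i\theta}H(\sigma(p),t)$ and the factor ordering $S^1\times P\times[0,1]$ is the same on both sides, so the $u^{k+1}$ terms cancel as chains. Your flagged checks (orientation of the associator, $K$ preserving $Q$) are the right ones and go through just as in the non-equivariant homotopy and equivariant functoriality proofs.
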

We now introduce two variants of the equivariant construction:
\begin{definition}
 Let $\Hei_*(M)$ be the homology associated with the complex $$\Cei_*(M)=(C_*(M)\otimes \mathbb{Z}[u,u^{-1}],\partial_\twist)$$  and $\Hem_*(M)$ be the homology of the complex $$\Cem_*(M)=C_*(M)\otimes \mathbb{Z}[u,u^{-1}]/u\cdot (C_*(M)\otimes \mathbb{Z}[u])$$
\end{definition}
Since localization is exact, we have:
\begin{theorem}
$\Hei_*(M)\cong u^{-1}\Hep_*(M)$
\end{theorem}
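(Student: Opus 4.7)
The plan is to recognize $\Cei_*(M)$ as the localization of $\Cep_*(M)$ obtained by inverting the formal variable $u$, and then appeal to the elementary fact that localization commutes with homology.

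First I would verify that the differential $\partial_\twist(\sigma u^k) = (\partial \sigma) u^k + \twist(\sigma) u^{k+1}$ on $\Cep_*(M)$ is $\mathbb{Z}[u]$-linear. This is visible by inspection, so $\Cep_*(M)$ is genuinely a chain complex of $\mathbb{Z}[u]$-modules, and the same formula extends uniquely to a $\mathbb{Z}[u,u^{-1}]$-linear differential on $C_*(M) \otimes \mathbb{Z}[u,u^{-1}]$. Consequently there is a natural isomorphism of chain complexes
$$\Cei_*(M) \;\cong\; \Cep_*(M) \otimes_{\mathbb{Z}[u]} \mathbb{Z}[u,u^{-1}].$$

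Second, because $\mathbb{Z}[u,u^{-1}]$ is a localization of $\mathbb{Z}[u]$, it is a flat $\mathbb{Z}[u]$-module; equivalently, one may present $\Cei_*$ as the directed colimit
$$\Cei_*(M) \;=\; \varinjlim \Bigl(\Cep_*(M) \xrightarrow{\cdot u} \Cep_*(M)[-2] \xrightarrow{\cdot u} \Cep_*(M)[-4] \xrightarrow{\cdot u} \cdots\Bigr),$$
and filtered colimits of abelian groups are exact. Either viewpoint yields the standard principle that for any chain complex $C_*$ of $\mathbb{Z}[u]$-modules,
$$H_*\bigl(C_* \otimes_{\mathbb{Z}[u]} \mathbb{Z}[u,u^{-1}]\bigr) \;\cong\; H_*(C_*) \otimes_{\mathbb{Z}[u]} \mathbb{Z}[u,u^{-1}].$$
Applying this with $C_* = \Cep_*(M)$, and using the notational convention $u^{-1}\Hep_*(M) := \Hep_*(M) \otimes_{\mathbb{Z}[u]} \mathbb{Z}[u,u^{-1}]$ already implicit in the theorem statement, produces the desired isomorphism $\Hei_*(M) \cong u^{-1}\Hep_*(M)$.

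There is no serious obstacle here: the content is purely formal homological algebra, and nothing geometric about $C_*(M)$ is used beyond the fact that $\partial_\twist$ respects multiplication by $u$. The only point that would need to be stated rather than assumed is that the localization-commutes-with-homology lemma, or equivalently the exactness of directed colimits, can be applied degreewise to the bigraded complex $\Cep_*(M)$, which is straightforward since the degree shift by $2$ under multiplication by $u$ does not interfere with the argument.
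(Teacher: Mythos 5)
Your proposal is correct and is essentially the paper's own argument: the paper simply invokes ``since localization is exact,'' and your write-up just makes explicit the identification $\Cei_*(M)\cong \Cep_*(M)\otimes_{\mathbb{Z}[u]}\mathbb{Z}[u,u^{-1}]$ and the flatness (or directed-colimit) justification behind that one-line claim. No gaps; the extra detail you supply is exactly what the paper leaves implicit.
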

\begin{theorem}
 There is a long exact sequence: $$\dots \rightarrow \Hep_{k+2}(M)\rightarrow \Hei_k(M) \rightarrow \Hem_k(M)\rightarrow \dots$$
\end{theorem}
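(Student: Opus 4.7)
The plan is to realize the desired long exact sequence as the homology LES associated with a short exact sequence of chain complexes
\[
0 \rightarrow uC^+_*(M) \rightarrow C^\infty_*(M) \rightarrow C^-_*(M) \rightarrow 0,
\]
in direct analogy with the earlier Gysin-type sequence, which came from $0 \to C^+_{*+2} \xrightarrow{u} C^+_* \to C_*(M) \to 0$. The first map is just inclusion of the subcomplex $uC^+_*(M)$ into $C^\infty_*(M) = C_*(M)\otimes \mathbb{Z}[u,u^{-1}]$. For the second map, one observes directly that $uC^+_*(M) = C_*(M)\otimes u\mathbb{Z}[u]$, hence
\[
C^\infty_*(M)/uC^+_*(M) = C_*(M) \otimes \bigl(\mathbb{Z}[u,u^{-1}]/u\mathbb{Z}[u]\bigr) = C^-_*(M),
\]
which is exactly the definition of the negative complex. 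I would verify that all three complexes carry the same twisted differential $\partial_\twist$ and that the maps are chain maps — this is immediate since both $\iota$ and the projection are $\mathbb{Z}[u]$-linear, and $\partial_\twist$ is $\mathbb{Z}[u]$-linear by inspection of the formula $\partial_\twist(\sigma u^k) = (\partial \sigma)u^k + \twist(\sigma)u^{k+1}$.

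Next I would identify $H_k(uC^+_*(M))$ with $\Hep_{k+2}(M)$. Multiplication by $u$ is an injective chain map $C^+_*(M) \to C^+_{*-2}(M)$ (recall $u$ has degree $-2$), and its image is precisely $uC^+_*(M)$. Therefore multiplication by $u$ defines an isomorphism of chain complexes $C^+_{*+2}(M) \xrightarrow{\cong} uC^+_*(M)$, yielding $H_k(uC^+_*(M)) \cong \Hep_{k+2}(M)$. Substituting into the zig-zag lemma applied to the short exact sequence above produces
\[
\cdots \rightarrow \Hem_{k+1}(M) \rightarrow \Hep_{k+2}(M) \rightarrow \Hei_k(M) \rightarrow \Hem_k(M) \rightarrow \Hep_{k+1}(M) \rightarrow \cdots,
\]
which is the stated sequence.

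There is no substantive geometric obstacle here; the entire argument is homological algebra once the short exact sequence is correctly set up. The only step requiring any care is bookkeeping: one must be careful to distinguish the quotient $C^\infty_*/C^+_*$ (which has basis $\{u^{-1}, u^{-2}, \ldots\}$) from the intended $C^-_*$ (which has basis $\{1, u^{-1}, u^{-2}, \ldots\}$), and accordingly use $uC^+_*$ rather than $C^+_*$ as the subcomplex. Once this degree shift is absorbed through the chain isomorphism $C^+_{*+2} \cong uC^+_*$, the connecting homomorphism $\Hem_k(M) \to \Hep_{k+1}(M)$ has the correct degree, and the sequence takes exactly the form claimed.
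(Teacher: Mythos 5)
Your proposal is correct and follows exactly the paper's argument: the long exact sequence is obtained from the short exact sequence $0\rightarrow \Cep_{*+2}(M)\xrightarrow{u}\Cei_*(M)\rightarrow \Cem_*(M)\rightarrow 0$, with the image of multiplication by $u$ being $u\Cep_*(M)$ and the quotient being $\Cem_*(M)$ by definition. You simply spell out the bookkeeping (the chain isomorphism $\Cep_{*+2}\cong u\Cep_*$ and $\mathbb{Z}[u]$-linearity of $\partial_\twist$) that the paper leaves implicit.
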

\begin{proof}
This follows from the short exact sequence of complexes:
$$0\rightarrow \Cep_{*+2}(M)\xrightarrow{u} \Cei_*(M) \rightarrow \Cem_*(M) \rightarrow 0$$
\end{proof}
\textbf{Remark.} One can show (see \cite{Jones2} for a discussion) that $\Hei_*(M)$ is the contribution to the homology coming from the fixed points of the action.  In particular, one has the localization formula: $$\Hei_*(M)\cong H_*(M^{fix})\otimes \mathbb{Z}[u,u^{-1}]$$ In the case $M$ is infinite dimensional, one must consider a completion of the groups with respect to $u$ for the localization formula to hold.  
\subsection{Cohomology and Pairing}
We can define cohomological versions of these groups that will turn out to be dual to the ones in the previous section.  
\begin{definition}
Let $v$ be a formal variable of degree 2.  Let $\HHep^*(M)$ (resp. $\HHem^*(M)$, $\HHei^*(M)$) be the group associated to the complex $(C^*(M)\otimes \mathbb{Z}[v],\partiale)$ (resp. $(C^*(M)\otimes \mathbb{Z}[v,v^{-1}],\partiale), (C^*(M)\otimes \mathbb{Z}[v,v^{-1}]/v(C^*(M)\otimes \mathbb{Z}[v]),\partiale))$. 
\end{definition}
Now, we discuss the cohomology pullback map which will use transversality theory.  We state the results for the ``$+$'' version, but they have straightforward generalizations to all the versions.
\begin{theorem}
Given an equivariant map $f:M\rightarrow N$, we have an induced map $f^*:\HHep^*(N)\rightarrow \HHep^*(M)$.  This construction is functorial.
\end{theorem}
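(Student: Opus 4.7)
The plan is to mirror the construction of the non-equivariant pullback from Section 5, but to check that each step is compatible with the operator $\twist$ and hence with the differential $\partiale$. Given an equivariant $f:M\rightarrow N$ and a cochain $\sigma:P\rightarrow N$ that is in general position with respect to $f$, I would first define $f^*(\sigma v^k) := (f^*\sigma)\,v^k$, where $f^*\sigma$ is the fiber product $f^{-1}(P)\subset M\times P$ equipped with its induced map to $M$ and its induced coorientation (as in Lemma \ref{Lem8}). Before anything else, I need to know that such $\sigma$'s are plentiful; this follows from the transversality lemma of Section 5, applied to $\sigma$ viewed as a proper map into $N$ together with the identity $f:M\rightarrow N$ (one perturbs $\sigma$ by an element of the ambient family $F$ on $N$).

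The key algebraic point is that $f^*$ commutes with $\partiale$ on representatives. The boundary part $f^*(\partial \sigma)=\partial f^*\sigma$ is standard, so the content is the identity
\[
f^*(\twist\sigma) \;=\; \twist(f^*\sigma).
\]
This is witnessed by the diffeomorphism $S^1\times f^{-1}(P)\rightarrow f^{-1}(\twist\sigma)$ sending $(e^{i\tet},(m,p))\mapsto (e^{i\tet}m,e^{i\tet},p)$, whose well-definedness uses precisely the equivariance $f(e^{i\tet}m)=e^{i\tet}f(m)$; one checks it preserves orientations since the $S^1$-factor appears identically on both sides. Before writing this down I would verify that transversality of $f$ with $\twist\sigma$ is automatic from transversality of $f$ with $\sigma$: at $(m,e^{i\tet},p)$ with $f(m)=e^{i\tet}\sigma(p)$, equivariance lets me translate by $e^{-i\tet}$ to the base point $(e^{-i\tet}m,p)$, where the transversality hypothesis gives $df(T_{e^{-i\tet}m}M)+d\sigma(T_pP)=T_{\sigma(p)}N$, and pushing forward by the $S^1$-action gives the required surjectivity (the extra $S^1$-orbit direction only helps). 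Combined, this gives $\partiale\circ f^* = f^*\circ \partiale$ on the subcomplex of cochains in general position.

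To pass to cohomology I need independence of the perturbation. Given two chain-level representatives $\sigma_0,\sigma_1$ of the same cohomology class, both in general position with $f$, I would apply the parametrized transversality lemma to the family $F:N\times \Pert\rightarrow N$ to produce a homotopy $\Sigma:P\times [0,1]\rightarrow N$ from $\sigma_0$ to $\sigma_1$ (composed with a suitable path in $\Pert$) that is also in general position with $f$. Then $f^*\Sigma$ is a cochain on $M\times [0,1]$; its slices at the endpoints recover $f^*\sigma_0$ and $f^*\sigma_1$, and the transversality of $\twist\Sigma$ with $f$ (again automatic by the equivariance argument above) gives the compatibility with the $\twist v$ piece of $\partiale$. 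This produces the required chain homotopy in $\Cep^*(M)$, showing $f^*$ descends to a well-defined map $f^*:\HHep^*(N)\rightarrow \HHep^*(M)$.

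Functoriality is then essentially formal: given equivariant $g:L\rightarrow M$ and $f:M\rightarrow N$, and a cochain $\sigma$ on $N$, one perturbs $\sigma$ to be in general position with $f$, then perturbs $f^*\sigma$ to be in general position with $g$ (the composition $f\circ g$ being equivariant and the $\twist$ compatibilities carrying over). The fiber-product description $(fg)^{-1}(P)=g^{-1}(f^{-1}(P))$ gives $(f\circ g)^*=g^*\circ f^*$ at the chain level, and the well-definedness above makes this an identity on cohomology. I expect the main obstacle in executing this plan to be the bookkeeping for independence of perturbation: one must confirm that the homotopies produced by the ambient perturbation family $\Pert$ of Section 5 give chain homotopies with respect to the full $\partiale$ and not merely $\partial$, which is exactly what the automatic transversality of $\twist\Sigma$ ensures.
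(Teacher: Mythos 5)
Your chain-level analysis of $\twist$ is fine and matches the paper: the diffeomorphism $(e^{i\tet},p,m)\mapsto(e^{i\tet},p,e^{i\tet}m)$ identifying $\twist(f^*\sigma_i)$ with $f^*(\twist\sigma_i)$ is exactly the paper's argument, and your observation that transversality of $f$ to $\twist\sigma$ is automatic from transversality of $f$ to $\sigma$ (translate a point of the fiber product by $e^{-i\tet}$ and use equivariance of $f$) is correct and even a bit more explicit than the paper.

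The genuine gap is in the perturbation step, which is the actual content of this theorem. You assert that cocycles all of whose coefficients are in general position with $f$ are ``plentiful'' by citing the transversality lemma of Section 5, but that lemma perturbs a single chain by an ambient isotopy $F(\cdot,a)$, and neither of the two ways of applying it works directly here: perturbing each coefficient $\sigma_i$ separately destroys the equivariant cocycle relations $\partial\sigma_{i+1}+\twist\sigma_i\sim 0$ (so the perturbed object no longer represents, or even is, a cocycle in $\Cep^*$), while applying one global ambient isotopy $\psi$ to all coefficients only preserves those relations if $\psi$ commutes with the $S^1$-action, since $\twist(\psi\circ\sigma_i)\neq\psi\circ\twist(\sigma_i)$ in general --- and equivariant perturbations need not achieve transversality. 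The paper's proof resolves this by an error-correcting cascade: choose $\tau_0$ with $\partial\tau_0\sim\sigma_0-\sigma_0'$, $\sigma_0'$ transverse to $f$, and replace the cocycle by $\sigma-\partiale\tau_0=\sigma_0'+(\sigma_1-\twist\tau_0)v+\cdots$, so the correction $\twist\tau_0$ is pushed into the next $v$-power; then perturb that coefficient, push the new correction into $v^2$, and so on, the process terminating because the correction terms eventually become degenerate. Your well-definedness argument has the same missing ingredient in a second guise: two cohomologous transverse representatives need not be related by an ambient homotopy $F(\sigma(p),\gamma(t))$ at all (their domains can be entirely different), so the parametrized transversality lemma does not produce your $\Sigma$; one must instead show $f^*$ kills equivariant coboundaries $\partiale\tau$ with $\tau$ not transverse, which again requires applying the cascading correction to $\tau$. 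Without keeping track of these $\twist$-error terms, the map is only defined on a subcomplex that you have not shown computes $\HHep^*(N)$.
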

\begin{proof}
Given a cocycle $\sigma_0+\sigma_1 v\cdots$ our first task is to perturb it to be transverse to $f$. We must not change the equivariant cohomology class during the perturbation.  Using usual transversality arguments, let $\partial \tau_0=\sigma_0-\sigma'_0$ with $\sigma'_0$ transverse to $f$.  Therefore, we may alter our class to be $$\sigma_0'+(\sigma_1-\twist \tau_0)v+\cdots$$  Now, perturb $\sigma_1-\twist \tau_0$ to be transverse to $f$.  This time the correction lies in the $v^2$ term.  We may continue in this fashion until the terms become degenerate.  Therefore, we may assume that $\sigma=\sigma_0+\sigma_1 v\cdots$ is transverse to $f$.  We set $$f^*(\sigma)=\sigma_0\cap f+\sigma_1 \cap f v\cdots$$
  We need to check that $f_*$ commutes with $\partiale$ on transverse chains.   It is clear that $$\partial(f^*(\sigma))=f^*(\partial \sigma)$$  If $$\sigma_i:P\rightarrow N$$ we have $$f^*(\sigma_i):Q\rightarrow M$$ where $Q=(p,m)$ with $f(m)=\sigma(p)$.  Therefore, $$\twist(f^*(\sigma_i)):S^1\times Q\rightarrow M$$    Meanwhile, $$f^*(\twist \sigma_i):Q'\rightarrow M$$ where $Q'=(e^{i\theta},p,m)$ with $f(m)=e^{i\theta}\sigma_i(p)$.  The diffeomorphism $$\phi:Q\rightarrow Q'$$ sending $(e^{i\theta},p,m)$ to $(e^{i\theta},p,e^{i\theta}m)$ provides the isomorphism of $\twist(f^*(\sigma_i))$ and $f^*(\twist(\sigma_i))$.  Therefore, $\partiale(f^*(\sigma))=f^*(\partiale \sigma)$ as desired.   
\end{proof}
By a similar pullback argument, we have the homotopy axiom:
\begin{theorem}
Given an equivariant map $H:M\times [0,1]\rightarrow N$, we have that $H^*(\cdot,0)=H^*(\cdot,1)$ 
\end{theorem}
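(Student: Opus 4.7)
The plan is to build an equivariant chain homotopy by fibering over the interval direction of $H$. Given a cocycle $\sigma_i:P\to N$ in general position with respect to $H:M\times[0,1]\to N$, form the fiber product $Q_i=(M\times[0,1])\times_N P$ and let $K(\sigma_i):Q_i\to M$ be the composition of the inclusion with the projection to $M$. A dimension count shows that $K$ lowers codimension by one, and the boundary of $Q_i$ decomposes into three pieces corresponding to $\partial P$, $t=0$, and $t=1$, giving
$$\partial K(\sigma_i)=\pm\bigl(H^*(\cdot,1)\sigma_i-H^*(\cdot,0)\sigma_i\bigr)+K(\partial \sigma_i),$$
in parallel with the non-equivariant homotopy axiom.

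To pass to the equivariant setting, I would first apply the iterative perturbation scheme used in the previous theorem on $f^*$, with transversality demanded against $H$ rather than against a single map. That is, perturb $\sigma_0$ to $\sigma_0'$ via $\partial\tau_0=\sigma_0-\sigma_0'$, adjust $\sigma_1$ to $\sigma_1-\twist\tau_0$, perturb the latter, and continue; the process terminates since higher-order terms eventually become degenerate. Once all coefficients are transverse to $H$, set $K(\sigma)=\sum_i K(\sigma_i)v^i$. The commutation $\twist K(\sigma_i)=K(\twist\sigma_i)$ follows from the diffeomorphism $(e^{i\theta},m,t,p)\mapsto(e^{i\theta},m,t,e^{i\theta}p)$ together with the equivariance of $H$ (acting trivially on the $[0,1]$-factor), by exactly the same argument used for the pullback theorem. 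Combining the two identities gives
$$\partiale K+K\partiale=H^*(\cdot,1)-H^*(\cdot,0),$$
proving that the two induced maps agree on $\HHep^*$.

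The main obstacle is running the iterative perturbation scheme against the $(m+1)$-dimensional family $H$ rather than against a single map, while keeping each correction term $\twist\tau_i$ within the equivariant cohomology class. Standard Sard-type transversality arguments supply the needed perturbations, and verifying that $K$ preserves triviality and degeneracy (so that it descends through $Q(M)$) follows the same template already established for $f^*$; the ``$\infty$'' and ``$-$'' versions of the theorem follow by the same construction applied termwise in $v$.
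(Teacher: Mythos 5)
Your proposal is correct and follows essentially the same route the paper intends: the paper dismisses this theorem with ``by a similar pullback argument,'' meaning exactly the prism construction you describe -- pull back along $H$ over $M\times[0,1]$, project to $M$ to get a codimension-lowering operator $K$, run the same iterative $v$-graded perturbation scheme as in the $f^*$ theorem, and verify $\twist K = K\twist$ via the diffeomorphism $(e^{i\theta},m,t,p)\mapsto(e^{i\theta}m,t,e^{i\theta},p)$ using equivariance of $H$. Your write-up simply makes explicit what the paper leaves implicit.
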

While transversality arguments may fail in the equivariant context, the previous results give a way of providing using proofs based on general position by keeping track of the "error" terms.  This idea will be significant in the construction of Chern classes below. \\\\  One may use transversality to pair homology with cohomology:

\begin{lemma}
There is a pairing $\HHem^a(M)\otimes \Hep_a(M)\rightarrow \mathbb{Z}$
\end{lemma}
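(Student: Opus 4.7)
The plan is to construct the pairing by matching the $u^k$ coefficient of an equivariant cycle with the $v^{-k}$ coefficient of an equivariant cocycle and summing signed transverse intersection numbers. A representative of $[\alpha] \in \Hep_a(M)$ is a finite sum $\alpha = \sum_{k \geq 0} \sigma_k u^k$ with $\dim \sigma_k = a + 2k$, and a representative of $[\beta] \in \HHem^a(M)$ is a finite sum $\beta = \sum_{k \geq 0} \tau_{-k} v^{-k}$ with $\mathrm{codim}\,\tau_{-k} = a + 2k$; both sums are finite since the relevant dimensions are bounded by $\dim M$. When every pair $(\sigma_k, \tau_{-k})$ is in general position, $\sigma_k \cap \tau_{-k}$ is a compact oriented $0$-manifold, and I would set
$$\langle \alpha, \beta \rangle \;:=\; \sum_{k \geq 0} \#(\sigma_k \cap \tau_{-k}) \;\in\; \mathbb{Z},$$
with dimensional matching ensuring that only the diagonal terms $k = j$ contribute when pairing $\sigma_k$ against $\tau_{-j}$.

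To arrange general position without changing classes, I would apply the iterative scheme already used for the cohomology pullback theorem. Perturb $\sigma_0$ via a generic homotopy to $\sigma_0'$ transverse to $\tau_0$, with bounding chain $\rho_0$ satisfying $\partial \rho_0 = \sigma_0 - \sigma_0'$. The replacement $\alpha \mapsto \alpha - \partial_\twist \rho_0 = \sigma_0' + (\sigma_1 - \twist \rho_0) u + \sigma_2 u^2 + \cdots$ preserves the homology class while introducing a $\twist \rho_0$ correction in the $u^1$ term; now iterate on the corrected $\sigma_1$ against $\tau_{-1}$, and so on. The process terminates because the series are finite.

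For well-definedness, assume $\alpha = \partial_\twist \gamma$ with $\gamma = \sum \gamma_k u^k$ in general position with $\beta$, and $\partiale \beta = 0$, which unpacks to $\partial \tau_{-k} = -\twist \tau_{-(k+1)}$ for all $k \geq 0$. Then $\sigma_k = \partial \gamma_k + \twist \gamma_{k-1}$ and
$$\langle \alpha, \beta \rangle = \sum_k \#(\partial \gamma_k \cap \tau_{-k}) + \sum_k \#(\twist \gamma_{k-1} \cap \tau_{-k}).$$
Stokes applied to the compact oriented $1$-manifold $\gamma_k \cap \tau_{-k}$, followed by the cocycle relation, rewrites the first sum as $\sum_k \#(\gamma_k \cap \twist \tau_{-(k+1)})$. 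The key geometric input is the equivariance diffeomorphism
$$\twist \gamma_k \cap \tau_{-(k+1)} \;\cong\; \gamma_k \cap \twist \tau_{-(k+1)}, \qquad ((e^{i\theta}, p), q) \longleftrightarrow (p, (e^{-i\theta}, q)),$$
which reverses orientation on the $S^1$ factor and therefore flips the intersection sign, producing exact cancellation of the two sums. A symmetric argument handles variation of $\beta$ by a coboundary, and the vanishing of signed counts on $Q(M)$ chains in dimension $0$ (trivial $0$-chains admit orientation-reversing involutions, while small-image $0$-chains must be empty) confirms that the pairing descends through the equivalence relation on $C_*$ and $C^*$.

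The main obstacle will be tracking orientation signs: three conventions must align -- the outward-normal sign in Stokes, the sign in the cocycle relation $\partial \tau_{-k} = -\twist \tau_{-(k+1)}$, and the orientation reversal on $S^1$ from the equivariance diffeomorphism. Under the determinant-bundle conventions set up in Section 2 these collectively yield the desired cancellation, but a careful parity check on $\dim \gamma_k$ is likely needed to verify uniformity across even and odd cases.
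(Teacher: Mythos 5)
Your proposal is correct and follows essentially the same route as the paper: define the pairing by summing signed counts of the diagonal intersections $\sigma_k\cap\tau_{-k}$, prove invariance via Stokes on the $1$-manifolds $\gamma_k\cap\tau_{-k}$ together with the sign-flipping identification $\twist\gamma\cap\tau\cong\gamma\cap\twist\tau$ given by $(e^{i\theta},p,q)\mapsto(p,e^{-i\theta},q)$, and achieve transversality by the iterative perturbation that pushes correction terms $\twist\rho$ into higher powers of $u$. The only caveat, which the paper makes explicit via its notion of strong transversality, is that your general-position hypothesis on $\gamma$ must include transversality of the $\twist$-ed chains as well, and the termination of the perturbation scheme is guaranteed because $\tau_{-l}$ is eventually empty.
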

\begin{proof}
Let us say that $\sigma$ is \textbf{strongly transverse} to $\tau$ $\tau \pitchfork \sigma$ and $\tau \pitchfork \twist(\sigma)$.  Given chains $\Sigma_{i=0}^n\sigma_i v^i\in C_*^+(M)$ and $\Sigma_{i=-n}^0\tau_i u^i \in C_*^-(M)$ with $\sigma_i$ strongly transverse to $\tau_{-i}$ we define the intersection as $$\Sigma_{i=0}^n \#(\sigma_i\cap \tau_{-i})\in \mathbb{Z}$$  The calculation below is based on the following observation.  Given $\sigma:P\rightarrow M$ and $\tau:Q\rightarrow M$ where $\sigma$ has dimension $i$ and $\tau$ has codimension $i-1$,  we have  $$\#(J(\sigma)\cap \tau)=\#((-1)^{|\sigma|+1}\sigma \cap J(\tau)) $$  Indeed, we have the bijection $$(e^{i\theta},p,q)=(p,e^{-i\theta},q)$$ for triples with $e^{i\theta }\sigma(p)=\tau(q)$.  \\\\
We have $$\#(\partial_{\twist}(\sigma)\cap \tau +(-1)^{|\sigma|} \sigma \cap \partial_{\twist}\tau)$$ $$=\Sigma_{i=0}^n\#((\partial \sigma_i+\twist\sigma_{i-1})\cap \tau_{-i}+(-1)^{|\sigma|}\sigma_i \cap  (\partial \tau_{-i}+\twist \tau_{-i-1}))$$ $$=\Sigma_{i=0}^n\#((\partial \sigma_i)\cap \tau_{-i})+(-1)^{|\sigma|}\#(\sigma_i \cap  \partial \tau_{-i})=\Sigma_{i=0}^n\#(\partial (\sigma_i\cap \tau_{-i}))=0$$  This calculation implies that the intersection count is well defined. 
To complete the construction we must check that the chains may be perturbed to be transverse without changing the homology class. Assume inductively that $\sigma_0$ through $\sigma_{k-1}$ are strongly transverse to all $\tau_j$.  Let $$\rho:P_k\times [0,1] \rightarrow M$$ be a generic homotopy of $\sigma_k$ such that $\rho(\cdot, 0)=\sigma_k$ and $\rho(\cdot, 1)=\sigma_k'$ is strongly transverse to $\tau_{-j}$. Taking $\partial_J \rho$ we may replace $\sigma_k$ by $\sigma_k'+\rho'$ where $\rho'$ is a generic homotopy of $\partial \sigma_k$.   However, $\partial \sigma_k \sim J(\sigma_{k-1})$ since $\sigma$ is a cycle.  Since $\sigma_{k-1}$ is strongly transverse to all $\tau_j$, we may assume by a generic choice of $\rho$ that $\rho'$ equivalent on the level of chains to a strongly transverse representative.   After finitely many steps, we obtain a chain where the only non transverse chains have coefficient $u^l$ so large that $\tau_{-l}$ is empty.
\end{proof}
\noindent \textbf{Remark.} It is not immediate how to define a cup product on $\HHep^*(M)$ directly.  Indeed, on $M\times M$ with the diagonal $S^1$-action, the operator $J$ does not decompose as an operator on the factors.  Thus, given cohomology classes $a,b\in H^*_+(M)$, $a\times b$ does not define an equivariant class in $M\times M$.  In fact, the difference $$\{a,b\} := \twist (a\cap b)-\twist (a)\cap b-(-1)^{|a|}a\cap \twist (b)$$ is a finite dimensional analogue of the string bracket of Chas-Sullivan (see \cite{Sullivan}).    

\subsection{Some Calculations}
Here are a few simple calculations:
\begin{lemma}
$\Hep_*(pt)\cong \mathbb{Z}[u]$; $\Hei_*(pt)\cong \mathbb{Z}[u,u^{-1}]$; $\Hem_*(pt)\cong \mathbb{Z}[u,u^{-1}]/\mathbb{Z}[u]$
\end{lemma}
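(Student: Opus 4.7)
The plan is to exploit the fact that $C_*(pt)$ is concentrated in degree zero and that the operator $\twist$ vanishes on it, so that the three equivariant complexes reduce to bare modules with zero differential and the stated isomorphisms follow by inspection.

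First I would strengthen the earlier computation of $H_*(pt)$ to show that $C_k(pt) = 0$ for every $k \geq 1$, not merely that the homology vanishes. For any $\sigma : P \to pt$ with $\dim P \geq 1$, the image is a single point covered by the identity map $pt \to pt$, which has strictly smaller dimension; hence $\sigma$ has small image. To see that $\sigma$ is degenerate I check that $\partial \sigma$ is a disjoint union of a trivial chain and one with small image: if $\dim P - 1 \geq 1$ then $\partial \sigma$ has small image by the same argument, while if $\dim P = 1$ then $\partial \sigma$ is a $0$-dimensional chain on $\partial P$, a finite set of oriented points with equal numbers of positive and negative entries (since $P$ is a compact oriented $1$-manifold with corners), so the involution pairing opposite endpoints of each component is an orientation-reversing self-diffeomorphism commuting with the constant map and $\partial \sigma$ is trivial. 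Thus $\sigma \in Q(pt)$, and only $C_0(pt) \cong \mathbb{Z}$ survives.

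Applying this very observation to the chain $\twist(\sigma) : S^1 \times P \to pt$, whose domain has dimension $\dim P + 1 \geq 1$, gives $\twist \equiv 0$ on $C_*(pt)$. The equivariant differential
\[
\partial_\twist(\sigma u^k) = \partial \sigma \cdot u^k + \twist(\sigma)\, u^{k+1}
\]
therefore collapses to $\partial \otimes \mathrm{id}$, which in turn vanishes because the only nonzero piece of $C_*(pt)$ sits in degree zero where $\partial$ is trivial. Reading off the homology of the resulting complexes $(\mathbb{Z}[u], 0)$, $(\mathbb{Z}[u, u^{-1}], 0)$, and the quotient complex, one obtains the three stated isomorphisms $\Hep_*(pt) \cong \mathbb{Z}[u]$, $\Hei_*(pt) \cong \mathbb{Z}[u, u^{-1}]$, and $\Hem_*(pt) \cong \mathbb{Z}[u, u^{-1}]/\mathbb{Z}[u]$, with the grading convention that $u^k$ sits in degree $-2k$.

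The only substantive point is the triviality of $\partial \sigma$ in the $\dim P = 1$ case of Step 1, which rests on the elementary fact that the signed count of boundary points of a compact oriented $1$-manifold vanishes; no real obstacle arises, and once $\twist$ is known to be zero on $C_*(pt)$ all three computations proceed simultaneously.
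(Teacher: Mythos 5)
Your proposal is correct and takes essentially the same route as the paper, whose proof is simply the observation that for a point both $\partial$ and $\twist$ vanish, so that all three equivariant complexes carry the zero differential. Your extra step showing $C_k(pt)=0$ for $k\geq 1$ (every positive-dimensional chain over a point is degenerate, the boundary of a $1$-chain being trivial via the endpoint-pairing involution) merely fills in the details the paper leaves implicit.
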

\begin{proof}
These follow directly from the fact that for a point, both $\twist$ and $\partial$ are zero.
\end{proof}
\begin{lemma}
$\Hep_*(S^1)\cong \mathbb{Z}$; $\Hei_*(S^1)\cong 0$; $\Hem_*(S^1)\cong \mathbb{Z}$
\end{lemma}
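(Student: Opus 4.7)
The plan is to use the $1$-dimensionality of $S^1$ to cut each computation down to a few terms, after identifying $\twist_*:H_*(S^1)\to H_{*+1}(S^1)$.

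For the key geometric input, I would check: taking a point $p_0:pt\to S^1$, the chain $\twist(p_0):S^1\to S^1$ is the identity, representing the generator of $H_1(S^1)=\mathbb{Z}$, so $\twist_*:H_0(S^1)\to H_1(S^1)$ is an isomorphism. On $H_1$, the chain $\twist(\operatorname{id}):S^1\times S^1\to S^1$ has small image in $S^1$ and empty boundary, so by the small-image-cycle lemma it vanishes in $C_2(S^1)$, giving $\twist_*=0$ on $H_1$. More generally, any $\partial$-cycle in $C_*(S^1)$ of dimension $\geq 2$ is small-image and hence $0$ in $C_*(S^1)$.

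For the main computation, I would verify the following reduction: given a $\partial_\twist$-cycle $\sigma=\sum_k\sigma_k u^k$, the relations $\partial\sigma_k=-\twist\sigma_{k-1}$ together with $\partial_\twist^2=0$ imply inductively that $\partial\sigma_k$ is a $\partial$-cycle in $C_*(S^1)$, hence for $|\sigma_k|\geq 2$ that $\sigma_k$ itself is a $\partial$-cycle and thus $0$ in $C_*(S^1)$. So every cycle is concentrated (modulo $\partial_\twist$-boundaries) in the $u^k$ with $d+2k\in\{0,1\}$. In $\Cep_*$, where only $k\geq 0$ is allowed, this restricts to $d\in\{0,1\}$ with $k=0$. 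At $d=0$ the cycle condition $\partial\sigma_1+\twist\sigma_0=0$ in $C_1(S^1)$ requires $[\twist\sigma_0]=0\in H_1(S^1)$, i.e., the signed count $[\sigma_0]\in H_0(S^1)$ vanishes; but then $\sigma_0=\partial\tau_0$, and $\sigma\sim 0$, giving $\Hep_0(S^1)=0$. At $d=1$ the $1$-cycle $\sigma_0$ automatically satisfies $\twist\sigma_0=0$ in $C_2(S^1)$ by the small-image-cycle lemma, and $\sigma$ is a $\partial_\twist$-boundary iff $\sigma_0=\partial\tau_0$ (no $\tau_{-1}$-terms are available in $\Cep$), giving $\Hep_1(S^1)\cong H_1(S^1)=\mathbb{Z}$.

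The remaining two cases follow the same pattern. In $\Cei_*$, boundaries can include a $\tau_{-1}u^{-1}$-term contributing $\twist\tau_{-1}$, which is a multiple of $[S^1]\in C_1(S^1)$ with arbitrary integer coefficient; combined with the Stokes identity $[\partial\tau_0]=0\in H_1(S^1)$, this lets one kill the $H_1$-class of $\sigma_0$ in both $d=0$ and $d=1$, and iterating into deeper $u$-levels yields $\Hei_*(S^1)=0$. In $\Cem_*$ the quotient by $u\mathbb{Z}[u]$ removes the $u^{k\geq 1}$-coefficients, so the obstruction $\twist\sigma_0=0$ at $d=0$ disappears: every $0$-chain is a cycle, and $\partial_\twist$-boundaries are exactly $\partial\tau_0$, yielding $\Hem_0(S^1)\cong H_0(S^1)=\mathbb{Z}$; for $d\geq 1$ a $\tau_{-1}u^{-1}$-correction (available because $k\leq 0$ is allowed in $\Cem$) kills every class as in the $\Cei$ case. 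The main subtlety is carefully tracking which $u^k$ powers are available in each complex; once $\twist_*:H_0(S^1)\xrightarrow{\sim}H_1(S^1)$ is established, the rest is mechanical.
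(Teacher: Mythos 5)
Your identification of $\twist_*$ on $H_0,H_1$ and your treatment of degrees $d=0,1$ for $\Hep_*$ follow essentially the paper's route, and your direct arguments for $\Hei_*$ and $\Hem_*$ are workable in outline. However, there is a genuine gap in the $\Hep$ computation: the step ``in $\Cep_*$, where only $k\geq 0$ is allowed, this restricts to $d\in\{0,1\}$ with $k=0$'' is not correct. For negative total degree $d$ the condition $d+2k\in\{0,1\}$ is achieved at a positive power $k>0$, which is perfectly allowed in $\Cep_*$: for instance $id\cdot u$ is a $\partial_\twist$-cycle of degree $-1$, and a $0$-chain of points times $u^k$ can occur in a cycle of degree $-2k$. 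So your argument never shows $\Hep_d(S^1)=0$ for $d<0$, which is part of the assertion $\Hep_*(S^1)\cong \mathbb{Z}$; that this vanishing is not formal is illustrated by $\Hep_*(pt)\cong\mathbb{Z}[u]$, which is nonzero in every negative even degree. The paper closes this case with a separate degree $<0$ argument: write $\sigma=u^k\sigma'$ with $\sigma'$ a cycle of degree $0$ or $1$, use the degree-$0$ vanishing for the former, and use $\partial_\twist(pt)=id\cdot u$ (equivalently $u[id]=0$) to kill the latter. Your own observation $\twist(p_0)=id$ contains exactly the needed identity, but you never apply it inside $\Cep_*$.

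A smaller issue: the blanket claim that $\sigma_k$ is a $\partial$-cycle whenever $\dim\sigma_k\geq 2$ is unjustified as stated. The relation is $\partial\sigma_k\sim -\twist\sigma_{k-1}$, and when $\sigma_{k-1}$ is $0$-dimensional the right-hand side is a multiple of $id$, which is not $\sim 0$ a priori; one must first argue, as you do at $d=0$, $k=1$ (and as the paper does), that $[\partial\sigma_k]=0$ in $H_1(S^1)$ forces the signed count of $\sigma_{k-1}$ to vanish, then correct by a $\partial_\twist\tau$ term and only afterwards conclude that the remaining dimension-$2$ component is a small-image cycle, hence degenerate. For $\dim\sigma_k\geq 3$ no cycle condition is needed at all, since such chains mapping to $S^1$ are automatically degenerate (their boundaries already have small image). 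This same counting step recurs at the bottom $u$-level in every negative degree, so repairing the reduction and filling the negative-degree gap go hand in hand.
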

\begin{proof}
$\Hep(S^1)$: Let $\sigma=\sigma_0+\sigma_1u+\sigma_2u^2 \cdots$ be a cycle and let $$id:S^1\rightarrow S^1$$ be the identity map.\\

Suppose $\sigma$ is a cycle of degree $>1$.  Therefore,  $\sigma_i$ is automatically small.  We have $\partial \sigma_0 \sim 0$, and thus $\sigma_0$ is degenerate while  $\sigma_i$ is degenerate for $i>0$ simply because $dim(\sigma_i)>2$. 

Suppose $\sigma$ has degree 1.  Since $dim(\sigma_i)>2$ for $i>0$, $\sigma_i\sim 0$ for $i>0$.  Furthermore, $\partiale \tau=\partial \tau_0$.  Therefore, in degree 1, we are reduced to the ordinary homology of $S^1$ which has $id$ as a free generator.

Suppose $\sigma$ has degree 0.  $\sigma_i\sim 0$ for $i>1$ since all such chains have $dim>2$ and are therefore degenerate.  Thus, $\sigma=\sigma_0+\sigma_1u$ with $\twist(\sigma_0)\sim -\partial\sigma_1$.  $\sigma_0$ is a finite collection of oriented points. Therefore, $\twist \sigma_0$ is a finite number of copies of $i$.  By the computation of the ordinary homology of $S^1$,  $\twist(\sigma_0)\sim -\partial\sigma_1$ implies that the algebraic count of $\sigma_0$ is trivial.  Thus, there exists $\tau_0$ with $\partial \tau_0=\sigma_0$.  Therefore, we can assume that $\sigma=\sigma_1u$.  In this case, $\sigma_1$ is degenerate since it has small image and $\partial \sigma_1\sim 0$.
 
Suppose $\sigma$ has degree $<0$.  We can factor $\sigma=u^k(\sigma')$ with $\partiale \sigma'=0$ and degree $\sigma'$ either $0$ or $1$.  If degree $\sigma'$ is 0,  we have $\sigma'=\partiale \tau$ hence $\sigma=\partial (u^k\tau)$.  Now, assume that the degree of $\sigma'$ is 1.  If $k>1$ we have $\sigma=u^{k-1}(u\sigma')$.  Since $u\sigma'$ is a cycle of degree $>1$, we have $\partiale\tau=u\sigma'$ and thus $\partiale \sigma=\partiale (u\tau)$.  Finally, we deal with the case $\sigma=u\sigma'$ where $\sigma'$ has degree 1.  We have $[\sigma']=n[id]$ in $\Hep_*(M)$.  However, $\partiale(pt)=id[u]$ which implies that $\sigma$ is a boundary.      \\\\  
$\Hei(S^1)$:  By localization, $$\Hei(S^1)=u^{-1}\Hep(S^1)=0$$  Of course, a direct argument is also possible.\\\\
$\Hem(S^1)$:  This follows from the long exact sequence connecting the groups.  Note that in this case $[pt]$ is a cycle since  $$\partial_{\twist}[pt]=id[u]=0$$ and $u=0$.  $$id:S^1\rightarrow S^1$$ on the other hand is a boundary since $[S^1]=\partial_{\twist }[pt]u^{-1}$.\\

\end{proof}

\subsection{Relation to the Borel Construction}
Let us first recall the Borel construction of equivariant homology.  Let $S^\infty$ be the unit sphere in an infinite complex Hilbert space.  $S^\infty$ is contractible and has a smooth $S^1$ action.  We let $\C P^\infty$ be the quotient $S^\infty/S^1$.  Let $M$ be a smooth manifold with a smooth $S^1$-action. The Borel model of $M$ is defined as $$M_{S^1}=(S^\infty \times M)/S^1$$  Let $$H_*^B(M)=H_*(M_{S^1})$$  
In this section we demonstrate a natural  isomorphism $$H^-_*(M)\cong H_*^B(M) $$
\textbf{Remark.}  Instead of introducing infinite dimensional manifolds one may take a finite dimensional approximation $S^k$ to $S^\infty$ and define $H_*^B(M)$ as the limit $$\lim_{k\rightarrow \infty} H_*((M\times S^k)/S^1)$$
As a preliminary step, we have the following lemma:
\begin{lemma}
Assume that $M$ has a free $S^1$ action.  We have $ H_*(M/S^1) \cong H_*^-(M)$ with the map induced by the projection  $$\pi: M\rightarrow M/S^1$$    
\end{lemma}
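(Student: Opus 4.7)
The plan is to produce a chain map $\pi_*: C_*^-(M) \to C_*(M/S^1)$ and prove it is a quasi-isomorphism by comparing the natural long exact sequence on $H_*^-$ with the classical Gysin sequence of the principal $S^1$-bundle.

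Define $\pi_*(\sum_{k\leq 0} \sigma_k u^k) := \pi \circ \sigma_0$. Verifying this is a chain map reduces to showing $\pi \circ \twist(\sigma_{-1})$ is trivial in $C_*(M/S^1)$, since the $u^0$-component of $\partial_\twist \xi$ is $\partial \sigma_0 + \twist(\sigma_{-1})$. Indeed $\pi \circ \twist(\sigma_{-1}): S^1 \times P_{-1} \to M/S^1$ sends $(e^{i\theta},p) \mapsto \pi(e^{i\theta}\sigma_{-1}(p)) = \pi(\sigma_{-1}(p))$ because $\pi$ collapses orbits, so it factors through the projection $S^1 \times P_{-1} \to P_{-1}$; the orientation-reversing involution $(e^{i\theta},p) \mapsto (e^{-i\theta}, p)$ commutes with this factored map, making the chain trivial.

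Next I would consider the short exact sequence of chain complexes
\[ 0 \to C_*(M) \xrightarrow{\iota} C_*^-(M) \to C_{*-2}^-(M) \to 0, \]
where $\iota(\sigma) := \sigma u^0$ is a chain map (since $\twist(\sigma) u \equiv 0$ in $C_*^-$) and the quotient is identified with $C_{*-2}^-$ via $u$-shift. A snake-lemma computation shows the resulting connecting map $H_{n-2}^-(M) \to H_{n-1}(M)$ sends $[\xi]$ to $[\twist(\xi_0)]$, where $\xi_0$ is the $u^0$-component. The classical Gysin long exact sequence for the bundle $\pi$ takes the form
\[ \cdots \to H_n(M) \xrightarrow{\pi_*^G} H_n(M/S^1) \xrightarrow{\cap e} H_{n-2}(M/S^1) \to H_{n-1}(M) \to \cdots \]
whose connecting map sends $[\alpha]$ to the pullback-bundle class $[\pi^{-1}(\alpha)]$. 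Applying $\pi_*$ on the $H^-$-terms and the identity on $H_*(M)$ yields a morphism of long exact sequences; a five-lemma argument, bootstrapped inductively from the vanishing of both theories in negative degrees, gives the required isomorphism.

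The main obstacle is verifying commutativity of the comparison squares, especially the one relating $H_n^-(M) \to H_{n-2}^-(M)$ with $\cap e: H_n(M/S^1) \to H_{n-2}(M/S^1)$; concretely, this asks $[\pi \circ \sigma_{-1}] = [(\pi \circ \sigma_0) \cap e]$ in $H_{n-2}(M/S^1)$. This reflects the standard identification of $\sigma_{-1}$ as the Euler-class obstruction to lifting $\pi \circ \sigma_0$ to a cycle in $M$: since $\partial \sigma_0 \sim -\twist(\sigma_{-1})$, the chain $\sigma_{-1}$ records exactly the obstruction captured by capping with $e$. The connecting-map square is easier: both $\partial_*[\xi]$ and the Gysin connecting of $\pi_*[\xi]$ equal $[\twist(\xi_0)] \in H_{n-1}(M)$, because the pullback $\pi^{-1}(\pi \circ \xi_0)$ is isomorphic to $S^1 \times P$ with map $(e^{i\theta},p) \mapsto e^{i\theta}\xi_0(p) = \twist(\xi_0)(e^{i\theta},p)$. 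The freeness of the action is essential throughout: it ensures $\pi$ is a principal $S^1$-bundle with a well-defined Euler class and that $S^1$-orbits are $1$-dimensional embedded submanifolds, so all orbit chains have the expected dimension.
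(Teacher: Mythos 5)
Your chain map is exactly the paper's map (push down the $u^0$-component; the $\twist$-term dies because $\pi$ collapses orbits, and your orientation-reversing involution $(e^{i\theta},p)\mapsto(e^{-i\theta},p)$ is a correct way to see the resulting chain is trivial). Your short exact sequence $0\to C_*(M)\to C^-_*(M)\to C^-_{*-2}(M)\to 0$, the computation of its connecting map as $[\xi]\mapsto[\twist(\xi_0)]$, and the induction base are also sound: since every $\sigma_k u^{k}$ with $k\le 0$ has degree $\dim\sigma_k-2k\ge 0$, the complex $C^-_*(M)$ vanishes in negative degrees, so both theories are trivially isomorphic there. The route, however, differs from the paper's, and it has a genuine gap at precisely the point you flag and then wave away: the commutativity of the square comparing $u$-multiplication $H^-_n(M)\to H^-_{n-2}(M)$ with $\cap\,e$ on $H_*(M/S^1)$ is asserted, not proved. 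Saying that $\sigma_{-1}$ ``records the Euler obstruction'' because $\partial\sigma_0\sim-\twist(\sigma_{-1})$ is a restatement of what must be shown. To close it you would need the Gysin sequence of the principal bundle realized \emph{inside this geometric theory}: represent $e$ by a codimension-two cocycle $Z\to M/S^1$ (say the zero locus of a generic section of the associated line bundle, via the paper's Thom-class machinery), make $\pi\circ\sigma_0$ transverse to it, and exhibit an explicit chain, built from $\sigma_0$, $\sigma_{-1}$ and a section of $\pi$ over the complement of $Z$, whose boundary is $(\pi\circ\sigma_0)\cap Z \sqcup -\,\pi\circ\sigma_{-1}$ modulo $Q(M/S^1)$. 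Invoking the classical Gysin sequence for singular homology does not supply this, because you must still match its three maps with your geometric $F_*$, the geometric cap product, and the preimage transfer; that matching (especially for $\cap e$) is essentially the whole content of the lemma.

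The paper avoids this entirely: after defining the same chain map, it checks the statement on a trivialized piece, $H^-_*(S^1\times U)\cong\mathbb{Z}\cong H_*(U)$ for a ball $U\subset M/S^1$ (this is where freeness and the earlier computation $\Hem_*(S^1)\cong\mathbb{Z}$ enter), and then runs a Mayer--Vietoris induction over a cover of $M/S^1$ by convex balls. If you want to keep the Gysin comparison, you will in any case need that local computation or an equivalent geometric construction of the Euler cocycle; at that point the Mayer--Vietoris argument already finishes the proof more economically.
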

\begin{proof}
Given a chain $\sigma=\sum_{i=0}^n \sigma_i u^{-i}\in H^-_*(M)$,  let $F_*(\sigma)=\sigma_0\in C_*(M/S^1)$.  To see that $F_*$ is a chain map note that $F_*(J(\sigma))$ is always degenerate and hence 0 in $C_*(M/S^1)$.  The rest of the argument is a standard application of the Mayer-Vietoris sequence.  Let $U\subset M/S^1$ be a small ball.  The restriction of $\pi$ to $U$ is equivalent to the projection $S^1\times U \rightarrow U$.  Since $$H_*^-(S^1\times U)\cong \mathbb{Z}$$ with the generator given by $p\times q$ for any $(p,q)\in U$, we have the isomorphism $H_*^-(S^1\times U) \cong H_*(U)$ induced by $F_*$.   We may cover $M/S^1$ by convex balls and proceed by induction to deduce the general case.  
\end{proof}

\begin{corollary}
The projection $\pi: M\times S^\infty \rightarrow M_{S^1}$ induces an isomorphism $$\pi_*: H^-_*(M\times S^\infty) \cong H_*(M_{S^1})$$  
\end{corollary}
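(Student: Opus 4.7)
The plan is to reduce this corollary directly to the previous lemma applied to the diagonal $S^1$-action on $M \times S^\infty$. Since $S^1$ acts freely on $S^\infty$, the diagonal action on $M \times S^\infty$ is automatically free, regardless of how $S^1$ acts on $M$. The quotient of this diagonal action is by definition the Borel space $M_{S^1}$, and $\pi$ is exactly the associated quotient map. If the previous lemma were literally applicable to $M \times S^\infty$, it would give $\pi_* : H^-_*(M \times S^\infty) \cong H_*(M_{S^1})$ immediately.

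The main obstacle is the infinite-dimensional nature of $S^\infty$: the previous lemma was proved via a Mayer--Vietoris argument on finite-dimensional manifolds, so it does not literally apply here. Following the remark right before the previous lemma, I would address this by working with the finite-dimensional filtration $S^1 \subset S^3 \subset S^5 \subset \cdots \subset S^\infty$. At each stage the diagonal $S^1$-action on $M \times S^{2k+1}$ is free and smooth, so the previous lemma applies verbatim and yields natural isomorphisms
\[
\pi_{k,*} : H^-_*(M \times S^{2k+1}) \;\cong\; H_*\bigl((M \times S^{2k+1})/S^1\bigr),
\]
compatible with the inclusions $S^{2k+1} \hookrightarrow S^{2k+3}$ induced by the natural embeddings of spheres.

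It remains to pass to the direct limit. On the Borel side, $H_*(M_{S^1})$ is defined (per the remark) as $\lim_{k\to\infty} H_*\bigl((M \times S^{2k+1})/S^1\bigr)$. On the geometric side I would verify that $H^-_*(M \times S^\infty) \cong \lim_{k\to\infty} H^-_*(M \times S^{2k+1})$. This step relies on the compactness built into our definition of a chain: any chain $\sigma : P \to M \times S^\infty$ has compact image, so $\sigma(P)$ is contained in $M \times S^{2k+1}$ for some finite $k$; likewise any chain $\tau$ witnessing an equivalence $\sigma \sim \sigma'$ has compact image, and the trivial/degenerate building blocks required by the definition of $Q(M \times S^\infty)$ are themselves compact and hence supported in some finite stage. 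Thus both cycles and boundaries stabilize at finite levels, giving the required identification of the direct limit, and combining everything produces the desired isomorphism $\pi_*$. The only subtle point will be checking that the equivalence relation $\sim$ commutes with the direct limit, which follows by the same compact-support argument applied to the chains and the auxiliary maps $g : T \to M \times S^\infty$ witnessing small image.
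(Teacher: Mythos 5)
Your overall strategy --- observe that the diagonal action on $M\times S^\infty$ is free and invoke the preceding lemma --- is exactly the paper's route; the paper simply applies that lemma verbatim to the Hilbert manifold $M\times S^\infty$ (its framework is set up, per the remark in Section 3, so that the theory and the Mayer--Vietoris argument make sense for infinite-dimensional Banach manifolds), and the corollary is immediate. Where your proposal genuinely diverges is in refusing to use the lemma in infinite dimensions and instead passing through the filtration $S^1\subset S^3\subset\cdots$, and it is there that a real gap appears: the claim that a chain $\sigma:P\to M\times S^\infty$ has image contained in $M\times S^{2k+1}$ for some finite $k$ is false. Here $S^\infty$ is the unit sphere of a Hilbert space with its norm topology, not the CW colimit of finite spheres, and a compact subset of a Hilbert sphere need not lie in any finite-dimensional subsphere; for instance $t\mapsto c\sum_n 2^{-n}\sin(nt)\,e_n$ (suitably normalized) is a smooth curve whose image spans an infinite-dimensional subspace. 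The same problem afflicts the other side of your comparison: $H_*(M_{S^1})$ in the corollary is the geometric homology of the honest infinite-dimensional quotient, not the limit $\lim_k H_*\bigl((M\times S^{2k+1})/S^1\bigr)$ of the remark, so identifying it with that limit requires the same (missing) stabilization argument. Consequently the direct-limit step, as written, does not go through at the chain level.

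The approach can be repaired, but not by compactness of images alone. Since finite-rank orthogonal (complex-linear, hence equivariant) projections $P_k$ converge uniformly on any compact subset of the sphere, the maps $x\mapsto P_kx/\lVert P_kx\rVert$ give smooth equivariant homotopies pushing any given cycle, and any bounding chain or small-image witness $g:T\to M\times S^\infty$, into a finite stage $M\times S^{2k+1}$; this yields the needed isomorphisms $H^-_*(M\times S^\infty)\cong\lim_k H^-_*(M\times S^{2k+1})$ and $H_*(M_{S^1})\cong\lim_k H_*\bigl((M\times S^{2k+1})/S^1\bigr)$ at the level of homology rather than by literal containment of chains. With that substitute for your stabilization claim, your finite-dimensional-approximation argument becomes a legitimate alternative to the paper's one-line application of the free-action lemma in the Hilbert-manifold setting.
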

The isomorphism $H^-_*(M)\cong H_*^B(M) $ now follows from the following lemma:
\begin{lemma}
The equivariant projection $\pi_1:M\times S^\infty \rightarrow M $ induces an isomorphism on homology.  
\end{lemma}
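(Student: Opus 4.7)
The plan is to compare the complexes $\Cem_*(M\times S^\infty)$ and $\Cem_*(M)$ via a filtration by powers of $u^{-1}$, reducing the equivariant statement to the non-equivariant fact that $\pi_1$ is a smooth homotopy equivalence (because $S^\infty$ is contractible).

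First I would introduce, for any $S^1$-space $X$ and each integer $N\ge 0$, the subspace $F_N \Cem_*(X)\subset \Cem_*(X)$ consisting of chains of the form $\sum_{i=0}^{N} \sigma_i u^{-i}$. Since $\partiale(\sigma_i u^{-i})=(\partial \sigma_i)u^{-i}+\twist(\sigma_i)u^{-(i-1)}$, the filtration is preserved and hence yields an increasing exhaustive filtration by subcomplexes. The successive quotient $F_N/F_{N-1}$ is isomorphic, up to a degree shift of $2N$, to $C_*(X)$ with the ordinary boundary $\partial$, because the $\twist$-contribution from the top term drops into $F_{N-1}$ and vanishes modulo it. A key observation is that a chain of total degree $k$ in $\Cem_*$ satisfies $\dim\sigma_i = k-2i \ge 0$, so only indices $i\le k/2$ appear; in each total degree the filtration therefore stabilizes at $F_{\lfloor k/2\rfloor}$, regardless of whether $X$ is finite-dimensional.

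Next, since $S^\infty$ is smoothly contractible (via straight-line projection onto the unit sphere from any chosen basepoint $s_0\in S^\infty$), the projection $\pi_1:M\times S^\infty \to M$ is a smooth homotopy equivalence with inverse $m\mapsto (m,s_0)$. By the (non-equivariant) homotopy axiom proved earlier, $(\pi_1)_*:H_*(M\times S^\infty)\to H_*(M)$ is an isomorphism, and so $\pi_1$ induces an isomorphism on the homology of each graded piece $F_N/F_{N-1}$.

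Then I would argue by induction on $N$ that $(\pi_1)_*:H_*(F_N\Cem_*(M\times S^\infty))\to H_*(F_N \Cem_*(M))$ is an isomorphism: the base case $N=0$ is the ordinary homology statement, since $F_0\cong C_*$, and the inductive step applies the five-lemma to the long exact sequence associated with $0\to F_{N-1}\to F_N \to F_N/F_{N-1}\to 0$. Finally, since in total degree $k$ the inclusion $F_N\hookrightarrow \Cem_*$ is an equality once $N\ge k/2$, the finite-$N$ isomorphisms immediately yield $\Hem_*(M\times S^\infty)\cong \Hem_*(M)$. The main concern would have been that the infinite-dimensionality of $M\times S^\infty$ could cause the filtration to fail to terminate, but the degree constraint $\dim\sigma_i=k-2i$ bounds $i$ purely in terms of the total degree $k$, which resolves that issue.
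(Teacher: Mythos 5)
Your argument is essentially the paper's own proof: you filter $\Cem_*$ by powers of $u^{-1}$, identify the graded pieces with the ordinary complex shifted by $2N$, note that the degree constraint stabilizes the filtration in each total degree, and run the five-lemma induction, with the contractibility of $S^\infty$ supplying the non-equivariant isomorphism. The only cosmetic difference is that you spell out the homotopy inverse $m\mapsto(m,s_0)$ explicitly; the substance is the same.
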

\begin{proof}
As $S^\infty$ is contractible, the result is a consequence of the following general fact.  Suppose  we have an equivariant map $f: M\rightarrow N$.   If  $$f_*:H_*(M)\rightarrow H_*(N)$$ is an isomorphism, then so is $$f_*:H_*^-(M)\rightarrow H_*^-(N)$$  The proof of this fact is the following filtration argument.  Let $C_*^-$ be the chain complex of either $M$ or $N$.  Let $F^kC_*^-\subset C^-_*$ be the subcomplex generated by sums $\sum_{i=0}^k \sigma_i u^{-i}$.  We have  $$F^kC_*^-/F^{k-1}C_*^- =C_{*-2k}$$ Therefore, since $C_a=0$ for $a <0$, in each degree $l$ the filtration $F^kC_l^-$ is stable for $k$ sufficiently large.  The conclusion follows from the 5-lemma by comparing the long exact sequences for the pair $(F^kC_*^-, F^{k-1}C_*^-)$.      
\end{proof}

\subsection{Chern Classes and Equivariant Cohomology}
Assume $$\pi: V\rightarrow M$$ is a complex vector bundle of dimension $n$.  In this section we describe a construction of Chern classes using equivariant cohomology.  In particular, transversality theory plays a major role in the construction.  Observe that $V$ has a natural $S^1$-action induced by the restricting the action of $\mathbb{C}^*$ to $S^1$ on each fiber.  We think of $M$ as having a trivial $S^1$-action. Let $$i:M\rightarrow V$$ be the zero section.   
\begin{theorem}
We have $i^*: \HHep^*(V) \cong \HHep^*(M)\cong H^*(M)\otimes \mathbb{Z}[v]$
\end{theorem}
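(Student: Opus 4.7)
The plan is to split the statement into two isomorphisms and handle them independently. For the second isomorphism $\HHep^*(M)\cong H^*(M)\otimes \mathbb{Z}[v]$, I will show that because $S^1$ acts trivially on $M$, the operator $\twist$ vanishes identically on $C^*(M)$. Concretely, for any cochain $\sigma:P\to M$, the map $\twist(\sigma):S^1\times P\to M$ satisfies $\twist(\sigma)(e^{i\theta},p)=\sigma(p)$, so it factors through the projection $S^1\times P\to P$ followed by $\sigma$. Its image is exactly $\sigma(P)$, but it is defined on a domain of strictly larger dimension, so $\twist(\sigma)$ has small image. Moreover, $\partial\twist(\sigma)=\twist(\partial\sigma)$ has small image for the same reason, so $\twist(\sigma)$ is degenerate, hence zero in $C^*(M)$. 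Consequently $\partial_\twist=\partial\otimes 1$ on $C^*(M)\otimes\mathbb{Z}[v]$, and the identification $\HHep^*(M)\cong H^*(M)\otimes\mathbb{Z}[v]$ follows.

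For the first isomorphism, I would exhibit $\pi^*$ as a two-sided inverse of $i^*$. Since $\pi\circ i=\mathrm{id}_M$ and pullback is functorial on equivariant cohomology (by the functoriality theorem stated earlier), $i^*\circ\pi^*=\mathrm{id}$ on $\HHep^*(M)$. For the other composition, construct the equivariant homotopy
\[
H:V\times[0,1]\to V,\qquad H(v,t)=(1-t)v,
\]
which is $S^1$-equivariant because the action is fiberwise scalar multiplication, so $H(e^{i\theta}v,t)=(1-t)e^{i\theta}v=e^{i\theta}H(v,t)$. We have $H(\cdot,0)=\mathrm{id}_V$ and $H(\cdot,1)=i\circ\pi$. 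The equivariant homotopy invariance theorem for cohomology (already proved in this section) then yields $\mathrm{id}^*=H^*(\cdot,0)=H^*(\cdot,1)=\pi^*\circ i^*$ on $\HHep^*(V)$. Combining the two compositions establishes that $i^*$ is an isomorphism with inverse $\pi^*$.

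The main subtle point, and therefore the step I would write out most carefully, is the verification that $\twist(\sigma)$ is degenerate rather than merely small-image: one must check that the small-image property is preserved under taking boundaries, which here is automatic because $\partial(S^1\times P)=S^1\times\partial P$ and the boundary map composes with $\twist$ in the expected way. Everything else reduces to applications of earlier results (functoriality of equivariant pullback, equivariant homotopy invariance, and the definition of degeneracy), so no new geometric input is required beyond the observation that scalar multiplication on $V$ provides a canonical equivariant deformation retraction onto the zero section.
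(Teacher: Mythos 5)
Your proposal is correct and takes essentially the same route as the paper, whose proof simply notes that $\pi\circ i=\mathrm{Id}$ while $i\circ\pi$ is equivariantly homotopic to the identity (via the fiberwise scaling homotopy you write down) and invokes the equivariant functoriality and homotopy-invariance results from earlier in the section. Your explicit check that $\twist(\sigma)$ is degenerate on $C^*(M)$ because the $S^1$-action on $M$ is trivial merely fills in the identification $\HHep^*(M)\cong H^*(M)\otimes\mathbb{Z}[v]$ that the paper leaves implicit.
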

\begin{proof}
Follows from the fact that $\pi \circ i=Id$, while $i\circ \pi$ is equivariantly homotopic to the identity.  
\end{proof}
 Recall that the Thom class $U$ is given by $i:M\rightarrow V$.  In this section, we view it as equivariant cohomology class in $\HHep^*(V)$.  
\begin{definition}
Let $Ch(V)=i^*(U)\in H^*_+(M)$ be the total Chern class.  
\end{definition}
Note that $Ch(V)$ decomposes as $$[ch_n(V)]+[ch_{n-1}(V)]v+[ch_{n-2}(V)]v^2\cdots $$ with $ch_{n-i}(V)$ of degree $2(n-i)$  Let us unravel this definition.  We need to modify $U$ to produce an equivariant class transverse to $i:M\rightarrow V$.  Take $\tau_0$ with $\partial \tau_0 \sim U-U'$ where $U'$ is transverse to $i$.  We have $$U-\partiale \tau_0=U'-(\twist\tau_0) v$$  Now, pick $\tau_1$ such that $\twist\tau_0-\partial \tau_1$ is transverse to $i$.  This time the correction term is $(\twist \tau_1 )v^2$.  Repeat until the correction term is degenerate and hence can be ignored.  We have produced a chain $$ U'+a_1 v+a_2 v^2+\cdots $$ such that each term is transverse to $i$.  We can pull this back to obtain $Ch(V)$ as desired.  In particular, this implies that $ch_n(V)$ is the Euler class of $V$.  \\\\
\textbf{Example.} Here is the simplest version of this construction.  Let us consider $\C \rightarrow pt$ as a line bundle over the origin.  We define the homotopy $$H:[0,1]\rightarrow \C$$ with $H(t)=t\subset \C$.  The resulting section is $1\subset \C$ that is now transverse to $0$ and its pullback is the empty set.  Thus, the first Chern class is 0.  According to our construction, we must examine the map $$\sigma: S^1\times [0,1]\rightarrow \C$$ given by mapping $(e^{i\theta},t)$ to $e^{i\theta}t$.  $\sigma$  does not have zero as a regular value. However, any other point in the open unit ball is a regular value of $\sigma$ and has intersection $1$ with the chain $\sigma$. A small perturbation of $\sigma$ produces a chain $\sigma'$ that has intersection 1 with the origin.   Therefore, the pullback of the zero section is $1\in H^*(pt)$.  A version of this argument applies generally to a complex vector bundle $V\rightarrow M$ to conclude that $ch(V)_0=1$.       

\begin{lemma}
We have $Ch(V_1\oplus V_2)=Ch(V_1)\cup Ch(V_2)$. 
\end{lemma}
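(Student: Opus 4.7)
The plan is to identify the Thom class of $V_1\oplus V_2$ as a transverse intersection of pulled-back Thom classes of $V_1$ and $V_2$ inside $V_1\oplus V_2$, and then pull the identity back to $M$ via the zero section, where equivariant cup product makes sense because $M$ carries the trivial $S^1$-action.

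First I would set notation. Let $i_j:M\rightarrow V_j$ be the zero sections, let $i:M\rightarrow V_1\oplus V_2$ be the diagonal zero section, and let $\pi_j:V_1\oplus V_2 \rightarrow V_j$ be the projections; all four maps are $S^1$-equivariant (with diagonal action on $V_1\oplus V_2$). Recall $U_{V}\in \HHep^*(V)$ is by definition the cochain $i_V:M\rightarrow V$. The key geometric observation is that inside $V_1\oplus V_2$ the submanifolds $j_2(V_2)=\{0\}\oplus V_2$ and $j_1(V_1)=V_1\oplus\{0\}$ are equivariantly transverse, and their fiber product is precisely the zero section. Translating this into the cochain language used to define the pullback,
\begin{equation*}
\pi_1^*(U_{V_1}) \;=\; j_2, \qquad \pi_2^*(U_{V_2}) \;=\; j_1, \qquad \pi_1^*(U_{V_1})\cap \pi_2^*(U_{V_2}) \;=\; i,
\end{equation*}
so on the level of equivariant cochains on $V_1\oplus V_2$ one has $U_{V_1\oplus V_2} = \pi_1^*(U_{V_1})\cap \pi_2^*(U_{V_2})$.

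Next I would apply $i^*$ to both sides. Since $M$ carries the trivial $S^1$-action, for any cochain $\sigma:P\rightarrow M$ the map $\twist(\sigma):S^1\times P\rightarrow M$ factors through $\sigma$ and hence has small image, so $\twist$ vanishes on $C^*(M)$. Thus $\HHep^*(M) = H^*(M)\otimes \mathbb{Z}[v]$ with $\partiale=\partial$, and the ordinary cup product on $H^*(M)$ extends $\mathbb{Z}[v]$-linearly to an honest product on $\HHep^*(M)$; this is the product meant in the statement. By functoriality of equivariant pullback (established earlier) and the identity $\pi_j\circ i = i_j$, one has
\begin{equation*}
i^*\bigl(\pi_j^*(U_{V_j})\bigr) \;=\; (\pi_j\circ i)^*(U_{V_j}) \;=\; i_j^*(U_{V_j}) \;=\; Ch(V_j).
\end{equation*}
Combined with the compatibility of pullback with transverse intersection (pullback of a fiber product is the fiber product of pullbacks), this gives $Ch(V_1\oplus V_2) = i^*(U_{V_1\oplus V_2}) = Ch(V_1)\cup Ch(V_2)$ in $H^*(M)\otimes\mathbb{Z}[v]$.

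The main obstacle is the equivariant transversality bookkeeping. To make sense of $\pi_1^*(U_{V_1})\cap\pi_2^*(U_{V_2})$ as an equivariant cohomology class, one must perturb each factor via the iterative procedure from the proof of the equivariant pullback theorem, replacing a non-transverse cochain by a transverse representative at the cost of introducing correction terms in higher powers of $v$. These perturbations must be carried out compatibly so that both factors remain equivariantly transverse to each other and to $i$, and one must check that the resulting intersection represents a class independent of the choices. Once this is in place, the straightforward geometric identity above, together with the fact that $\twist$ kills $C^*(M)$, collapses the argument to the stated multiplicativity.
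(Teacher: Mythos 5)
Your reduction hides the real difficulty inside the sentence you describe as ``equivariant transversality bookkeeping,'' and that is where the proof actually lives. To write $U_{V_1\oplus V_2}=\pi_1^*(U_{V_1})\cap\pi_2^*(U_{V_2})$ and then apply $i^*$, you need the intersection of two equivariant classes on $V_1\oplus V_2$ (with its diagonal $S^1$-action) to be a well-defined class in $\HHep^*(V_1\oplus V_2)$, compatible with equivariant pullback. The paper's remark at the end of the pairing subsection says precisely that this is not immediate: $\twist$ is not a derivation with respect to intersections, the defect being the bracket $\{a,b\}=\twist(a\cap b)-\twist(a)\cap b-(-1)^{|a|}a\cap\twist(b)$. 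Concretely, after you run the iterative perturbation on each factor you obtain $\partiale$-cocycles $\sum_k a_k v^k$ and $\sum_l b_l v^l$, but the termwise intersection $\sum_{k,l}(a_k\cap b_l)v^{k+l}$ is in general not a $\partiale$-cocycle, so ``the resulting intersection'' is not yet a class whose pullback you may take; and you cannot avoid perturbing, because your specific representatives $j_1,j_2$ are nowhere transverse to $i$ (the zero section is contained in both). So the gap is a missing idea, not bookkeeping.

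The paper's own proof is structured exactly to dodge this obstruction. It passes to the external product over $M\times M$, where the Thom class of $V_1\oplus V_2$ is $U_1\times U_2$, and it only ever forms products in which one factor is a cochain carrying an $S^1$-action (the zero section $U_2$, or $U_i'=\pi_i^*(Ch(V_i))$); for such products the identity $\twist(\sigma\times\tau)\cong\twist(\sigma)\times\tau$ holds via $(e^{i\theta},p,q)\mapsto(e^{i\theta},p,e^{i\theta}q)$, yielding a Leibniz rule in this restricted setting, whence $\partiale(\tau_1\times U_2)\sim U_1\times U_2-U_1'\times U_2$; the factors are replaced one at a time and the result is pulled back along the diagonal. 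If you insist on your internal formulation inside $V_1\oplus V_2$, you would need to prove the analogous compatibility of $\twist$ with intersections against the invariant representative $j_1$, and then against an invariant representative of $\pi_1^*(Ch(V_1))$ -- which essentially reconstructs the paper's argument. As written, your proposal defers the one step that carries the content of the lemma.
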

\begin{proof}
Let $U_i$ be the equivariant cochain in $C_+^*(V_i)$ given by the zero section $$s_i:M\rightarrow V_i$$ Consider the vector bundle $V_1\oplus V_2$ over $M\times M$. The cochain $$U_1\times U_2  \in C_+^*(V_1\oplus V_2)$$ given by the zero section  $i:M\times M \rightarrow V_1\oplus V_1$ is the Thom class of $V_1\oplus V_2$.  Since, $$\pi_i^*:\HHep^*(M)\rightarrow \HHep^*(V_i)$$ is an isomorphism, we can take $\pi_i^*(Ch(V_i))=U'_i$ as a cocyle that also represents the Thom class $U_i$.  In other words, $$U_i-U'_i \sim \partial_J \tau_i$$    We claim that $U_1'\times U_2' $ represents the Thom class of $V_1\oplus V_2$.  This follows from the Leibniz rule for $\partial_J$.  Indeed, one must check that $J(\sigma\times \tau)=J(\sigma)\times \tau$ in the special case where $\tau$ has an $S^1$-action.  If $\sigma:P\rightarrow V_1$ and $\tau:Q \rightarrow V_2$, the map sending $(e^{i\theta}, p, q)$ to $(e^{i\theta}, p, e^{i\theta}q)$ identifies    $J(\sigma\times \tau)$ with $J(\sigma)\times \tau$.  As an application of this, we have that $$\partial_J(\tau_1\times U_2)\sim U_1\times U_2 -U_1'\times U_2$$  and thus $U_1\times U_2$ is homologous to $U_1'\times U_2$.  Similarly, $U_1'\times U_2$ is homologous to $U_1'\times U_2'$. The desired conclusion now follows from pulling back $Ch(V_1\oplus V_2)$ to $M$ via the diagonal.     
\end{proof}  

\begin{theorem}
$Ch(V)$ agrees with the usual definition of the total Chern class. 
\end{theorem}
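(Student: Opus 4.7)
The plan is to verify that $Ch(V)$ satisfies the standard axiomatic characterization of the total Chern class. Recall (Hirzebruch, Milnor--Stasheff) that the Chern classes are the unique assignment $V \mapsto c(V)$ satisfying: (i) naturality under pullback, (ii) the Whitney sum formula $c(V_1 \oplus V_2) = c(V_1) c(V_2)$, (iii) $c_i(V) = 0$ for $i > \mathrm{rank}(V)$, with $c_0 = 1$, and (iv) normalization: $c_1$ of the tautological line bundle $L \to \C P^1$ generates $H^2(\C P^1)$. Axiom (ii) is the content of the previous lemma; (iii) and the identity $c_0 = 1$ follow from degree considerations in the decomposition $Ch(V) = \sum_{i=0}^n ch_{n-i}(V) v^i$ together with the worked example preceding the lemma. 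It thus remains to establish (i) and (iv).

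For naturality, given a smooth map $f:N\to M$, the pullback bundle $f^*V \to N$ fits into a commutative square with the bundle map $\tilde f : f^*V \to V$ covering $f$, and the two zero sections are compatible with this square. Consequently $\tilde f^* U_V = U_{f^*V}$ on the cochain level. Choose an equivariant perturbation $U_V - \partial_\twist \tau \sim U'_V$ making $U_V$ transverse to the zero section of $V$; pulling back produces an equivariant cochain $\tilde f^* U'_V$ that is transverse to the zero section of $f^*V$, and the commutativity of zero section and pullback yields $f^*(i^* U'_V) = (i')^*(\tilde f^* U'_V)$ in $\HHep^*(N)$. Hence $Ch(f^*V) = f^* Ch(V)$.

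Axiom (iv) requires an explicit computation of $ch_1(L)$ for $L \to \C P^1$ the tautological line bundle. Following the model of the worked example, trivialize $L$ over an affine chart $U \cong \C$; then $L|_U \cong U \times \C$ with the standard scalar $S^1$-action on the fiber, and the local Thom class is represented by the zero section. Perturb it by the equivariant homotopy $H(t,v) = (v, ts(v))$ with $s$ a meromorphic section of $L$ having a single simple zero, producing a cylinder chain $\tau : U \times [0,1] \to L$ whose $\partial_\twist$-contribution is a correction living in the $v$-term. The resulting equivariant cochain is transverse to $i : \C P^1 \to L$, and pulling back by $i^*$ identifies the $v^0$-component of $Ch(L)$ with the algebraic intersection of a generic section of $L$ with the zero section, i.e.\ the Euler number of $L$, which equals $\pm 1$ on $[\C P^1]$.

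The main obstacle is axiom (iv): one must track how local trivialization data assemble into a global equivariant class, reconcile the corrections appearing in successive powers of $v$ with the geometric picture of a single global section, and fix sign/orientation conventions so that the outcome agrees with the standard generator of $H^2(\C P^1) = \mathbb{Z}$. Once (iv) is in place, Hirzebruch's uniqueness theorem forces $Ch(V)$ to coincide with the standard total Chern class for every complex vector bundle $V \to M$.
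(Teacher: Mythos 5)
Your overall strategy -- verify the Eilenberg--Hirzebruch axioms (naturality, Whitney sum, vanishing above the rank with $ch_0=1$, normalization) and invoke uniqueness -- is legitimate and genuinely different from the paper, which instead cites the splitting principle to reduce at once to line bundles. But your argument has a real gap exactly where you flag it: the normalization axiom. First, it is left unfinished rather than proved; the local-trivialization/meromorphic-section construction is only sketched, and the assembly of the local data into a global equivariant perturbation is precisely the nontrivial content. Second, and more seriously, concluding that $ch_1(L)$ of the tautological bundle $L\to\C P^1$ is ``$\pm1$ times the generator'' is not enough for the uniqueness theorem: the assignment $V\mapsto c(\bar V)$ (Chern classes of the conjugate bundle) satisfies naturality, the Whitney formula, $c_0=1$ and the rank bound, and differs from $c$ only in these signs. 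Uniqueness requires pinning down the actual generator, so the sign/orientation question you defer is not a loose end but the crux of axiom (iv).

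The shortest repair uses material already in the paper: it is observed there that the top component $ch_n(V)$ is the Euler class of $V$ (the pullback of the Thom class by the zero section, i.e.\ the transverse self-intersection of the zero section). For a complex line bundle, the Euler class of the underlying oriented $2$-plane bundle, with the orientation induced by the complex structure, is the classical $c_1$; applied to $L\to\C P^1$ this gives the correct generator, not merely a generator up to sign, and it replaces your chart-by-chart computation entirely. With that in hand your axiomatic route closes up, but note it then differs only superficially from the paper's: the paper's splitting-principle argument also silently needs naturality of $Ch$ under pullback (to compare $Ch(\pi^*V)$ with $\pi^*Ch(V)$ on the flag bundle) together with the Whitney lemma and the line-bundle identity $Ch(L)=c_1(L)+v$, so the two proofs consume the same inputs; yours routes them through the uniqueness theorem, the paper's through injectivity of $\pi^*$ for the flag bundle.
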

\begin{proof}
By the splitting principle \cite{BottTu}, it suffices to treat the case of a line bundle.  
\end{proof}
 
\subsection{Steenrod Operations and Equivariant Cohomology}
Let $V\rightarrow M$ be a real vector bundle. $V$ has a natural fiberwise $\mathbb{Z}_2$-action given by $$(m,v)\mapsto (m,-v)$$ 
As in the complex case, the Thom class $U$ is given by $i:M\rightarrow V$ where $i$ is the inclusion of $M$ as the zero section.  We view $U$ as a $\mathbb{Z}_2$-equivariant cohomology class in $\HHep^*(V; \mathbb{Z}_2)$.  In direct analogy with our construction of Chern classes, one may define the total Stiefel-Whitney class:
\begin{definition}
Let $SW(V)=i^*(U)\in H^*_+(M; \mathbb{Z}_2)$ be the total Stiefel-Whitney class.  
\end{definition}
The verification of the product formula is identical to the case of Chern classes as discussed above.  \\\\
We may also use equivariant cohomology to define Steenrod operations.  Consider the action of $\mathbb{Z}_2$ on  $M\times M$ that sends $(x,y)$ to $(y,x)$.  Let $$\Delta: M \rightarrow M\times M$$ be the inclusion of $M$ as the diagonal.  Given a cohomology class $\tau: Q\rightarrow M$, note that $\tau \times \tau $ defines an equivariant class in $H^*_+(M\times M; \mathbb{Z}_2)$.  
\begin{definition}

The total Steenrod square class is defined as $$Sq(\tau)=\Delta^*(\tau \times \tau)\in H^*_+(M; \mathbb{Z}_2)$$  

\end{definition}

The top degree part of $Sq(\tau)$ is just the cup product $\tau \cup \tau$.  The verification of the fundamental properties of the Steenrod operations, including the Cartan formula, is nearly identical to the case of Chern classes.  In addition, a neighborhood of the diagonal in $M\times M$ may be identified with the tangent bundle of $M$ in a manner that respects the $\mathbb{Z}_2$-action.  It follows that the total Stiefel-Whitney class of the tangent bundle coincides with the Steenrod square of the diagonal.

%\subsection{Products and $A_\infty$ Structure}
\section{Relation To Other Theories}
In this section we construct natural maps between $H_*$ and various forms of (co)homology found in the literature.  The fact that the natural maps induce isomorphisms follow from the standard Mayer-Vietoris arguments and thus we mostly restrict our attention to describing the relevant maps.  \\\\
The standard construction of singular homology $H^{sing}_*(M)$ is based on mappings $$\sigma: \Delta^n\rightarrow M$$ where $\Delta^n$ is an $n$-simplex.  If $M$ is smooth, one may restrict to smooth mappings (\cite{Bredon}).  By definition, each smooth $\sigma$ is a well defined chain in $M$.  Since the definition of the boundary maps are compatible, we have a well defined map $$H^{sing}_*(M)\rightarrow H_*(M)$$ 
There is a variant of singular theory that is based in mappings of cubes (see \cite{Serre}) $$\sigma: [0,1]^n\rightarrow M$$   To obtain the correct chain complex one must quotient out degenerate cubes.  By definition, these are mappings $\sigma$ that are independent of one of the coordinates.  Note that such cubes are also degenerate from our point of view.  Indeed, lets say that $$\sigma :[0,1]^n=[0,1]^{n-1}\times [0,1]\rightarrow M$$ is independent of the last coordinate.  It follows that $\sigma$ has small image.  In addition, $$\partial \sigma: \partial([0,1]^{n-1})\times [0,1]\sqcup [0,1]^{n-1} \times \partial [0,1]\rightarrow M$$ is a union of a chain with small image and a trivial chain.  If $H^{cube}_*(M)$ denotes cubical homology, we have defined a natural map $$H^{cube}_*(M)\rightarrow H_*(M)$$ 
Let $\Omega^*(M)$ be the chain complex of smooth differential forms on $M$.  Integration defines a map $$\Omega^*_{deRham}(M)\rightarrow C_*(M;\Ar)^*$$  To see that this map is well defined, we must check that a form $\omega$ integrates to zero on trivial chains and chains with small image.  Indeed, if $$\sigma:P\rightarrow M$$ has small image, $\sigma^*(\omega)$ vanishes since the rank of $\sigma$ is less than the dimension of $P$ at every point in the domain.  If $\sigma$ is trivial, let  $\phi:P\rightarrow P$ be an orientation reversing diffeomorphism that commutes with $\sigma$. We have $$\sigma^*(\omega)=\phi^*\circ \sigma^*(\omega)$$.  On the other hand, since $\phi$ reverses orientation, $$  \int_P \phi^*\circ \sigma^*(\omega)=-\int_P \sigma^*(\omega)$$  Therefore, $\int_P \sigma^*(\omega) =0$.  \\\\
Let us now discuss the equivariant situation.  Assume that $S^1$ acts smoothly on $M$.  Let $\xi$ denote the vector field induced by the infinitesimal action.  The Cartan construction gives us a deRham version of equivariant cohomology as follows.  Let $\Omega^*_{S^1}(M)$ be the complex of $S^1$-invariant forms.  Form a new complex $$(\Omega^*(M)\otimes \Ar[v],d_{S^1})$$ where $v$ has degree 2 and the differential acts as $$d_{S^1}(\omega)=d\omega+i_\xi \omega v$$  The resulting cohomology groups $H^{S^1, deRham}_*(M)$ are the Cartan model for the equivariant cohomology of $M$ \cite{Jones2}.   We define a chain map $$\Omega^*_{deRham}(M)\otimes \Ar[v]\rightarrow C^-_*(M;\Ar)^*$$ as follows.  Given a chain $\sum_{i=0}^N \sigma_i u^{-i} \in C_k^-(M)$ with $\sigma_i$ of dimension $k-2i$ and a cochain $$\sum_{i=0}^N \omega_i v^i\in \Omega^*_{deRham}(M)\otimes \Ar[v] $$ we have the evaluation map 
$$\sum_{i=0}^N \int_{P_i}\sigma_i^*(\omega_i)\in \Ar$$  To check that this defines a chain map, we use Stoke's theorem and the fact that $\int_{S^1\times P}\sigma^*(\omega)=\int_P\sigma^*(i_\xi \omega)$ as $\omega$ is $S^1$-invariant.

\section{Universal Coefficients}
The proof of the universal coefficient theorem for homology has a subtlety in our case.  As defined, it is not obvious that $C_*(M)$ is free.  However, we have:
\begin{lemma}
For any $M$, $C_*(M)$ is torsion-free.
\end{lemma}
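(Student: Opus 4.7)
The plan is to show that if $n[\sigma]=0$ in $C_*(M)$ for an integer $n\geq 1$, then $[\sigma]=0$.  Since addition in $C_*(M)$ is disjoint union, $n[\sigma]=[n\sigma]$ where $n\sigma$ denotes the $n$-fold disjoint union of $\sigma$ with itself, so we must prove that $n\sigma\in Q(M)$ implies $\sigma\in Q(M)$.  I would proceed by strong induction on $d=\dim\sigma$.  The base case $d=0$ is direct: a $0$-chain has small image only if it is empty, so $Q(M)$ in this dimension consists of trivial $0$-chains, which are precisely those with vanishing signed point-multiplicity at every point of $M$; since multiplicities of $n\sigma$ are $n$ times those of $\sigma$, if the former vanish, so do the latter.

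For the inductive step, suppose $n\sigma\cong \alpha\sqcup\beta$ with $\alpha$ trivial and $\beta$ degenerate.  Following the decomposition technique from Lemma~\ref{lem32}, partition $\sigma$ into its isomorphism classes of connected components up to orientation reversal, $\sigma=\sigma^{(1)}\sqcup\sigma^{(2)}\sqcup\cdots$, and correspondingly $n\sigma=n\sigma^{(1)}\sqcup n\sigma^{(2)}\sqcup\cdots$.  The given chain isomorphism preserves iso classes, so each class-$c$ component of $n\sigma$ ends up in either $\alpha$ or $\beta$.  Every component of $\beta$ has small image, so any iso class $c$ contributing to $\beta$ must already have small image (components in one class share their image in $M$).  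Conversely, if class $c$ does not have small image, all $n$ copies of $\sigma^{(c)}$ lie in $\alpha$.  Because each iso-class piece of a trivial chain is itself trivial, the characterization of triviality from Lemma~\ref{lem32} forces $\sigma^{(c)}$ to be trivial: either every class-$c$ component carries an orientation-reversing self-diffeomorphism (Case 1), in which case $\sigma^{(c)}$ is trivial directly, or the $+$ and $-$ component counts in the $\alpha$-portion, equal to $n$ times those of $\sigma^{(c)}$, must coincide (Case 2), and dividing by $n$ yields the same balance for $\sigma^{(c)}$.  Grouping classes of small image into $\beta'$ and the rest into $\alpha'$ gives $\sigma=\alpha'\sqcup\beta'$ with $\alpha'$ trivial and $\beta'$ of small image.

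It remains to show $\beta'$ is degenerate, i.e., that $\partial\beta'$ is isomorphic to a trivial chain plus a chain of small image.  Since $Q(M)$ is stable under $\partial$, we have $n\partial\sigma=\partial(n\sigma)\in Q(M)$, so the inductive hypothesis applied in dimension $d-1$ yields $\partial\sigma\in Q(M)$.  Writing $\partial\sigma=\partial\alpha'\sqcup\partial\beta'$ and noting $\partial\alpha'$ is trivial (hence in $Q(M)$), the lemma asserting that $Q(M)$ absorbs $Q(M)$-summands forces $\partial\beta'\in Q(M)$.  Therefore $\partial\beta'$ is trivial plus degenerate, and since degenerate chains have small image, $\partial\beta'$ is trivial plus small image, so $\beta'$ is degenerate and $\sigma\in Q(M)$.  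The main obstacle is the combinatorial step where divisibility of iso-class counts by $n$ must be converted into triviality of $\sigma^{(c)}$; this is exactly where torsion-freeness of $\mathbb{Z}$ enters, and one must be careful that the isomorphism-class partition of $\sigma$ interacts cleanly with the decomposition $\alpha\sqcup\beta$, beyond what the chain isomorphism literally provides.
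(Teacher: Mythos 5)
Your argument is correct and is essentially the paper's proof: the core step --- decomposing $\sigma$ into isomorphism classes of components as in Lemma \ref{lem32}, noting each class of $n\sigma$ is forced to be trivial or of small image, and dividing the signed component counts by $n$ --- is exactly what the paper does. The only difference is cosmetic: the paper avoids your induction on dimension by simply repeating the same decomposition argument once on $\partial\sigma$ (degeneracy only requires $\partial\sigma$ to be trivial plus small image, not itself in $Q(M)$), whereas you reach the same conclusion via the inductive hypothesis together with the absorption lemma.
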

\begin{proof}
Suppose there is $\sigma \in C_*(M)$ with $k\sigma \sim 0$ for some $k\in \mathbb{Z}^+$.  As in proof of lemma $\ref{lem32}$, we decompose $\sigma$ as $$\sigma=\sigma_1\sqcup \sigma_2 \dots $$ where each $\sigma_i$ consists of the union of isomorphic connected components (up to orientation) of $\sigma$ and no $\sigma_i$, $\sigma_j$ share isomorphic components when $i\neq j$. Since $k\sigma \sim 0$, we have that either $k\sigma_i$ is trivial or has small image for each $i$.  This implies that each $\sigma_i$ is either trivial or has small image.   We can ignore all the trivial components and assume that each $\sigma_i$ has small image and thus  $\sigma$ has small image.  By hypothesis, $k\partial \sigma \sim 0$.  By repeating the argument, it follows that $\partial \sigma$ is a union of a trivial chain and a chain with small image.  This implies that $\sigma \sim 0$ as it is degenerate. \end{proof}
By standard homological arguments (see \cite{Weibel}), $C_*(M)$ is thus a flat $\mathbb{Z}$-module.  Suppose $A$ is a $\mathbb{Z}$-module.  Let $H_*(M;A)$ denote the homology of $C_*(M)\otimes_{\mathbb{Z}} A$.  We may now use the standard homological arguments (see \cite{Weibel}) to prove the universal coefficients theorem:

\begin{theorem}
For any $\mathbb{Z}$ module $A$, we have the exact sequence $$0\rightarrow H_*(M; \mathbb{Z})\otimes A \rightarrow H_*(M;A)\rightarrow Tor^{\mathbb{Z}}(H_{*-1}(M; \mathbb{Z}),A)\rightarrow 0$$
\end{theorem}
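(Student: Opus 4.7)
The plan is to reduce the statement to the standard homological-algebra argument available for any chain complex of flat $\mathbb{Z}$-modules. This is legitimate because the preceding lemma shows $C_*(M)$ is torsion-free, and torsion-free abelian groups are flat over $\mathbb{Z}$. I would set $Z_n\subset C_n(M)$ for the cycles and $B_n\subset Z_n$ for the boundaries. Being subgroups of the torsion-free group $C_n(M)$, both $Z_n$ and $B_n$ are torsion-free, hence flat, so $\mathrm{Tor}^{\mathbb{Z}}(Z_n,-)=\mathrm{Tor}^{\mathbb{Z}}(B_n,-)=0$.

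Next I would observe that the short exact sequence
$$0\to Z_n\to C_n(M)\to B_{n-1}\to 0$$
remains exact after tensoring with $A$, since $\mathrm{Tor}^{\mathbb{Z}}(B_{n-1},A)=0$. Assembling over $n$ gives a short exact sequence of chain complexes
$$0\to (Z_*\otimes A,0)\to (C_*(M)\otimes A,\partial\otimes 1)\to (B_{*-1}\otimes A,0)\to 0,$$
where the two outer complexes carry the zero differential. Their homology is therefore simply $Z_*\otimes A$ and $B_{*-1}\otimes A$.

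I would then write down the associated long exact sequence in homology and identify the connecting map $B_{n-1}\otimes A\to Z_{n-1}\otimes A$ as the one induced by the inclusion $B_{n-1}\hookrightarrow Z_{n-1}$. Using the other standard short exact sequence $0\to B_n\to Z_n\to H_n(M;\mathbb{Z})\to 0$ and once more $\mathrm{Tor}^{\mathbb{Z}}(B_n,A)=0$, one gets
$$0\to \mathrm{Tor}^{\mathbb{Z}}(H_n(M;\mathbb{Z}),A)\to B_n\otimes A\to Z_n\otimes A\to H_n(M;\mathbb{Z})\otimes A\to 0.$$
Splicing this into the long exact sequence from the previous step yields exactly
$$0\to H_n(M;\mathbb{Z})\otimes A\to H_n(M;A)\to \mathrm{Tor}^{\mathbb{Z}}(H_{n-1}(M;\mathbb{Z}),A)\to 0.$$

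The only nonroutine step is knowing that $C_*(M)$ is torsion-free, which is exactly the content of the lemma immediately preceding the theorem; once that is in hand the rest is the standard argument, which is the reason the paper can cite Weibel rather than reprove it. Naturality in $A$ would follow from naturality of each step above, and if desired one can record that splittings exist (non-canonically) whenever each $B_{n-1}$ happens to be free, but this is not asserted in the statement so I would not pursue it.
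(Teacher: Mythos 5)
Your proposal is correct and follows essentially the same route as the paper: the paper's proof consists of invoking the preceding torsion-free lemma to conclude $C_*(M)$ is flat over $\mathbb{Z}$ and then citing the standard homological argument from Weibel, which is precisely the argument you have written out (flatness of cycles and boundaries, the short exact sequence of complexes, and the splicing of the long exact sequence). Your explicit treatment of the connecting map and your remark that splitting is not asserted are consistent with, and merely more detailed than, the paper's citation-based proof.
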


\section{Duality}
Our definition of cohomology groups is geometric and does not follow the standard route of dualizing the  homology chain complex.  Recall that we have defined a cap product $$H_a(M;  \mathbb{Z})\otimes H^{b}(M; \mathbb{Z})\rightarrow H_{a-b}(M;  \mathbb{Z})$$ by taking intersections of generic transverse representatives for our cycles.  Letting $b=a$ and taking coefficients in a field $\mathbb{K}$, we get a map:
$$D:H^a(M;\mathbb{K})\rightarrow Hom(H_a(M),\mathbb{K})$$
\begin{theorem}
The map $D$ is an isomorphism.   
\end{theorem}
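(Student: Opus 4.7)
The strategy is a Mayer-Vietoris induction on an open good cover of $M$. The plan is to verify that both $H^*(M; \mathbb{K})$ and $\mathrm{Hom}(H_*(M), \mathbb{K})$ fit into compatible Mayer-Vietoris sequences for open covers, to check the base case on a contractible open, and to apply the five lemma inductively.

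First, I would verify naturality of $D$ under an open inclusion $j: U \hookrightarrow M$: the pullback $j^*$ on cohomology and the transpose of the pushforward $j_*$ on homology intertwine under the intersection pairing, because a proper cochain supported in $U$ intersects a chain in $U$ with the same signed count whether the intersection is formed in $U$ or in $M$. Second, I would construct the two Mayer-Vietoris sequences. For the target, applying $\mathrm{Hom}(\cdot, \mathbb{K})$ (an exact functor on $\mathbb{K}$-vector spaces, with some care about coefficient change when $\mathbb{K}$ has positive characteristic) to the homological Mayer-Vietoris sequence derived from the excision theorem already proved gives a long exact sequence. For the source, one derives Mayer-Vietoris from the long exact sequence of a pair together with a cohomological analog of excision, proved by mimicking the homology argument via the cut-and-crease construction applied to proper cochains.

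Third, I would verify the base case $M = \mathrm{pt}$, where $H^0(\mathrm{pt}; \mathbb{K}) \cong \mathbb{K}$ is generated by the identity cochain and $H_0(\mathrm{pt}) \cong \mathbb{Z}$ is generated by the point. The intersection pairing sends this pair of generators to $1 \in \mathbb{K}$, and all higher degrees vanish on both sides (for cohomology, by the dual of the homology computation; alternatively by the contraction argument using the upward gradient flow on the dual Morse complex). By the homotopy axiom this extends to any contractible open. The five lemma applied to the resulting compatible Mayer-Vietoris sequences then proves the result inductively on the number of opens in a good cover of $M$; for $M$ without a finite good cover, a standard colimit argument reduces to the finite case, using that both $H^*$ and $\mathrm{Hom}(H_*(\cdot), \mathbb{K})$ are compatible with the relevant directed colimits.

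The main obstacle will be establishing the cohomological Mayer-Vietoris sequence and verifying that $D$ intertwines the connecting homomorphisms. The subtle point is that the connecting map on geometric cohomology, obtained by cutting a proper cochain along a hypersurface separating the complements of $U$ and $V$, must correspond under $D$ to the transpose of the connecting homomorphism on homology; tracing this correspondence requires a careful identification at the level of intersection counts, in the spirit of the argument used in the proof of homological excision above.
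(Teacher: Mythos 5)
Your proposal follows essentially the same route as the paper: a Mayer-Vietoris induction over a cover by convex (contractible) open sets, with the base case being the perfect pairing on a ball between the point cycle and the cochain given by the whole ball, assembled by the five lemma (and a colimit argument for infinite covers). The extra care you take with the cohomological Mayer-Vietoris sequence and the compatibility of connecting maps is a legitimate fleshing out of details the paper leaves implicit, not a different strategy.
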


\begin{proof}
This is a variant of the standard Mayer-Vietoris argument (see \cite{BottTu}).  One covers $M$ by convex balls and inducts on the number of balls in a cover.  The inductive step consists of recognizing that for a open ball in $\Ar^n$, we have a perfect paring between the cycle represented by a point and the cocyle represented by the entire open ball. 
\end{proof}
\noindent \textbf{Remark.} Our version of geometric duality does not assume $M$ is compact.  Now, assume that $M$ is a compact oriented manifold of dimension $n$.  We have an identification $$H_*(M; \mathbb{Z})\cong H^{n-*}(M; \mathbb{Z})$$ sending a cycle $\sigma$ of dimension $k$ to the cocyle represented by $\sigma$ of codimension $n-k$. Note that an oriented chain may be viewed as a cooriented cochain using the orientation of $M$.  From our point of view, this last isomorphism is basically a tautology since we do not define cohomology groups by dualizing homology.  Therefore, the real content of duality lies in the nondegeneracy of the intersection pairing.

%\section{Novikov Homology}
%\section{Partial Products and Homotopy Structures}

%\section{Orbifolds and Intersections}
\section{Spaces with a Functional}
\subsection{Basic Definitions}
So far, we have worked exclusively with smooth manifolds and smooth maps.  In a sense, this is not a great loss of generality since any finite CW complex has the homotopy type of a smooth manifold (possibly noncompact).  However, there are certain natural geometric constructions that are not possible if one restricts to smooth spaces.  For example, given a subset $S\subset M$ one may form the quotient $M/S$.  However, even if $S$ is a smooth submanifold, $M/S$ does not in general inherit a smooth structure.  For instance, the construction of cones and suspensions that is central to homotopy theory  leads to singular spaces.  One of the goals of this section is to provide a language to model these nonsmooth phenomena in the smooth category.  This leads us to the notion of a space with functional and morphisms between such spaces.  This formalism plays a crucial role in the construction of Floer theory in \cite{Lipyan} and we hope it is also illuminating in finite dimensions.   
\begin{definition}
A \textbf{space with a functional} $(M,f)$, is a pair where $M$ is a smooth finite dimensional manifold without boundary and $$f:M\rightarrow \Ar$$ is a continuous function.  
\end{definition}
The purpose of introducing $f$ is that it provides a way of characterizing the points in $M$ that approach $\pm\infty$.  We come to the definition of homology groups of $(M,f)$:
\begin{definition}
Let $P$ be a smooth oriented manifold with corners.  A smooth map $$\sigma:P\rightarrow M$$ is a chain if given $p_i\in P$, either $p_i$ has a converging subsequence or $$\lim_{i\rightarrow \infty} f(\sigma(p_i))=-\infty$$   
\end{definition}  
Intuitively, the only source of noncompactness of $\sigma$ comes from points going to $-\infty$.  We may now define trivial and degenerate chains as in our definition of geometric homology in section $\ref{homdef}$.  We let $H_*(M,f)$ denote the resulting groups.   They are graded by the dimension of the chains. \\\\
\textbf{Example 1.} Consider $(\Ar, f)$ for several different choices of $f$.  If $f=0$, we recover the usual homology of $\Ar$.  If $f(t)=t$ then in fact $H_*(\Ar, t)=0$.  Indeed, given a point $m\in \Ar$, we may form the homotopy $$(-\infty,0]\rightarrow \Ar$$
 mapping $t\in (-\infty, 0]$ to $m+t$.  The same homotopy shows that all 1-cycles are trivial as well. If we take $f(t)=-t^2$, we have $H_0(\Ar, -t^2)=0$ by a similar argument.  On the other hand, this time $H_1(\Ar, -t^2)=\mathbb{Z}$.  It is generated by the identity map $\Ar \rightarrow \Ar$.  This cycle is nontrivial since it pairs with a point to give 1.\\\\
\textbf{Example 2.} Given a vector bundle $V\rightarrow M$, fix a metric on $V$.  Let $f(m,v)=-|v|^2$. One may view $(V, f)$ as the smooth analogue of the Thom space construction.  Note that any fibre defines a homology class in $H_*(V, f)$, provided $V$ is oriented.  \\\\
\textbf{Example 3.}  Let $w$ be a closed $1$-form on $M$ and let $\widetilde{M}$ be a cover.  Suppose that the pullback of $w$ is an exact 1-form $df$.  Consider $(\widetilde{M}, f)$.  $H_*(\widetilde{M}, f)$ is the topological analogue of Novikov homology (see \cite{Farber}).  Indeed, in addition to compact cycles, we allow for countable sums of compact cycles as long as the images tend to $-\infty$.  \\\\
\textbf{Example 4.}  We say that $(M,f)$ is \textbf{pointed} if each component of $M$ contains a sequence that tends to $-\infty$.  For such spaces, $H_0(M,f)$ vanishes since a point in $M$ may be moved to $-\infty$ by a path $\gamma: (-\infty, 0] \rightarrow M$.  Given any manifold $M$, we may form a pointed space by removing a point $p_i$ from each component $M_i$ and taking $$f_i:M_i-p_i\rightarrow \Ar$$ to be a function that approaches $-\infty$ as points approach $p_i$.  For instance, starting from $S^1$ we  obtain $(\Ar, -t^2)$ by this procedure.  Given two pointed spaces $(M,f)$ and $(N,g)$ we have the product $(M\times N,f+g)$.  This is actually the smash product for pointed spaces. For example,  $(\Ar^n,-|x|^2)$  is the $n$th iterated product of $(\Ar, -t^2)$.  It is the pointed version of $S^n$.  \\\\
\textbf{Example 5.} Our formalism makes its possible to construct cones in the smooth category.  Let $S\subset M$ be a closed submanifold.  Take a function $$f:M\rightarrow [0,1]$$ that equals 1 in a tubular neighborhood $V$ of $S$ and vanishes outside a larger tubular neighborhood $U$. Let $I=(-\infty, -1)$.   Consider $(M\times I, f\cdot t)$ where $t$ is the coordinate on $\Ar$.  We decompose our space $M\times I$ as $(M-U)\times I$ and $U\times I$.  On $(M-U)\times I$,  $f\cdot t=0$ and thus, up to homotopy, we get $M-U$.  On $U\times I$, on the other hand, we claim that every cycle is nulhomotopic.  Indeed, given a cycle $P$, first deform it to lie in $V\times I$,  Now, observe that by  moving the $t$ coordinate to $-\infty$ we provide a homotopy from our cycle to the empty set.  \\\\
We also have the notion of a cochain:
\begin{definition}
Let $Q$ be a smooth manifold with corners.  A \textbf{cochain} $\tau:Q\rightarrow M$ is a smooth cooriented map such that given $q_i\in Q$, either $q_i$ has a converging subsequence or $\lim f(\tau(q_i))=\infty$.  
\end{definition}
We let $H^*(M,f)$ denote the resulting cohomology groups which are graded by codimension.  Note that we have pairing $$H^*(M, f)\otimes H_*(M,f)\rightarrow H_*(M)$$ since the intersections of a cycle and a cocycle must be compact. \\\\
The notion of cohomology of a  space with functional is a generalization of usual notion of cohomology.  Namely, given any smooth manifold $M$, let $f$ be any exhausting function on $M$ that is bounded below.  Thus, $m_i\in M$ either has a convergent subsequence or $\lim_{i \rightarrow \infty} f(m_i)=\infty$.  We associate to  $M$ the space $(M,f)$.  In this case, we have:
\begin{lemma}
We have $H_*(M,f)\cong H_*(M)$ and  $H^*(M,f)\cong H^*(M)$
\end{lemma}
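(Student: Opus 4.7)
The plan is to observe that under the hypotheses on $f$, the chain complexes defining $H_*(M,f)$ and $H^*(M,f)$ coincide on the nose with those defining $H_*(M)$ and $H^*(M)$ from section \ref{homdef}, so both isomorphisms reduce to tautologies once the properness conditions are unpacked correctly.

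First I would treat the homology statement. A chain for $(M,f)$ is a smooth $\sigma\colon P\to M$ such that every sequence $p_i\in P$ either admits a convergent subsequence or satisfies $f(\sigma(p_i))\to -\infty$. Since $f$ is bounded below by hypothesis, the second alternative never occurs, so every sequence in $P$ has a convergent subsequence. Because $P$ is a Hausdorff, second countable, hence metrizable manifold with corners, sequential compactness is equivalent to compactness, and thus $P$ is compact. Conversely, any compact $P$ trivially satisfies the sequential condition. Hence chains for $(M,f)$ are precisely chains in the sense of section \ref{homdef}. The definitions of \emph{trivial}, \emph{small image}, \emph{degenerate}, the disjoint-union addition, and the equivalence relation $\sim$ are literally the same in both setups (they involve only $\sigma$, not $f$), and the boundary operator is the geometric one in both cases. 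Therefore the chain complexes $C_*(M,f)$ and $C_*(M)$ are identical, giving $H_*(M,f)\cong H_*(M)$.

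For the cohomology statement, the key point is that the condition on a cochain $\tau\colon Q\to M$ in $(M,f)$, namely that every sequence $q_i\in Q$ either has a convergent subsequence or satisfies $f(\tau(q_i))\to \infty$, is equivalent to $\tau$ being a proper map. The forward direction: if $K\subset M$ is compact, then $f$ is bounded on $K$, so for any sequence $q_i\in \tau^{-1}(K)$ the values $f(\tau(q_i))$ stay bounded, ruling out the second alternative, whence $q_i$ has a convergent subsequence; this shows $\tau^{-1}(K)$ is sequentially compact, hence compact. Conversely, if $\tau$ is proper and $q_i$ has no convergent subsequence in $Q$, then any compact $K\subset M$ has only finitely many $\tau(q_i)$ inside it (else $\tau^{-1}(K)$ would contain an infinite discrete set), so $\tau(q_i)$ leaves every compact subset of $M$; since $f$ is exhausting, this forces $f(\tau(q_i))\to \infty$. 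Thus cochains for $(M,f)$ are exactly proper smooth oriented maps, which are the cochains of section 4. Again the notions of trivial, degenerate, boundary, and equivalence are defined verbatim the same way, so $C^*(M,f)$ coincides with $C^*(M)$, yielding $H^*(M,f)\cong H^*(M)$.

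There is essentially no hard step here; the entire content is recognizing that ``$f$ bounded below'' annihilates the noncompactness allowed in chains, and ``$f$ exhausting'' turns the noncompactness allowed in cochains into the usual properness condition. The only minor care point is verifying that manifolds with corners are metrizable, so that sequential compactness implies compactness for the domains $P$ of chains; this follows from Hausdorffness and second countability, which are built into the definition of a manifold with corners.
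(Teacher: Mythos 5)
Your proposal is correct and is exactly the argument the paper intends: the lemma is stated without proof precisely because, for $f$ bounded below and exhausting, the chain condition forces the domain to be (sequentially, hence by metrizability actually) compact and the cochain condition is equivalent to properness, so the complexes $C_*(M,f)$, $C^*(M,f)$ coincide with $C_*(M)$, $C^*(M)$ verbatim. The only step you gloss is that in showing $\tau^{-1}(K)$ is compact the subsequential limit actually lies in $\tau^{-1}(K)$ (immediate since $K$ is closed and $\tau$ continuous), which is harmless.
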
 
\textbf{Remark.} Note that if we take $f=0$ in the previous construction, we get cohomology with compact supports.

\subsection{Morphisms}
\begin{definition}
Let $U$ be open in $M$.  A morphism $\phi:(M,f)\rightarrow (N,g)$ is a smooth map $$\phi:U \rightarrow N$$ such that if $m_i\in U$ does not have a convergent subsequence in $U$, then $g(\phi(u_i))\rightarrow -\infty$.  
\end{definition}

\begin{lemma}
Let $\phi$ be a morphism.  Given a chain $\sigma:P\rightarrow M$, $$\phi\circ \sigma_{|\sigma^{-1}(U)}:\sigma^{-1}(U)\rightarrow N$$ is a chain.
\end{lemma}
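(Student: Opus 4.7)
The plan is to verify the two conditions in the definition of a chain for $(N,g)$. First, the domain $\sigma^{-1}(U)$ must be a smooth oriented manifold with corners and $\phi \circ \sigma|_{\sigma^{-1}(U)}$ must be smooth; second, any sequence $p_i \in \sigma^{-1}(U)$ without a convergent subsequence in $\sigma^{-1}(U)$ must satisfy $g(\phi(\sigma(p_i))) \to -\infty$.

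For the first, since $U \subset M$ is open and $\sigma$ is continuous, $\sigma^{-1}(U)$ is open in $P$ and hence inherits the structure of an oriented manifold with corners from $P$; smoothness of the composition is immediate from smoothness of $\phi$ and of $\sigma$.

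The content lies in the second condition. Given a sequence $p_i \in \sigma^{-1}(U)$ with no convergent subsequence in $\sigma^{-1}(U)$, set $m_i = \sigma(p_i) \in U$. I would split into two cases. Case (a): $p_i$ has no convergent subsequence in $P$. Then the chain hypothesis on $\sigma$ gives $f(m_i) \to -\infty$, so any hypothetical subsequential limit $m \in U$ would force $f(m_i) \to f(m)$ finite by continuity of $f$, a contradiction; hence $m_i$ has no convergent subsequence in $U$. Case (b): a subsequence of $p_i$ converges in $P$ to some $p$, necessarily with $\sigma(p) \notin U$ (otherwise we would have a convergent subsequence in $\sigma^{-1}(U)$). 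Along this subsequence continuity gives $m_i \to \sigma(p) \notin U$, so no further subsequence converges in $U$. In both cases $m_i$ has no convergent subsequence in $U$, and the morphism hypothesis on $\phi$ then yields $g(\phi(m_i)) \to -\infty$ as required.

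The only mildly subtle step is case (b): recognizing that convergence of $p_i$ in the ambient $P$ to a point outside $\sigma^{-1}(U)$ is precisely the failure of $m_i$ to have a convergent subsequence in $U$. This uses openness of $U$ together with continuity of $\sigma$, and it is the one place the conclusion genuinely depends on the morphism being defined only on an open subset of $M$ rather than on all of $M$. Everything else is bookkeeping against the definitions.
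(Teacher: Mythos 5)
Your overall strategy is the same as the paper's: reduce to showing that $m_i=\sigma(p_i)$ has no convergent subsequence in $U$, then invoke the morphism condition for $\phi$, with the chain condition for $\sigma$ and continuity of $f$ supplying the key facts. The preliminary points (openness of $\sigma^{-1}(U)$, smoothness of the composite) are fine. However, the case analysis as written has a quantifier gap. You split on the behavior of the \emph{whole} sequence $p_i$, but the statement you need is about \emph{every} subsequence of $m_i$. In case (b) you only show that the particular subsequence of $p_i$ that converges in $P$ has its image converging to a point outside $U$ (hence no sub-subsequence of \emph{it} converges in $U$). This does not rule out the remaining scenario: a different subsequence $m_{i_j}\to y\in U$ whose preimage points $p_{i_j}$ do not converge in $P$ at all. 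That scenario is not covered by your case (a) either, since case (a) assumes the full sequence $p_i$ has no convergent subsequence in $P$, which fails in case (b). To exclude it you must apply the chain condition for $\sigma$ to the subsequence $p_{i_j}$ itself: since $f(m_{i_j})\to f(y)$ is finite, $f\circ\sigma$ does not tend to $-\infty$ along $p_{i_j}$, so $p_{i_j}$ must have a convergent subsequence in $P$, whose limit maps to $y\in U$ and hence lies in $\sigma^{-1}(U)$, contradicting the hypothesis on $p_i$.

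The repair is exactly the paper's structure: instead of a dichotomy on the full sequence, assume for contradiction that \emph{some} subsequence $\sigma(p_j)$ converges to $y\in U$, and run both of your observations (finiteness of $f(y)$ forces a convergent subsequence in $P$; continuity of $\sigma$ places its limit in $\sigma^{-1}(U)$) on that arbitrary subsequence. All the ingredients are already in your write-up; only the logical bookkeeping of ``for every subsequence'' versus ``for the sequence'' needs to be reorganized.
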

\begin{proof}
Suppose $p_i\in \sigma^{-1}(U)$ is a sequence without a convergent subsequence.  We claim that $\sigma(p_i)$ does not have a convergent subsequence in $U$.  Indeed, if some subsequence $\sigma(p_j)$ had a limit $y\in U$, then $p_j$ must have a convergent subsequence $p_k \in P$.  But this implies that  $\lim p_k \in  \sigma^{-1}(U)$, contradicting the assumption on $p_i$.  Therefore, since $\sigma(p_i)$ does not have a convergent subsequence in $U$, $g(\phi(\sigma(p_i))) \rightarrow -\infty$  as desired.    
\end{proof}
\noindent \textbf{Example 1.}  Let $S\subset M$ be a compact submanifold.  Let $$f:M-S\rightarrow \Ar$$ be a function that is bounded outside a neighborhood of $S$ and approaches $-\infty$ as a point approaches $S$.  We define the collapsed space to be $(M-S, f)$.   The morphism $$\phi: M\rightarrow M-S$$   defined by the identity map on $M-S$ is our smooth analogue of the collapsing map.  \\\\
An important special case that is central to homotopy theory is the collapsing map $$S^n\rightarrow S^n \vee S^n$$ that is obtained by collapsing an equatorial $S^{n-1}$.  Here we view $S^n \vee S^n$ as a disjoint union of two open hemispheres of the original $S^n$.

%\section{Homotopy and Cohomotopy}
% Suspension theorem, thom spectrum, thom theorem, spectra?

\end{document}